\documentclass[11pt]{article} 
\usepackage{latexsym, amsfonts, amssymb, amsmath, amsthm, mathtools}
\usepackage{graphicx}
\usepackage[colorlinks=true,
            linkcolor=blue,
            filecolor=magenta,
            urlcolor=blue,
            citecolor=magenta,
            pdftitle={Global existence and asymptotic behavior in chemotaxis models with signal-dependent sensitivity and logistic-type source},
            pdfauthor={Le Chen, Ian Ruau, Wenxian Shen},
            pdfkeywords={global existence, boundedness, persistence, chemotaxis, signal-dependent sensitivity, logistic source, stability},
            pdfpagemode=FullScreen,
            breaklinks=true,
           ]{hyperref}
\usepackage[left=1in, right=1in, top=1in, bottom=1in]{geometry}
\usepackage[usenames,dvipsnames]{color}
\usepackage{ulem}

\usepackage{pgfplots}
\usepackage{tikz}
\usetikzlibrary{shapes.geometric, positioning, arrows, intersections, patterns, angles, quotes}


\newcommand{\Norm}[1]{\left\| #1 \right\|}
\newcommand{\norm}[1]{||#1||}


\usepackage{amsrefs}

\BibSpec{article}{%
    +{}  {\PrintAuthors}                {author}
    +{,} { \textit}                     {title}
    +{.} { }                            {part}
    +{:} { \textit}                     {subtitle}
    +{,} { \PrintContributions}         {contribution}
    +{.} { \PrintPartials}              {partial}
    +{,} { }                            {journal}
    +{}  { \textbf}                     {volume}
    +{}  { \PrintDatePV}                {date}
    +{,} { \issuetext}                  {number}
    +{,} { \eprintpages}                {pages}
    +{,} { }                            {status}
    +{,} { \url}                        {url}
    +{,} { \PrintDOI}                   {doi}
    +{,} { available at \eprint}        {eprint}
    +{}  { \parenthesize}               {language}
    +{}  { \PrintTranslation}           {translation}
    +{;} { \PrintReprint}               {reprint}
    +{.} { }                            {note}
    +{.} {}                             {transition}
    +{}  {\SentenceSpace \PrintReviews} {review}
}

\newtheorem{theorem}{Theorem}[section] 

\newtheorem{corollary}[theorem]{Corollary}

\newtheorem{definition}{Definition}[section]

\newtheorem{lemma}{Lemma}[section]

\newtheorem{proposition}{Proposition}[section]

\theoremstyle{remark}
\newtheorem{remark}{Remark}[section]
\theoremstyle{remark}

\numberwithin{equation}{section}
\numberwithin{figure}{section}
\numberwithin{table}{section}

\def\p{\partial}

\def\R{\mathbb R}


\begin{document}

\title{Chemotaxis models with signal-dependent sensitivity and a logistic-type
source, I: Boundedness and global existence}

\author{%
\begin{tabular}{ccc}
  Le Chen                  & Ian Ruau                 & Wenxian Shen \\
  \url{lzc0090@auburn.edu} & \url{iir0001@auburn.edu} & \url{wenxish@auburn.edu}
\end{tabular} \bigskip \\
Department of Mathematics and Statistics\\
Auburn University, Auburn, AL 36849, USA}
\date{}
\maketitle

\begin{abstract}
  This series of papers is concerned with the boundedness, global existence, and
  asymptotic behavior of positive classical solutions to the following
  chemotaxis model with signal-dependent sensitivity and a logistic-type source:
  \begin{equation}\label{E:main-abstract-eq}\tag{CM}
  \begin{dcases}
    u_t=\Delta u- \chi_0\nabla \cdot\left(\frac{u^m}{(1+v)^\beta}\nabla v\right)+au-b u^{1+\alpha}, & x \in \Omega, \\
    0=\Delta v-\mu v+\nu u^\gamma,                                                                  & x \in \Omega, \\
    \frac{\partial u}{\partial n}=\frac{\partial v}{\partial n}=0,                                  & x \in \partial\Omega.
  \end{dcases}
  \end{equation}
  Here, $\Omega$ is a bounded smooth domain in $\mathbb{R}^N$. The function $u$
  denotes the population density of the biological species, and $v$ denotes the
  concentration of the chemical agent. The parameters $\alpha, \gamma, m, \mu,
  \nu$ are positive, $\chi_0$ is a real number, and $a,b, \beta$ are
  nonnegative. In the present part, we investigate the boundedness and global
  existence of positive classical solutions of \eqref{E:main-abstract-eq} from
  three different perspectives: the role played by the negative chemotaxis
  sensitivity coefficient $\chi_0$, the role of the nonlinear cross diffusion
  rate $\frac{u^m}{(1+v)^\beta}$, and the role of the logistic-type source
  $u(a-b u^\alpha)$. We prove that all positive classical solutions
  of~\eqref{E:main-abstract-eq} stay bounded provided that the chemotaxis
  sensitivity $\chi_0$ is negative, or the nonlinear cross diffusion rate
  $\frac{u^m}{(1+v)^\beta}$ is relatively weak, or the logistic source
  $u(a-bu^\alpha)$ is relatively strong. We also prove that when $m\ge 1$, the
  boundedness of a positive classical solution implies its global existence. In
  part II, we will study the asymptotic behavior of globally defined positive
  classical solutions, including uniform persistence of globally defined
  positive solutions and stability of positive constant equilibria.

  Note that the decay of the sensitivity function $\chi(v) \coloneqq
  \frac{\chi_0}{(1+v)^\beta}$ ($\beta>0$) as $v \to \infty$ has a damping effect
  on the ability of the biological species to move towards the signal when its
  concentration is high. However, the signal-dependent sensitivity complicates
  the analysis of boundedness and global existence
  for~\eqref{E:main-abstract-eq}. This paper develops novel ideas and techniques
  for the study of boundedness and global existence in
  \eqref{E:main-abstract-eq}, some of which can also be applied to other
  chemotaxis models. The results established in this paper provide partial
  evidence that the signal-dependent sensitivity $\chi(v) =
  \frac{\chi_0}{(1+v)^\beta}$ with large $\beta$ has less of an effect on the
  evolution of the population density of the biological species. For example, we
	  prove that if $\beta > \max\{1,\frac{1}{2}+ \frac{\chi_0}{4} \max\{2,\gamma
	  N\}\}$ and $m = 1$, then any positive classical solution
	  of~\eqref{E:main-abstract-eq} exists globally. Several existing results for
	  some special cases of \eqref{E:main-abstract-eq} are recovered.

\end{abstract}

\medskip
\medskip

\noindent \textbf{Keywords}. Global existence, boundedness, Keller--Segel
systems, parabolic--elliptic chemotaxis models, signal-dependent sensitivity,
{logistic-type} source. 

\medskip

\noindent \textbf{AMS Subject Classification (2020)}:
35K45, 35M31, 35Q92, 92C17, 92D25

{\hypersetup{linkcolor=black}
\tableofcontents
}

\section{Introduction}

\subsection{Overview}

In the current series of papers, we study the boundedness, global existence,
and asymptotic behavior of classical solutions to chemotaxis models of the
following form on a bounded, connected, and smooth domain $\Omega \subset
\mathbb{R}^N$:
\begin{align}\label{E:main-eq0}
\begin{dcases}
  u_t = \Delta u - \nabla \cdot (u^m \chi(v) \nabla v) + au - b u^{1+\alpha}, & x \in \Omega,         \\
  \tau v_t = \Delta v - \mu v + \nu u^\gamma,                                 & x \in \Omega,         \\
  \frac{\partial u}{\partial n} = \frac{\partial v}{\partial n} = 0,          & x \in \partial\Omega.
\end{dcases}
\end{align}
This is a parabolic-elliptic system when $\tau = 0$, and a parabolic-parabolic
system when $\tau > 0$. Here, the function $\chi(v)$, known in the literature
as the \textit{chemotactic sensitivity function}, is given by
\begin{equation}\label{E:chi-function-eq}
  \chi(v) = \frac{\chi_0}{(c + v)^\beta}.
\end{equation}
In equations~\eqref{E:main-eq0} and~\eqref{E:chi-function-eq}, $m, \alpha,
\gamma, \mu$, and $\nu$ are positive constants, $\chi_0$ is a real number, and
$a$, $b$, $c$, $\beta$, and $\tau$ are nonnegative constants.

System~\eqref{E:main-eq0} describes the evolution of a biological species
influenced by a chemical substance or signal produced by the species itself at
the rate $\nu u^\gamma$. In this context, $u(t,x)$ represents the population
density of the biological species, and $v(t,x)$ represents the density of the
chemical substance or signal. When $\beta = 0$, $\chi(v)$ simplifies to the
constant chemotactic sensitivity function $\chi(v) \equiv \chi_0$. When $c = 0$
and $\beta = 1$, $\chi(v) = \chi_0/v$, which reflects a singular behavior near
$v = 0$, and an inhibition of chemotactic migration at high signal
concentrations. This type of sensitivity was first proposed
in~\cite{keller.segel:71:traveling} based on the \textit{Weber--Fechner law} of
stimulus perception. When $c>0$ and $\beta>0$, the sensitivity function
$\chi(v)$ takes into account the partial loss of the signal substance through
its binding (see~\cite{viglialoro.woolley:18:boundedness, baghaei:24:global,
black.lankeit.ea:20:stabilization, winkler:10:absence}). The coefficient
$\chi_0>0$ indicates that the signal is a \textit{chemoattractant} signal, while
$\chi_0<0$ indicates that the signal is \textit{chemorepellent}. The reaction
term $a u - b u^{1+\alpha}$ governs the local dynamics of the biological species
and is known as a \textit{logistic source} when both $a$ and $b > 0$.
System~\eqref{E:main-eq0} with $a = b = 0$ is referred to as a \textit{minimal
chemotaxis model}. The parameter $\tau\ (\ge 0)$ is associated with the
diffusion rate of the chemical substance. When $\tau = 0$, it corresponds to the
case in which the chemical substance diffuses very rapidly.
System~\eqref{E:main-eq0} with $\tau = 0$ (resp. $\tau > 0$) is referred to as a
\textit{parabolic-elliptic} (resp. \textit{parabolic-parabolic}) chemotaxis
system. When $\beta > 0$, System~\eqref{E:main-eq0} is known as a chemotaxis
system with signal-dependent sensitivity. In the literature, a chemotaxis system
is also referred to as a \textit{Keller-Segel system}, named after the
pioneering works of E. F. Keller and L. A.
Segel~\cite{keller.segel:70:initiation, keller.segel:71:model,
keller.segel:71:traveling}.

Central problems in studying~\eqref{E:main-eq0} include determining whether
solutions with given initial conditions exist globally and understanding the
asymptotic behavior of globally defined bounded solutions of~\eqref{E:main-eq0}.
Numerous studies have addressed these issues for various special cases. For
example, the works~\cite{herrero.medina.ea:97:finite-time,
herrero.velazquez:96:singularity, nagai:01:blowup, nagai.senba:98:global,
tello.winkler:07:chemotaxis} studied~\eqref{E:main-eq0} with $\chi(v)~\equiv
\chi_0$ and $m = \alpha = \gamma = 1$, which is given by:
\begin{equation}\label{E:special-eq1}
\begin{dcases}
  u_t = \Delta u - \chi_0 \nabla \cdot (u \nabla v) + au - b u^2,    & x \in \Omega, \\
  \tau v_t = \Delta v - \mu v + \nu u,                               & x \in \Omega, \\
  \frac{\partial u}{\partial n} = \frac{\partial v}{\partial n} = 0, & x \in \partial \Omega.
\end{dcases}
\end{equation}
In this case, if the system is minimal, i.e., $a = b = 0$, it is known that when
$N \ge 2$, finite-time blow-up of positive solutions occurs under certain
conditions on the mass and the moment of the initial data (see
\cite{herrero.medina.ea:97:finite-time, herrero.velazquez:96:singularity,
nagai:01:blowup, nagai.senba:98:global}, etc.). If $a, b > 0$, it is known that
finite-time blow-up does not occur if $N \le 2$, or if $N > 3$ and $b$ is large
relative to $\chi_0$ (see~\cite{issa.shen:17:dynamics, issa.shen:20:pointwise,
tello.winkler:07:chemotaxis, winkler:10:boundedness}, etc.).  Hence, the
finite-time blow-up phenomenon in~\eqref{E:special-eq1} is suppressed to some
extent by the logistic source. However, it remains an open question whether
solutions of~\eqref{E:special-eq1} are always global.

There are several studies of~\eqref{E:main-eq0} in the case that $\chi(v) \equiv
\chi_0$, $\beta = 0$, $\tau = 0$, and $m$, $\alpha$, $\gamma > 0$, which is
given by:
\begin{equation}\label{E:special-eq3}
  \begin{dcases}
    u_t = \Delta u - \chi_0 \nabla \cdot \left(u^m \nabla v\right) + au - b u^{1+\alpha}, & x \in \Omega, \\
    0 = \Delta v - v + u^\gamma,                                                          & x \in \Omega, \\
    \frac{\partial u}{\partial n} = \frac{\partial v}{\partial n} = 0,                    & x \in \partial \Omega
  \end{dcases}
\end{equation}
(see \cite{galakhov.salieva.ea:16:on, hong.tian.ea:20:attraction-repulsion,
hu.tao:17:boundedness, zheng:24:on, xiang:19:dynamics}, etc.). For example, the
work~\cite{galakhov.salieva.ea:16:on} studied~\eqref{E:special-eq3} with
$\chi_0>0$, $m, \gamma \ge 1$, $\alpha > 0$, $a = b > 0$, and $\mu = \nu = 1$.
It is proved in~\cite{galakhov.salieva.ea:16:on} that if
\begin{subequations}\label{E:Cond-GalSal}
  \begin{equation}\label{E:Cond-GalSal-1}
    \alpha > m + \gamma - 1 \quad \text{or}
  \end{equation}
  \begin{equation}\label{E:Cond-GalSal-2}
    \alpha = m + \gamma - 1 \quad \text{and} \quad b > \frac{N\alpha - 2}{2(m - 1) + N\alpha} \chi_0,
  \end{equation}
\end{subequations}
then nonnegative classical solutions exist globally. Moreover, if
\begin{equation}\label{E:Cond-GalSal-3}
  \alpha \ge m + \gamma - 1 \quad \text{and} \quad b > 2\chi_0,
\end{equation}
then every globally defined positive classical solution converges to the unique
positive equilibrium solution $(u,v) \equiv (1,1)$. Note that the condition
\eqref{E:Cond-GalSal-2} with $m = \alpha = \gamma = 1$ becomes
\[
  b>\frac{N-2}{N}\chi_0,
\]
which is obtained in~\cite{tello.winkler:07:chemotaxis}.

The works~\cite{biler:99:global, black:20:global, ding.wang.ea:19:global,
fujie:15:boundedness, fujie.senba:16:global, fujie.winkler.ea:14:blow-up,
kurt.shen:20:finite-time, kurt.shen:23:chemotaxis, kurt.shen.ea:24:stability,
nagai.senba:98:global, winkler:11:global, zhao.zheng:18:global} investigated
system~\eqref{E:main-eq0} with singular sensitivity $\chi(v) = \chi_0/v$ and $m
= \alpha = \gamma = 1$, which is given by:
\begin{equation}\label{E:special-eq2}
  \begin{dcases}
    u_t = \Delta u - \chi_0 \nabla \cdot \left(\frac{u}{v} \nabla v\right) + au - b u^2, & x \in \Omega, \\
    \tau v_t = \Delta v - \mu v + \nu u,                                                 & x \in \Omega, \\
    \frac{\partial u}{\partial n} = \frac{\partial v}{\partial n} = 0,                   & x \in \partial \Omega.
  \end{dcases}
\end{equation}
It is known that when $N = 2$, finite-time blow-up does not occur
for~\eqref{E:special-eq2} (see~\cite{fujie.senba:16:global,
fujie.winkler.ea:14:blow-up}). Note that when $a = b = 0$ and $N
\ge 3$, finite-time blow-up may occur for~\eqref{E:special-eq2} (see
\cite{nagai.senba:98:global}). When $a, b > 0$ and $N \ge 3$, positive solutions
of the parabolic-elliptic system exist globally under the following biologically
meaningful conditions: $a$ is large relative to $\chi_0$ and the initial
distribution of $u$ is not small (see~\cite{kurt.shen:20:finite-time}). In
general, it remains an open question whether finite-time blow-up does not occur
when $a, b > 0$ and $N \ge 3$. See also
\cite{le.kurt:25:global, kurt:25:boundedness, zhang.mu.ea:25:global,
zhao:23:boundedness, zhao:24:global, zhao.xiao:24:global} for the study
of~\eqref{E:main-eq0} in the case when $c=0$, $\tau = 0$, $m=1$, and $\alpha$,
$\beta$, $\gamma > 0$.

Among others, the works~\cite{black.lankeit.ea:20:stabilization,
mizukami.yokota:17:unified, viglialoro.woolley:18:boundedness,
winkler:10:absence} studied system~\eqref{E:main-eq0} with $\chi(v) =
\frac{\chi_0}{(1+v)^\beta}$ ($\beta>0$) and $m = \alpha = \gamma = 1$, which is
given by
\begin{equation}\label{E:special-eq4}
  \begin{dcases}
    u_t=\Delta u- \chi_0\nabla \cdot\left(\frac{u}{(1+v)^\beta}\nabla v\right)+au-b u^{2}, & x \in \Omega, \\
    \tau v_t=\Delta v-\mu v+\nu u,                                                         & x \in \Omega, \\
    \frac{\partial u}{\partial n}=\frac{\partial v}{\partial n}=0,                         & x \in \partial\Omega.
  \end{dcases}
\end{equation}
Note that the sensitivity function $\chi(v) = \frac{\chi_0}{(1+v)^\beta}$ with
$\beta>0$ stays bounded for all $v\ge 0$ and that the decay of $\chi(v)$ as $v
\to \infty$ has a damping effect on the ability of the biological species to
move towards the signal when its concentration is high. Intuitively, one expects
that positive classical solutions of \eqref{E:special-eq4} exist globally under
some conditions which are weaker than those for the global existence of positive
classical solutions of \eqref{E:special-eq1} (resp. \eqref{E:special-eq2}).
Moreover, one might expect that such sensitivity could prevent finite-time
blow-up in~\eqref{E:main-eq0}. The works
\cite{black.lankeit.ea:20:stabilization, mizukami.yokota:17:unified,
winkler:10:absence} show that positive classical solutions of~\eqref{E:special-eq4}
exist globally in the case that $\tau = 1$, $\beta > 1$, and $a = b = 0$, which
implies that the nonlinear sensitivity $\chi(v) = \frac{\chi_0}{(1+v)^\beta}$
with $\beta>1$ prevents finite-time blow-up in system~\eqref{E:special-eq4} when
it is of the parabolic-parabolic type. Fewer studies have focused on the global
existence of classical solutions of system~\eqref{E:special-eq4} when it is of
the parabolic-elliptic type.

The objective of this series of papers is to investigate the boundedness, global
existence, and asymptotic behavior of positive classical solutions
of~\eqref{E:main-eq0} with $c = 1$ (as given in~\eqref{E:chi-function-eq}), and
$\tau = 0$ (as specified in~\eqref{E:main-eq0}), that is,
\begin{equation}\label{E:main-PE}
\begin{dcases}
  u_t=\Delta u- \chi_0\nabla \cdot\left(\frac{u^m}{(1+v)^\beta}\nabla v\right)+au-b u^{1+\alpha}, & x \in \Omega,         \\
  0=\Delta v-\mu v+\nu u^\gamma,                                                                  & x \in \Omega,         \\
  \frac{\partial u}{\partial n}=\frac{\partial v}{\partial n}=0,                                  & x \in \partial\Omega,
\end{dcases}
\end{equation}
 where the parameters of our system~\eqref{E:main-PE} satisfy the following
 conditions:
\begin{align}\label{E:parameters}
  \alpha, \gamma, m, \mu, \nu > 0 , \quad
  a, b, \beta \ge 0, \quad\text{and} \quad
  \chi_0 \in \R.
\end{align}
In this part, we focus on the boundedness and global existence of positive
classical solutions of~\eqref{E:main-PE}. We investigate the boundedness and
global existence of positive classical solutions of~\eqref{E:main-PE} from three
different angles: one from the angle of the role played by negative chemotaxis
sensitivity coefficient $\chi_0$, one from the angle of the role played by the
nonlinear cross diffusion rate $\frac{u^m}{(1+v)^\beta}$, and one from the angle
of the role played by the logistic-type source $u(a-b u^\alpha)$. In part II, we
will study the asymptotic behavior of globally defined positive classical
solutions of~\eqref{E:main-PE}.

\smallskip

This paper has several features not present in the existing literature. For
example, most studies in the literature on the global existence of classical
solutions of~\eqref{E:main-PE} (even with $\beta = 0$) focus on the case where
$m \ge 1$ and/or $\gamma \ge 1$. In this paper, we allow $m < 1$ and $\gamma <
1$. Note that $m < 1$ may induce some singularity at $u = 0$.  As mentioned
above, the decay of the sensitivity function $\chi(v) \coloneqq
\frac{\chi_0}{(1+v)^\beta}$ ($\beta>0$) as $v \to \infty$ has a damping effect
on the ability of the biological species to move towards the signal when its
concentration is high. However, the signal-dependent sensitivity complicates the
analysis of global existence and asymptotic dynamics for~\eqref{E:main-PE}. We
develop novel ideas and techniques for the study of boundedness and global
existence in~\eqref{E:main-PE}, some of which can also be applied to other
chemotaxis models (see subsection~\ref{SS:remarks} for some difficulties that
arise in the study of~\eqref{E:main-PE} and for some novel ideas and techniques
developed in this paper).

\subsection{Definitions and notation}\label{SS:Definition}

In this subsection, we present some notation to be used throughout the paper,
introduce the definition of classical solutions of~\eqref{E:main-PE} and present
a proposition on the local existence of classical solutions.

\smallskip

For a given function $u \in C(\overline\Omega)$, we may use $\|u\|_\infty$ for
$\|u\|_{C(\overline\Omega)} = \sup_{x\in\Omega}|u(x)|$.  For any $p \ge 1$ and
$u\in L^p(\Omega)$, we use $\Norm{u}_p$ to denote the $L^p$-norm of $u$, i.e.,
$\Norm{u}_p \coloneqq \left(\int_\Omega |u(x)|^p dx\right)^{1/p}$. For given
$\theta_1, \theta_2 \in (0,1)$ and an interval $I\subset\mathbb{R}$, denote
the \textit{H\"older space}
\begin{equation*}
  C^{\theta_1,\theta_2}\left(I\times\overline\Omega\right)
  \coloneqq \left\{u\in C(I\times\overline\Omega)\, \big|\, \Norm{u}_{C^{\theta_1,\theta_2}(I\times\overline\Omega)}<\infty \right\},
\end{equation*}
where the \textit{H\"older norm} is defined by
\begin{equation*}
  \Norm{u}_{C^{\theta_1,\theta_2}\left(I\times\overline\Omega\right)}
  \coloneqq \mathop{\sup_{(t,x)\in I\times\overline \Omega}}|u(t,x)|+ \mathop{\sup_{(t_1,x_1),(t_2,x_2)\in I\times\overline \Omega}}_{(t_1,x_1) \not = (t_2,x_2)}
  \frac{|u(t_1,x_1)-u(t_2,x_2)|}{|t_1-t_2|^{\theta_1}+|x_1-x_2|^{\theta_2}}.
\end{equation*}
We use the standard notation $C^{k,\ell}\left(I \times \overline{\Omega}
\right)$, where $k, \ell \in \{0,1,2,\dots\}$, to denote the space of functions
possessing $k$ continuous derivatives in time and $\ell$ continuous derivatives
in space.

Throughout the paper, we assume that $\Omega \subset \mathbb{R}^N$ is a bounded
smooth domain. The number $C$ denotes a generic constant which is independent of
the solutions of~\eqref{E:main-PE}, but may depend on the parameters under
consideration and may be different at different places.

\begin{definition}\label{D:classical-sol}
  \begin{itemize}

\item[(1)] A function $(u(t,x),v(t,x))$ is called a {\rm classical solution} of
  the parabolic-elliptic system~\eqref{E:main-PE} on the time interval $(0,T)$
  if
  \begin{gather*}
    u(\cdot,\cdot) \in C^{1,2}\left( (0,T) \times \Omega\right)\cap C^{0,1}\left( (0,T) \times \overline{\Omega}\right), \\
    v(\cdot,\cdot) \in C^{0,2}\left( (0,T) \times \Omega\right)\cap C^{0,1}\left( (0,T) \times \overline{\Omega}\right),
  \end{gather*}
  and $(u(t,x),v(t,x))$ satisfies~\eqref{E:main-PE} for all $t\in (0,T)$.

\item[(2)] A given function $(u(t,x), v(t,x))$ is said to be a {\rm global
  classical solution} of~\eqref{E:main-PE} if it is a classical solution
  of~\eqref{E:main-PE} on $(0, \infty)$. A global classical solution is said to
  be {\rm positive} (resp. {\rm bounded}) if $\inf_{x\in\Omega}u(t,x) > 0$ for
  any $t \in (0, \infty)$ (resp.~${\limsup_{t\to\infty}
  \|u(t,\cdot)\|_\infty<\infty}$).

\end{itemize}
\end{definition}

By standard arguments, it can be proved that for any $u_0$
satisfying~\eqref{E:initial-cond-PE} below, there exists~${T_{\max}(u_0) \in (0,
\infty]}$ such that~\eqref{E:main-PE} has a unique classical solution, denoted
by $\left(u(t,x;u_0), v(t,x;u_0)\right)$, with the initial condition $u(0,x;u_0)
= u_0(x)$ on the interval $(0, T_{\max}(u_0))$. To be more precise, we have the
following proposition:

\begin{proposition}[Local existence]\label{P:local-existence}
  For any given $u_0$ satisfying
  \begin{equation}\label{E:initial-cond-PE}
    u_0\in C(\overline{\Omega}) \quad \text{and}\quad  \inf_{x\in \Omega}u_0(x) > 0,
  \end{equation}
  there is $T_{\max}(u_0)\in (0,\infty]$ such that the parabolic-elliptic
  system~\eqref{E:main-PE} admits a unique classical solution $(u(t,x;u_0),
  v(t,x;u_0))$ on $(0, T_{\max}(u_0))$ satisfying that
  \begin{align}\label{E:local-PE-eq0}
    \lim_{t\to 0_+}\Norm{u(t,\cdot;u_0)-u_0(\cdot)}_{\infty} = 0 \quad \text{and} \quad
    u(t,x) > 0, \quad \text{for all $(t,x)\in [0, T_{\max}(u_0))\times\overline\Omega$,}
  \end{align}
  and
  \begin{equation}\label{E:local-PE-eq1}
    u(\cdot,\cdot;u_0) \in C^{1,2} \left( (0, T_{\max}(u_0))\times \overline{\Omega}\right), \quad
    v(\cdot,\cdot;u_0) \in C^{0,2} \left( (0, T_{\max}(u_0))\times \overline{\Omega}\right).
  \end{equation}
  Moreover, if $T_{\max}(u_0) < \infty$, then either
  \begin{equation}\label{E:local-Alternative}
    \limsup_{t \nearrow T_{\max}(u_0)} \sup_{x\in \Omega} u(t, x;u_0) = \infty \quad {\rm or}\quad
    \liminf_{t \nearrow T_{\max}(u_0)}\inf_{x\in\Omega} u(t,x;u_0) = 0.
  \end{equation}
  If $T_{\max}(u_0)<\infty$ and $m\ge 1$, then
  \begin{equation}\label{E:local-infty}
    \limsup_{t \nearrow T_{\max}(u_0)}  \sup_{x\in \Omega} u(t, x;u_0) = \infty.
  \end{equation}
\end{proposition}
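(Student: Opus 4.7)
The plan is to reduce \eqref{E:main-PE} to a quasilinear parabolic equation for $u$ alone and apply a Banach fixed-point argument. For any nonnegative $u \in C(\overline{\Omega})$, standard elliptic theory with Neumann boundary data provides a unique classical solution $v = v[u]$ of $0 = \Delta v - \mu v + \nu u^\gamma$, with $v \geq 0$ by the maximum principle and $\|v[u]\|_{W^{2,p}(\Omega)} \leq C_p \|u^\gamma\|_{L^p(\Omega)}$ for every $p \in (1,\infty)$; in particular $v$ and $\nabla v$ are controlled by $\|u\|_\infty$. Fix $u_0$ satisfying \eqref{E:initial-cond-PE}, set $\delta_0 := \inf u_0 > 0$ and $M_0 := \|u_0\|_\infty$, and for small $T > 0$ consider the closed subset
\[
X_T := \bigl\{ u \in C([0,T] \times \overline{\Omega}) \,:\, \tfrac{\delta_0}{2} \leq u \leq 2M_0, \ u(0, \cdot) = u_0 \bigr\},
\]
on which I would define $\Phi(u) = \tilde{u}$, where $\tilde{u}$ solves the \emph{linear} inhomogeneous parabolic problem $\tilde{u}_t = \Delta \tilde{u} - \chi_0 \nabla \cdot \bigl( u^m (1+v[u])^{-\beta} \nabla v[u] \bigr) + a u - b u^{1+\alpha}$ with homogeneous Neumann data and initial value $u_0$. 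The constraint $u \geq \delta_0/2 > 0$ bypasses the potential singularity of $u^m$ at $u=0$ when $m<1$. Parabolic $L^p$ and Schauder theory, together with elliptic regularity for $v[u]$, give $\tilde{u} \in C^{1,2}((0,T] \times \overline{\Omega})$ with controlled norms; for $T$ small enough, $\Phi$ maps $X_T$ into itself and is a contraction, and its unique fixed point is the desired classical solution. Bootstrapping yields the regularity in \eqref{E:local-PE-eq1} and the continuity in \eqref{E:local-PE-eq0}.

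Next I would extend the solution up to a maximal time $T_{\max}(u_0)$ by reapplying the local construction. Writing the $u$-equation with $\Delta v$ eliminated via the elliptic equation and using the weak maximum principle with the bounded coefficients $v, \nabla v$ produces a lower bound of the form $u(t,\cdot) \geq u(s,\cdot) e^{-K(t-s)}$ valid on any subinterval on which $\|u\|_\infty$ and $1/\inf u$ stay bounded. If neither alternative in \eqref{E:local-Alternative} held, both $\sup u$ and $1/\inf u$ would remain finite on $[0, T_{\max})$; the resulting uniform bounds would make the quasilinear equation uniformly parabolic with Hölder-controlled coefficients, so Schauder estimates would extend the solution past $T_{\max}$, contradicting maximality.

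For the improvement \eqref{E:local-infty} when $m \geq 1$, suppose $\limsup_{t \nearrow T_{\max}} \|u(t,\cdot)\|_\infty = M < \infty$. Then $\|v[u(t)]\|_{W^{2,p}(\Omega)}$ is bounded for every $p$, so $v$ and $\nabla v$ are uniformly bounded on $[0, T_{\max}) \times \overline{\Omega}$. Expanding
\[
-\chi_0 \nabla \cdot \left( \frac{u^m}{(1+v)^\beta} \nabla v \right) = -\chi_0 m u^{m-1}(1+v)^{-\beta} \nabla u \cdot \nabla v - \chi_0 u^m (1+v)^{-\beta}(\mu v - \nu u^\gamma) + \chi_0 \beta u^m (1+v)^{-\beta - 1} |\nabla v|^2,
\]
and noting that $u^{m-1}$ is bounded on $\{u \leq M\}$ because $m-1 \geq 0$, the equation for $u$ takes the form $u_t - \Delta u - \mathbf{A}(t,x) \cdot \nabla u = F(t,x,u)$ with bounded $\mathbf{A}$ and $F(t,x,u) \geq -K u$ for a constant $K$ depending on $M$. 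Comparison with the ODE $\underline{U}' = -K\underline{U}$ then yields $u(t,x) \geq \delta_0 e^{-Kt}$ on $[0, T_{\max})$, eliminating the second alternative in \eqref{E:local-Alternative} and forcing \eqref{E:local-infty}.

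The principal obstacle is the singularity of $u \mapsto u^m$ at $u = 0$ when $m < 1$: both the fixed-point setup and the continuation argument rely on a strictly positive lower bound for $u$, so $\liminf_{t \to T_{\max}} \inf u = 0$ is a genuine failure mode that cannot in general be excluded in that regime. The hypothesis $m \geq 1$ is precisely what removes this singularity from the chemotactic flux and enables the ODE-based comparison that rules out the second alternative.
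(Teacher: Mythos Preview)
Your proposal is correct and follows essentially the same strategy as the paper: a Banach fixed-point argument in $C([0,T];C(\overline\Omega))$ on a set where $u$ is bounded above and below by positive constants, extension to a maximal time with the dichotomy~\eqref{E:local-Alternative} coming from the uniformity of the local existence time, and for $m\ge 1$ a comparison with an ODE subsolution to rule out the second alternative. The only cosmetic differences are that the paper sets up the fixed point via the mild (semigroup) formulation $u(t)=e^{-At}u_0-\chi_0\int_0^t e^{-A(t-s)}\nabla\cdot(\ldots)\,ds+\ldots$ rather than your freeze-coefficients linear PDE map, and uses a nonlinear ODE subsolution $\underline u'=-C_1\underline u^m-C_2\underline u^{m+\gamma}+a\underline u-b\underline u^{1+\alpha}$ (which stays positive on finite intervals precisely because $m\ge 1$ makes the right-hand side Lipschitz at $0$) in place of your linearized $\underline U'=-K\underline U$ obtained from $u^m\le M^{m-1}u$.
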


We will provide an outline of the proof of Proposition~\ref{P:local-existence}
in Section~\ref{SS:Local-Exist}. \medskip

Note that by regularity for
elliptic equations (see Theorem~12.1 of~\cite{amann:84:existence}), for any $1 <
p < \infty$, there is $C_{N,p} > 0$ such that
\begin{equation}\label{E:main-inequality-eq-4-0}
  \int_{\Omega}|D^2 v|^p\le C_{N,p}\int_\Omega \Big(|\Delta v|^p+v^p\Big)\quad \text{for all $v\in W^{2,p}(\Omega)$}.
\end{equation}
Define
\begin{equation}\label{E:C-star}
  C_{N,p}^* \coloneqq \inf\bigg\{C_{N,p}>0\,\big|\, \text{$C_{N, p}$ satisfies \eqref{E:main-inequality-eq-4-0}}\bigg\}.
\end{equation}
Then~\eqref{E:main-inequality-eq-4-0} holds with $C_{p, N}$ being replaced by
$C_{N, p}^*$. Moreover, by the Riesz-Thorin Interpolation Theorem (see Theorem
6.27 in~\cite{folland:99:real}), $C_{N, p}^*$ is bounded for $p$ in any compact
interval of $(1, \infty)$. We will need the following constants:
\begin{equation}\label{E:M-star}
  M^*(N,p,\mu,\nu) \coloneqq \nu^{p}\left[
    \frac{8^{p}}{p}\,C_{N,p}^*\!\left(2^{p}+\frac{1}{\mu^{p}}\right)
  + \frac{2^{2p}}{(p-1)\,p^{p}} \right],
\end{equation}
\begin{equation}\label{E:K}
  K(N, \alpha, \gamma, \mu, \nu) \coloneqq \liminf_{q\to q_*,\, q>q_*}
  \left[M^*\!\left(N,\, \frac{q+\alpha}{\gamma},\, \mu,\, \nu\right)\right]^{\!\frac{\gamma}{q+\alpha}},
  \quad \text{with $q_* \coloneqq \max\left\{1,\frac{N\alpha}{2}\right\}$,}
\end{equation}
and
\begin{align}\label{E:Psi-Theta}
  \Psi_\beta \coloneqq \left(\frac{\beta}{1+\beta}\right)^{1+\beta}
  \quad \text{and} \quad
  \Theta_\beta \coloneqq \beta^\beta\, (1+\beta)^{-(1+\beta)}.
\end{align}
Observe that (see, also, Fig.~\ref{fig:Psi-Theta}):
\[
  \Psi_0=0,\quad  \lim_{\beta\to\infty} \Psi_\beta=e^{-1},
  \quad \text{and} \quad
  \lim_{\beta \to 0_+} \Theta_\beta = 1,\quad \lim_{\beta\to\infty} \Theta_\beta=0.
\]

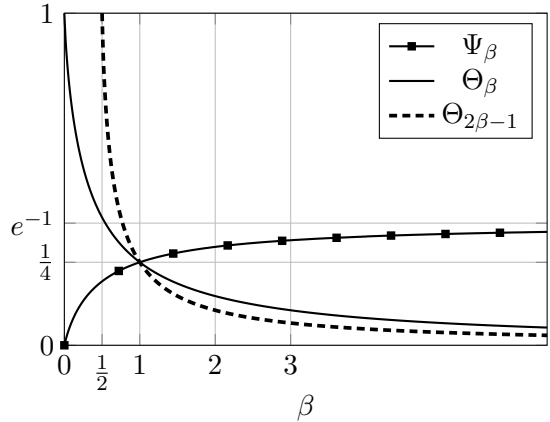
\begin{figure}[htpb]
  \centering
  \begin{tikzpicture}[baseline=(current bounding box.center)]
  \begin{axis}[
    width=8cm, height=6cm,
    xlabel={$\beta$},
    domain=0:6.4, samples=400,
    grid=both, legend pos=north east,
    enlargelimits=false, clip=false,
    ytick={0, 1/4, exp(-1), 1},
    yticklabels={$0$, $\tfrac{1}{4}$, $e^{-1}$, $1$},
    xtick={0, 1/2, 1, 2, 3},
    xticklabels={$0$, $\tfrac{1}{2}$, $1$, $2$, $3$},
    every axis plot/.append style={mark=none, smooth, black, thick}
  ]

  \addplot[mark=square*, mark size=1.2pt, mark repeat = 45] {(x/(1+x))^(1+x)};
  \addlegendentry{$\Psi_\beta$}

  \addplot[solid] { (x^x)/((1+x)^(1+x)) };
  \addlegendentry{$\Theta_\beta$}

  \addplot[densely dashed, domain=0.5:6.4, line width=1.4pt] { ((2*x - 1)^(2*x - 1)) / ((2*x)^(2*x)) };
  \addlegendentry{$\Theta_{2\beta-1}$}

  \end{axis}
  \end{tikzpicture}

  \caption{The constants $\Psi_\beta$, $\Theta_\beta$, and $\Theta_{2\beta-1}$
  viewed as functions of $\beta$.}

  \label{fig:Psi-Theta}
\end{figure}

Throughout this paper, we assume that $u_0$ satisfies~\eqref{E:initial-cond-PE}.
We may drop $t,x,u_0$ in $u(t,x;u_0)$ and $v(t,x; u_0)$, and drop $u_0$ in
$T_{\max}(u_0)$ if no confusion occurs. \medskip

\subsection{Statements of the main results}\label{SS:Main}

In this subsection, we state our main results. In the following,
$(u(t,x;u_0),v(t,x;u_0))$ always denotes the solutions of the parabolic-elliptic
system~\eqref{E:main-PE} starting with initial function $u_0$ that
satisfies~\eqref{E:initial-cond-PE}.

Our first theorem addresses the boundedness and global existence of solutions
of~\eqref{E:main-PE} with $\chi_0 \le 0$, which will be proved in
Section~\ref{S:negative-sensitivity}.

\begin{theorem}[Boundedness and global existence with negative chemotaxis
  sensitivity]\label{T:negative-sensitivity}

  Assume $\chi_0\le 0$.
  \begin{itemize}

    \item[(1)] If $a, b > 0$, then
      \begin{align}\label{E:bounded-max-eq1}
        \Norm{u(t,\cdot;u_0)}_\infty
        \le \max\left\{\Norm{u_0}_\infty,\: \left(\frac{a}{b}\right)^{1/\alpha} \right\}
        \qquad \text{for all $t \in (0,T_{\max}(u_0))$.}
      \end{align}
      Moreover, if $m \ge 1$, then $T_{\max}(u_0) = \infty$.

    \item[(2)] If $a = b = 0$, then
      \begin{equation}\label{E:bounded-max-eq0}
        \limsup_{t\to T_{\max}-} \Norm{u(t,\cdot;u_0)}_\infty
        \le \Norm{u_0}_\infty \qquad \text{for all $t \in (0,T_{\max}(u_0))$.}
      \end{equation}
      Moreover, if $m \ge 1$, then $T_{\max}(u_0) = \infty$.
  \end{itemize}
\end{theorem}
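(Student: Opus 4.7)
The core idea is a pointwise maximum principle argument that exploits the sign condition $\chi_0 \le 0$. Expanding the divergence in the first equation of \eqref{E:main-PE} and substituting $\Delta v = \mu v - \nu u^\gamma$ from the elliptic equation, the $u$-equation rewrites as
\begin{equation*}
u_t = \Delta u - \frac{\chi_0 m u^{m-1}}{(1+v)^\beta}\nabla u\cdot\nabla v + \frac{\chi_0\beta u^m}{(1+v)^{\beta+1}}|\nabla v|^2 - \frac{\chi_0 u^m}{(1+v)^\beta}\bigl(\mu v - \nu u^\gamma\bigr) + au - bu^{1+\alpha}.
\end{equation*}
A preliminary and independent step is to apply the elliptic maximum principle (with homogeneous Neumann data) to the second equation of \eqref{E:main-PE} to obtain the pointwise bound $v(t,x) \le \frac{\nu}{\mu}\|u(t,\cdot)\|_\infty^\gamma$: at a maximum of $v$, $\Delta v \le 0$, so $\mu v \le \nu u^\gamma$. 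This is the only information about $v$ that the rest of the argument needs.

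My plan for the $L^\infty$ bound on $u$ is to compare $u$ with the spatially constant supersolution $\bar u(t)$ defined by the scalar ODE $\bar u' = a\bar u - b\bar u^{1+\alpha}$ with $\bar u(0)=\max\{\|u_0\|_\infty,(a/b)^{1/\alpha}\}$ in case (1), and with $a=b=0$, $\bar u(0)=\|u_0\|_\infty$ in case (2); in both cases $\bar u$ is non-increasing. Setting $w := u - \bar u$, I would argue that $w$ cannot attain a positive interior maximum at any $t \in (0,T_{\max}(u_0))$. At such a hypothetical maximum point $x_0$ at time $t_0$, $\nabla u = 0$ and $\Delta u \le 0$, so the gradient term in the rewritten equation vanishes, the Laplacian is non-positive, and the $|\nabla v|^2$-term is non-positive since $\chi_0\le 0$ and $\beta\ge 0$. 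Furthermore, the elliptic bound on $v$ applied at $x_0$ gives $\mu v(t_0,x_0) - \nu u(t_0,x_0)^\gamma \le 0$, whence $-\chi_0 u^m(\mu v - \nu u^\gamma)/(1+v)^\beta\le 0$ at $x_0$ as well. Thus $u_t(t_0,x_0)\le au(t_0,x_0)-bu(t_0,x_0)^{1+\alpha}$, while $\bar u'(t_0) = a\bar u(t_0) - b\bar u(t_0)^{1+\alpha}$. Because the map $s\mapsto as - bs^{1+\alpha}$ is non-increasing on $[(a/b)^{1/\alpha},\infty)$ (trivially so when $a=b=0$) and $u(t_0,x_0) \ge \bar u(t_0) \ge (a/b)^{1/\alpha}$, this gives $w_t(t_0,x_0)\le 0$, contradicting $w$ attaining a positive maximum there. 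Hence $u \le \bar u$ on the whole time interval, yielding \eqref{E:bounded-max-eq1} and \eqref{E:bounded-max-eq0}.

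The boundary case $x_0\in\partial\Omega$ is the main technical nuisance, since $\partial u/\partial n = 0$ does not by itself guarantee $\Delta u\le 0$ at a boundary maximum. I would handle this in one of the standard ways: either perturb by a barrier of the form $u + \epsilon e^{\lambda t}\phi(x)$ with $\phi \in C^2(\overline\Omega)$, $\phi > 0$, $\partial_n\phi > 0$, and $\lambda$ large enough so that the perturbed function cannot attain its maximum on $\partial\Omega$, and then send $\epsilon \to 0$; or appeal to Hopf's lemma to rule out a strict boundary maximum directly. Once the pointwise comparison $u\le \bar u$ is established, the global existence conclusion when $m \ge 1$ is immediate from Proposition~\ref{P:local-existence}: under $m\ge 1$, only the blow-up alternative \eqref{E:local-infty} can cause $T_{\max}(u_0)<\infty$, and this has just been ruled out. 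Apart from the boundary technicality, which is routine, the argument is a direct application of the elliptic and parabolic maximum principles plus a one-line ODE comparison.
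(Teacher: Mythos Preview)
Your approach is essentially the same as the paper's: both exploit the expanded form of the $u$-equation together with the elliptic bound $\mu v \le \nu\|u\|_\infty^\gamma$ and the sign $\chi_0\le 0$ to run a parabolic maximum-principle comparison. The paper tracks $\bar u(t)=\max_x u(t,x)$ directly and proves it is nonincreasing (its Lemma~\ref{L:nonincreasing}), whereas you compare with an explicit ODE supersolution; these are two packagings of the same argument, and the boundary issue you flag is handled in the paper exactly as you suggest, by invoking the Hopf-type results of Friedman.

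There is one soft spot. The step ``$w_t(t_0,x_0)\le 0$, contradicting $w$ attaining a positive maximum there'' is not a contradiction as written: at a parabolic maximum one only has $w_t\ge 0$, so you get $w_t=0$. In case~(1) this is harmless because $s\mapsto as-bs^{1+\alpha}$ is \emph{strictly} decreasing on $[(a/b)^{1/\alpha},\infty)$, so in fact $w_t<0$. In case~(2) ($a=b=0$) the reaction term vanishes and your inequalities are all non-strict, so you need either the standard $-\epsilon t$ perturbation (at a positive maximum of $w-\epsilon t$ one has $u_t\ge\epsilon$, while the equation gives $u_t\le 0$), or the paper's route: split on whether $\bar v(t_0)=\tfrac{\nu}{\mu}\bar u(t_0)^\gamma$, use mass conservation to handle the constant-$u$ case, and use the strong elliptic maximum principle to get $\mu v(t_0,x_0)-\nu u^\gamma(t_0,x_0)<0$ strictly otherwise. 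With either fix, your argument is complete.
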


Our second theorem is on the boundedness and global existence of solutions of
\eqref{E:main-PE} with weak nonlinear cross diffusion in the sense that $0 < m
\le 1$ and $\beta \ge 1$, which will be proved in
Section~\ref{S:weak-cross-diffusion}.

\begin{theorem}[Boundedness and global existence with weak nonlinear cross diffusion]
  \label{T:weak-cross-diffusion}

  Assume that $a, b \ge 0$ and $\beta \ge 1$.
  \begin{itemize}

    \item[(1)] If $0<m<1$, then
      \begin{align}\label{E:bounded-finite}
        \limsup_{t\to T_{\max}-} \Norm{u(t,\cdot;u_0)}_\infty<\infty.
      \end{align}

    \item[(2)] If $m=1$ and $\chi_0 < \dfrac{2(2\beta-1)}{\max\{2, \gamma N\}}$,
      then~\eqref{E:bounded-finite} holds and $T_{\max} (u_0) = \infty$.

  \end{itemize}
\end{theorem}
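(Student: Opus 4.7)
The plan is to derive a uniform-in-time $L^p$ bound for $u$ on the maximal existence interval $[0,T_{\max}(u_0))$ for a suitable exponent $p$, and then upgrade it to the $L^\infty$ bound \eqref{E:bounded-finite} by a standard Alikakos/Moser iteration on the $u$-equation (after feeding the $L^p$ bound into elliptic $W^{2,p/\gamma}$-regularity for $v$). For part~(2), once $\|u(t,\cdot)\|_\infty$ is bounded on $[0,T_{\max}(u_0))$, the hypothesis $m=1\ge 1$ forces $T_{\max}(u_0)=\infty$ through the alternative~\eqref{E:local-infty} of Proposition~\ref{P:local-existence}.

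The core step is the $L^p$ energy estimate. I would multiply the first equation of~\eqref{E:main-PE} by $pu^{p-1}$, integrate by parts, and split the chemotactic flux by Young's inequality into an absorbable fraction of the dissipation $\int_\Omega u^{p-2}|\nabla u|^2$ plus a remainder controlled by $\int_\Omega u^{p+2m-2}|\nabla v|^2/(1+v)^{2\beta}$. The decisive move is to control this last integral by testing the elliptic equation $-\Delta v+\mu v=\nu u^\gamma$ against the weighted multiplier $u^{p+2m-2}/(1+v)^{2\beta-1}$; integration by parts (using the Neumann boundary condition) together with differentiation of the weight produces exactly a factor $2\beta-1$ in front of $\int_\Omega u^{p+2m-2}|\nabla v|^2/(1+v)^{2\beta}$ on the left, yielding the identity
\[
(2\beta-1)\int_\Omega \frac{u^{p+2m-2}|\nabla v|^2}{(1+v)^{2\beta}}
= (p+2m-2)\int_\Omega \frac{u^{p+2m-3}\,\nabla u\cdot\nabla v}{(1+v)^{2\beta-1}}
-\nu\int_\Omega \frac{u^{p+2m-2+\gamma}}{(1+v)^{2\beta-1}}
+\mu\int_\Omega \frac{u^{p+2m-2}\,v}{(1+v)^{2\beta-1}}.
\]
A second Young's inequality on the $\nabla u\cdot\nabla v$ term, combined with the pointwise bounds $(1+v)^{-(2\beta-2)}\le 1$ and $v\,(1+v)^{-(2\beta-1)}\le 1$ (both valid precisely because $\beta\ge 1$), then dominates the $|\nabla v|^2$ integral by a constant times $p^2/(2\beta-1)^2\int_\Omega u^{p+2m-4}|\nabla u|^2$ plus lower-order integrals in $u$, with an extra helpful absorber $-C'\int_\Omega u^{p+2m-2+\gamma}/(1+v)^{2\beta-1}$ on the right-hand side.

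Feeding this back into the $L^p$ differential inequality, in case~(2) with $m=1$ the coefficient ending up in front of $\int_\Omega u^{p-2}|\nabla u|^2$ on the right is proportional to $\chi_0^2 p^2/(2\beta-1)^2$, so absorption into the dissipation requires $p<(2\beta-1)/|\chi_0|$. Combining this with the lower bound $p>\max\{1,\gamma N/2\}$ needed to deduce $v\in L^\infty$ from $u\in L^p$ via elliptic regularity on the $v$-equation and Sobolev embedding (the launching pad for the Moser iteration), the two constraints admit a common value of $p$ exactly when $\chi_0<2(2\beta-1)/\max\{2,\gamma N\}$, which is the hypothesis of part~(2). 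For part~(1) the exponent $p+2m-2$ is strictly less than $p$, so Young's inequality can be closed for every $p>1$ with no size restriction on $\chi_0$, by interpolating the bad terms against $\int u^p$, the available $L^1$- or logistic-type information, and the negative absorber $-\int u^{p+2m-2+\gamma}/(1+v)^{2\beta-1}$ extracted above.

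The main obstacle I anticipate is the sharp quantitative bookkeeping in the paired Young's inequalities of part~(2), so that the stated threshold $\chi_0<2(2\beta-1)/\max\{2,\gamma N\}$ is actually reached (rather than some strictly weaker sufficient condition) and so that the admissible window of $p$ really extends up to the Sobolev value $\gamma N/2$ required to launch the bootstrap. A secondary technical point for part~(1) is that the exponent $p+2m-4$ appearing in the absorbed dissipation becomes negative for small $p$, so one must either restrict to sufficiently large $p$ from the outset or use an approximation argument to handle the possible degeneracy as $u\to 0_+$ before passing to the limit.
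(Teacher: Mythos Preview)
Your treatment of part~(2) is essentially the paper's own argument: the same multiplier $u^{p}/(1+v)^{2\beta-1}$ against the elliptic equation, the same self-referential Young's split using $(1+v)^{-(4\beta-2)}\le(1+v)^{-2\beta}$ for $\beta\ge 1$, and the same threshold $p<(2\beta-1)/\chi_0$, followed by a bootstrap from $p_0>\max\{1,\gamma N/2\}$ to $L^\infty$. The paper does the bootstrap via a Gagliardo--Nirenberg differential inequality (its Corollary~\ref{C:bootstrap}) rather than a Moser iteration, but either route is standard.

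For part~(1), however, there is a genuine gap. Your single pass through the identity leaves the term $\int_\Omega u^{p+2m-4}|\nabla u|^2$ on the right-hand side, and this is \emph{not} absorbable into the dissipation $-(p-1)\int_\Omega u^{p-2}|\nabla u|^2$ when $m<1$: pointwise $u^{p+2m-4}|\nabla u|^2=u^{2(m-1)}\cdot u^{p-2}|\nabla u|^2$, and the factor $u^{2(m-1)}$ is unbounded where $u$ is small. No interpolation against $\int u^p$, the $L^1$ bound, or the negative absorber $-\int u^{p+2m-2+\gamma}/(1+v)^{2\beta-1}$ can cure this, since none of those carry a $|\nabla u|^2$. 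Your diagnosis that the issue is merely ``$p+2m-4$ possibly negative'' misses the point: even when that exponent is positive, the two gradient integrals have different weights and do not compare.

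The paper's fix is to use a \emph{different} Young's split at the reabsorption step, namely
\[
(p+2m-2)\int_\Omega \frac{u^{p+2m-3}\,\nabla u\cdot\nabla v}{(1+v)^{2\beta-1}}
\;\le\;\frac{p-1}{n+1}\int_\Omega u^{p-2}|\nabla u|^2
\;+\;C\int_\Omega u^{p+4(m-1)}\frac{|\nabla v|^2}{(1+v)^{2\beta}},
\]
which keeps the \emph{correct} dissipation weight $u^{p-2}$ but pushes the $u$-exponent in the residual $|\nabla v|^2$-integral down from $p+2(m-1)$ to $p+4(m-1)$. Iterating this $n$ times drives the exponent to $p+2^n(m-1)$; choosing $p=2^n(1-m)$ makes it zero, at which point $\int_\Omega|\nabla v|^2/(1+v)^{2\beta}$ is controlled directly by Proposition~\ref{P:Main1} with no $u$-factor left. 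This recursion (accumulating $n$ copies of $\frac{p-1}{n+1}\int u^{p-2}|\nabla u|^2$, which still leaves a spare $\frac{p-1}{n+1}$ of dissipation) is the missing idea in your outline for part~(1).
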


Our third theorem is on the boundedness and global existence of solutions
of~\eqref{E:main-PE} with relatively strong logistic source, which will be
proved in Section~\ref{S:strong-logistic-source}.

\begin{theorem}[Boundedness and global existence with relatively strong logistic source]
\label{T:strong-logistic-source}

  Assume that $a, b > 0$.
  \begin{itemize}

    \item[(1)] (Boundedness) Assume that $m > 0$.
      Property~\eqref{E:bounded-finite} holds given one of the following
      conditions:

      \begin{itemize}

        \item[(i)] $\beta \ge 0$, $\alpha > m + \gamma - 1$;

        \item[(ii)] $\beta\ge 1/2$, and $\alpha > 2m + \gamma - 2$;

        \item[(iii)] $\beta\ge 0$, $\alpha = m + \gamma - 1$, and
          \begin{align}\label{E:cond-chi-eq1}
            \chi_0 < \frac{\big((N\alpha-2)_+ + 2m\big) b}{(N\alpha-2)_+ \left(\nu + {\Psi_\beta}\, K \right) },
          \end{align}
          where the right-hand side is understood as $+\infty$ when
          $(N\alpha-2)_+ = 0$, $K = K(N,\alpha,\gamma,\mu,\nu)$ and
          $\Psi_\beta$ are defined in~\eqref{E:K} and~\eqref{E:Psi-Theta},
          respectively;

        \item[(iv)] $\beta\ge 1/2$, $\alpha = 2m + \gamma - 2$, and
          \begin{equation}\label{E:cond-chi-eq2}
            \chi_0 < \left( \frac{8\,b}{(N\alpha-2)_+   \Theta_{2\beta-1}\, K}\right)^{1/2},
          \end{equation}
          where the right-hand side is understood as $+\infty$ when
          $(N\alpha-2)_+ = 0$, $K = K(N,\alpha,\gamma,\mu,\nu)$ and
          $\Theta_\beta$ are defined in~\eqref{E:K} and~\eqref{E:Psi-Theta},
          respectively;

      \end{itemize}

    \item[(2)] (Global existence). If $m \ge 1$, then $T_{\max}(u_0) = \infty$
      in any of the above cases.

  \end{itemize}

\end{theorem}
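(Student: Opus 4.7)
Since Theorem~\ref{T:negative-sensitivity} already covers $\chi_0 \le 0$, the entire proof can focus on $\chi_0 > 0$. My plan is the standard $L^q$-energy method adapted to the signal-dependent sensitivity. For a parameter $q>1$ to be optimized at the end, I test the $u$-equation of~\eqref{E:main-PE} with $u^{q-1}$ and integrate over $\Omega$. Using the potential $\Phi(v)=\int_0^v (1+s)^{-\beta}\,ds$ together with the elliptic identity $\Delta v = \mu v - \nu u^\gamma$, the cross-diffusion integral simplifies so that one obtains
\begin{equation*}
  \tfrac{1}{q}\tfrac{d}{dt}\|u\|_q^q + (q-1)\!\int u^{q-2}|\nabla u|^2 + b\!\int u^{q+\alpha}
  \le a\!\int u^q + \tfrac{\chi_0(q-1)}{q+m-1}\bigl[\nu\,\mathcal I_\nu + \beta\,\mathcal I_\beta\bigr],
\end{equation*}
where $\mathcal I_\nu\coloneqq\int u^{q+m-1+\gamma}(1+v)^{-\beta}$ and $\mathcal I_\beta\coloneqq\int u^{q+m-1}(1+v)^{-\beta-1}|\nabla v|^2$; the favorable (negative) $\mu$-term coming from signal binding has been discarded. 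The whole task reduces to dominating $\mathcal I_\nu$ and $\mathcal I_\beta$ by $b\int u^{q+\alpha}$ plus a finite constant.

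In cases~(i) and~(iii) I use the crude bound $(1+v)^{-\beta}\le 1$ to estimate $\mathcal I_\nu\le\int u^{q+m-1+\gamma}$. When $\alpha>m+\gamma-1$, Young's inequality absorbs $\mathcal I_\nu$ into $\varepsilon\int u^{q+\alpha}+C(\varepsilon)$ with no smallness on $\chi_0$; when $\alpha=m+\gamma-1$ the exponents match and smallness of $\chi_0$ enters. For the gradient integral I exploit the pointwise identity $\beta\, v^{\beta}(1+v)^{-(\beta+1)}\le \Psi_\beta$ (the maximum being attained at $v=\beta$), which reduces $\mathcal I_\beta$ to an integral against $u^{q+m-1}v^{-\beta}|\nabla v|^2$; after a Hölder step and the $L^{p}$ elliptic inequality~\eqref{E:main-inequality-eq-4-0} applied to $-\Delta v+\mu v=\nu u^\gamma$, the $v$-factor produces an $L^{p}$-norm of $u^\gamma$, and optimizing over $q\downarrow q_\ast=\max\{1,N\alpha/2\}$ yields the sharp constant $K$ of~\eqref{E:K} and the threshold~\eqref{E:cond-chi-eq1}. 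In cases~(ii) and~(iv) I instead use $\beta\ge 1/2$ to split $(1+v)^{-\beta-1}|\nabla v|^2$ so that the bound $v^{2\beta-1}(1+v)^{-2\beta}\le\Theta_{2\beta-1}$ comes into play, apply Cauchy--Schwarz to $\mathcal I_\beta$, and reduce it, via~\eqref{E:main-inequality-eq-4-0} once more, to an integral of $u^{2(q+m-1)}$ plus lower-order terms; the critical exponent matching is now $2(m-1)+\gamma=\alpha$, and optimizing as before yields~\eqref{E:cond-chi-eq2}.

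In all four subcases the outcome is a differential inequality of the form $\tfrac{d}{dt}\|u(t,\cdot)\|_q^q + c_1\|u(t,\cdot)\|_{q+\alpha}^{q+\alpha}\le c_2$ for some admissible $q>q_\ast$, which supplies a uniform $L^q$-bound on $u$ on $(0,T_{\max})$. A standard $L^q$--$L^\infty$ (or Moser-type) iteration, using elliptic regularity to convert $L^q(u)$-bounds into $L^\infty$-bounds on $\nabla v$, then yields~\eqref{E:bounded-finite}. Part~(2) follows immediately: if $m\ge 1$ and $T_{\max}(u_0)<\infty$, Proposition~\ref{P:local-existence} forces $\sup_x u(t,x;u_0)\to\infty$ along $t\nearrow T_{\max}$, contradicting part~(1). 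The main obstacle I anticipate is the \emph{sharp} extraction of $\Psi_\beta$, $\Theta_{2\beta-1}$, and $K$: they arise from a simultaneous three-parameter optimization---$q\downarrow q_\ast$ (so that $(q-1)/(q+m-1)$ and the elliptic constant $C_{N,p}^\ast$ approach well-defined limits), the Young/Hölder exponent that absorbs $\mathcal I_\nu$ or $\mathcal I_\beta$ into $\int u^{q+\alpha}$, and the pointwise-in-$v$ maximum of $v^{\beta}/(1+v)^{1+\beta}$ (respectively $v^{2\beta-1}/(1+v)^{2\beta}$). Keeping these three optimizations compatible as $q\to q_\ast^+$ is the delicate step; the remainder is lengthy but routine nonlinear test-function bookkeeping.
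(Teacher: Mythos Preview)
Your overall strategy coincides with the paper's: $L^q$-energy identity, the elliptic rewriting~\eqref{E:u-lp-eq2} producing $\mathcal I_\nu$ and $\mathcal I_\beta$, pointwise $v$-optimization to extract $\Psi_\beta$ or $\Theta_{2\beta-1}$, H\"older plus the gradient estimate of Proposition~\ref{P:Main1}, then a bootstrap to all $L^p$ and finally Proposition~\ref{P:Lp->GlobalExist}. Part~(2) is exactly as you say. However, there are two concrete problems in the execution.

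The main gap is in cases~(ii) and~(iv). You keep the $\mathcal I_\nu+\mathcal I_\beta$ decomposition and refine only $\mathcal I_\beta$, but in the regime where (ii)/(iv) are genuinely new---namely $m<1$ with $2m+\gamma-2<\alpha\le m+\gamma-1$ (resp.\ $\alpha=2m+\gamma-2$)---the production term $\mathcal I_\nu\le\int u^{q+m+\gamma-1}$ has exponent $q+m+\gamma-1\ge q+\alpha$, so it \emph{cannot} be absorbed into $b\int u^{q+\alpha}$ by Young, no matter how sharply you treat $\mathcal I_\beta$. The paper avoids this by \emph{not} decomposing in (ii)/(iv): it returns to~\eqref{E:u-lp-eq1}, applies Young directly to the raw cross term to obtain $\tfrac{(p-1)\chi_0^2}{4}\int u^{p+2m-2}|\nabla v|^2(1+v)^{-2\beta}$, and then invokes Proposition~\ref{P:Main1} with $\beta'=2\beta-1$ (this is precisely where $\beta\ge 1/2$ is used) together with H\"older at $s=(p+\alpha)/\gamma$. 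No $\mathcal I_\nu$-type term ever appears, and the critical matching $p+2m-2+\gamma=p+\alpha$ is exactly $\alpha=2m+\gamma-2$.

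A smaller issue concerns~(iii). The inequality you quote, $\beta v^{\beta}(1+v)^{-(\beta+1)}\le\Psi_\beta$, is correct but is the wrong one here: it turns $\beta\mathcal I_\beta$ into $\Psi_\beta\int u^{q+m-1}|\nabla v|^2 v^{-\beta}$, whereas the only available sharp gradient bound is~\eqref{E:Main-P0}, which controls $\int|\nabla v|^{2s}/v^{s}$. The H\"older exponent forced by $\alpha=m+\gamma-1$ is $s=(q+\alpha)/\gamma$, so the weight on the gradient factor must be $v^{-s}$, not $v^{-\beta s}$; your version therefore does not produce the constant $K$. The paper instead uses Lemma~\ref{L:v-entropy}, namely $\beta v(1+v)^{-(\beta+1)}\le\Psi_\beta$ (maximum at $v=1/\beta$), which yields $\Psi_\beta\int u^{q+m-1}|\nabla v|^2/v$ and plugs directly into~\eqref{E:Main-P0}. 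With that one-line fix your argument for~(iii) matches the paper's; for~(ii)/(iv) you need the structural change above.
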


Note that the regions satisfying the conditions (i)--(iv) may overlap.  For
example, if $m\ge 1$, then $\alpha>2m+\gamma-2$ in (ii) implies the condition
(i).  We refer the reader to Fig.~\ref{fig:Parameter_Regions} for the regions
described by (i)--(iv), and to Remark~\ref{R:global-existence3} for more
explanations of these regions.

\medskip

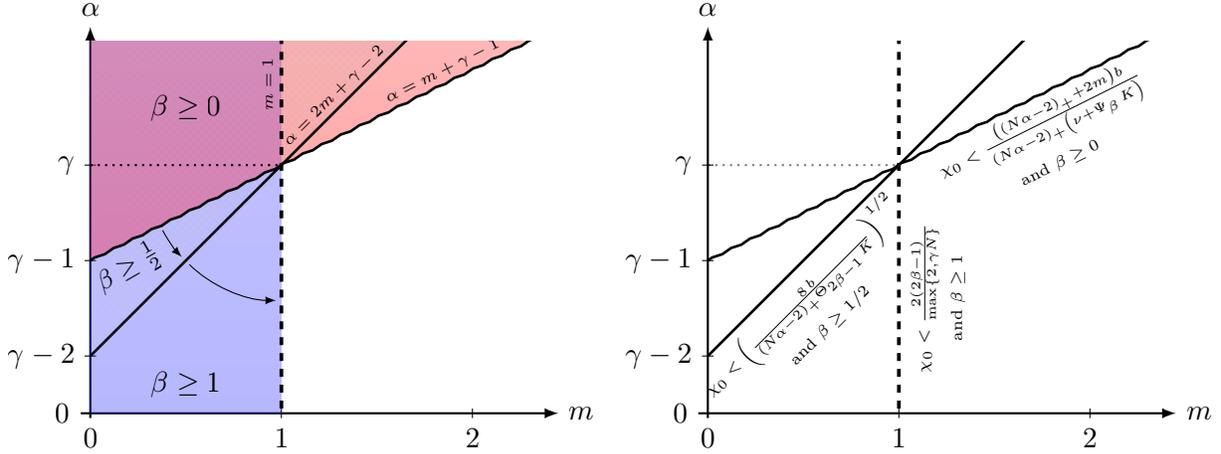
\begin{figure}[htpb]
  \centering
  \begin{tikzpicture}[x=4em, y=2em, scale = 1.65, baseline=(current bounding box.center), blend group=soft light]
    \tikzset{>=latex}

    \def\shift{0.4}

    \node at (0.5, 3.2) {$\beta \ge 0$};
    \node[font=\small, rotate = 30] at (0.2, 1.52) {$\beta \ge \frac{1}{2}$};
    \node at (0.5, 0.3) {$\beta \ge 1$};

    \begin{scope}
      \clip (0,0) rectangle (2.7,3.9); 

      \draw[line width=1.4pt, dashed]
        (1,0.05) --++ (0,4)
        node [pos=0.85, above, sloped, font=\tiny, yshift = -0.1em] {$m=1$};

      \draw [line width=1.0pt, solid]
        (0,1-\shift) --++ (2,4)
        node [pos=0.66, above, sloped, font=\tiny, yshift = -0.2em] {$\alpha = 2m + \gamma - 2$};

      \draw [dotted, thick, opacity=1] (0, 3-\shift) --++ (1,0);

      \draw [
        decorate,
        decoration={zigzag, segment length=8pt, amplitude=0.3pt},
        line width=1.0pt
        ]
        (0,2-\shift) --++ (2.4,2.4)
        node [pos = 0.78, above, sloped, font=\tiny, yshift = -0.2em] {$\alpha = m + \gamma - 1$};

      \coordinate (O)  at (1,3-\shift);

      \coordinate (Ltwo) at (0,1-\shift);  
      \coordinate (Lone) at (0,2-\shift);  
      \coordinate (Vup)  at (1,1);         

      \pic [
        draw,
        ->,
        angle radius=1.8cm,
        shorten >= 2pt,
        shorten <= 2pt
      ] {angle = Lone--O--Ltwo};

      \pic [
        draw,
        ->,
        angle radius=1.8cm,
        shorten >= 2pt,
        shorten <= 2pt
      ] {angle = Ltwo--O--Vup};

      \shadedraw[
        pattern=crosshatch,
        draw=none,
        opacity=0.1
        ] (0,1-\shift) --++ (2,4) --++ (-2,0) -- cycle;

      \shadedraw[
        bottom color=red!90,  
        top color=red!70,      
        draw=none,
        opacity=0.4
        ] (0,2-\shift) --++ (2.4,2.4) --++ (-2.4,0) -- cycle;

      \shadedraw[
        bottom color=blue!70,  
        top color=blue!60,      
        draw=none,
        opacity=0.40
        ] (0,0) --++ (1,0) --++ (0,4) --++ (-1,0) -- cycle;

    \end{scope}

    \draw (+0.03,1-\shift) --++ (-0.06,0) node [left] {$\gamma - 2$};
    \draw (+0.03,2-\shift) --++ (-0.06,0) node [left] {$\gamma - 1$};
    \draw (+0.03,3-\shift) --++ (-0.06,0) node [left] {$\gamma$};
    \draw[thick, ->] (-0.05,0) node [left] {$0$} --++ (2.5,0) node [right] {$m$};
    \draw[thick, ->] (0,-0.05) node [below] {$0$} --++ (0,4.1) node [above] {$\alpha$};
    \draw (1,+0.05) --++ (0,-0.1) node [below] {$1$};
    \draw (2,+0.05) --++ (0,-0.1) node [below] {$2$};

  \end{tikzpicture}
  \begin{tikzpicture}[x=4em, y=2em, scale = 1.65, baseline=(current bounding box.center)]
    \tikzset{>=latex}

    \def\shift{0.4}

    \begin{scope}
      \clip (0,0) rectangle (2.7,3.9); 

      \draw[line width=1.4pt, dashed]
        (1,0.05) --++ (0,4)
	        node [pos=0.29, below, sloped, font=\tiny, yshift = -0.0em] {$\chi_0 < \frac{2(2\beta-1)}{\max\{2,\gamma N\}}$}
	        node [pos=0.29, below, sloped, font=\tiny, yshift = -1.4em] {and $\beta \ge 1$};

      \draw [line width=1.0pt, solid]
        (0,1-\shift) --++ (2,4)
        node [pos=0.20, below, sloped, font=\tiny, yshift = -1.6em] {and $\beta \ge 1/2$}
        node [pos=0.20, below, sloped, font=\tiny, yshift = 0.2em] {$\chi_0 < \left( \frac{8\,b}{(N\alpha-2)_+   \Theta_{2\beta-1}\, K}\right)^{1/2}$};


      \draw [dotted, thick, opacity=0.5] (0, 3-\shift) --++ (1,0);



      \draw [
        decorate,
        decoration={zigzag, segment length=8pt, amplitude=0.3pt},
        line width=1.0pt
        ]
        (0,2-\shift) --++ (2.4,2.4)
        node [pos = 0.70, below, sloped, font=\tiny, yshift = -1.80em] {and $\beta \ge 0$}
        node [pos = 0.70, below, sloped, font=\tiny, yshift = 0.20em] {$\chi_0 < \frac{\big((N\alpha-2)_+ + 2m\big) b}{(N\alpha-2)_+ \left(\nu + {\Psi_\beta}\, K \right)}$};

    \end{scope}


    \draw (+0.03,1-\shift) --++ (-0.06,0) node [left] {$\gamma - 2$};
    \draw (+0.03,2-\shift) --++ (-0.06,0) node [left] {$\gamma - 1$};
    \draw (+0.03,3-\shift) --++ (-0.06,0) node [left] {$\gamma$};
    \draw[thick, ->] (-0.05,0) node [left] {$0$} --++ (2.5,0) node [right] {$m$};
    \draw[thick, ->] (0,-0.05) node [below] {$0$} --++ (0,4.1) node [above] {$\alpha$};
    \draw (1,+0.05) --++ (0,-0.1) node [below] {$1$};
    \draw (2,+0.05) --++ (0,-0.1) node [below] {$2$};

  \end{tikzpicture}

  \caption{Overlapping parameter regimes guaranteeing global classical solutions
    of the parabolic--elliptic system~\eqref{E:main-PE} in the
    $(m,\alpha)$-plane (see Theorems~\ref{T:weak-cross-diffusion}
    and~\ref{T:strong-logistic-source}). Left: open regions determined by the
    three lines $\alpha = m + \gamma - 1$, $\alpha = 2m + \gamma - 2$, and $m =
    1$; the red wedge strictly above $\alpha = m + \gamma - 1$ requires $\beta
    \ge 0$, the crosshatched region strictly above $\alpha = 2m + \gamma - 2$
    requires $\beta \ge \tfrac{1}{2}$, and the blue strip with $0 < m < 1$
    requires $\beta \ge 1$. As we enlarge the admissible region to include
    smaller values of $\alpha$ and $m$, the lower bound on $\beta$ is
    strengthened from $\beta \ge 0$ to $\beta \ge \tfrac{1}{2}$ and then to
    $\beta \ge 1$, which shows that one needs larger $\beta$ to help stabilize
    the system. Right: the same three boundary lines, on which we have the
  corresponding critical cases; along each line global existence holds provided
$\chi_0$ satisfies the indicated smallness condition. }

  \label{fig:Parameter_Regions}
\end{figure}

\subsection{Remarks}\label{SS:remarks}

In this subsection, we provide some remarks on our main results, some
difficulties that arise, and the techniques and ideas utilized in the proofs
of those results.

\begin{remark}[Local existence]\label{R:local-existence}
  This remark is about the local existence of classical solutions
  of~\eqref{E:main-PE}.
  \begin{itemize}

    \item[(1)] As mentioned before, most studies in the literature on the
      existence of classical solutions of~\eqref{E:main-PE} (even with $\beta =
      0$) focus on the case where $m \ge 1$ and/or $\gamma \ge 1$
      (see~\cite{galakhov.salieva.ea:16:on, hu.tao:17:boundedness, zheng:24:on,
      xiang:19:dynamics, zabarankin:16:analytical}). In such cases, if a
      positive classical solution does not exist globally, it necessarily blows
      up in finite time.

    \item[(2)] Proposition~\ref{P:local-existence} includes the existence of
      classical solutions of~\eqref{E:main-PE} with $m < 1$. When $m < 1$,
      $\nabla u^m(t,x)$ may fail to exist at points $(t,x)$ where $u(t,x) = 0$
      for a classical solution $(u(t,x),v(t,x))$ of~\eqref{E:main-PE}. This is
      the reason that $u_0$ in Proposition~\ref{P:local-existence} is assumed to
      satisfy $\inf_{x\in\Omega} u_0(x) > 0$.
      Proposition~\ref{P:local-existence} asserts that if a positive classical
      solution of~\eqref{E:main-PE} does not exist globally, then either the
      solution blows up in finite time, or its infimum approaches zero at a
      finite time. We note that it remains an open problem whether the latter
      scenario can occur when $m < 1$.

  \end{itemize}
\end{remark}

\begin{remark}[Boundedness and global existence]\label{R:global-existence0}
This remark is about one principal idea used in the literature to prove
boundedness and global existence of classical solutions of chemotaxis models.

\begin{itemize}
  \item[(1)] In many studies, the $L^\infty$-boundedness of positive classical
    solutions of a chemotaxis model is derived from their $L^p$-boundedness for
    some $p \gg 1$. For the system~\eqref{E:main-PE}, the boundedness of the
    $L^p$-norm of a positive classical solution for sufficiently large $p$ also
    implies the boundedness of its $L^\infty$-norm (see
    Proposition~\ref{P:Lp->GlobalExist}). However, it is not easy to prove the
    $L^p$-boundedness of positive solutions of~\eqref{E:main-PE}, due to the
    nonlinear sensitivity and the arbitrariness of $\gamma$. More precisely,
    suppose that $(u,v) = \left(u(t,x),v(t,x)\right)$ is a positive classical
    solution of~\eqref{E:main-PE} on some interval $(0,T)$. Then for any $p > 1$
    and $t \in (0, T)$, we have
      \begin{align}\label{E:u-lp-eq1}
      \frac{1}{p} \cdot \frac{d}{dt} \int_{\Omega} u^p
       & = -(p-1)\int_{\Omega} u^{p-2}|\nabla u|^2 + (p-1) \chi_0 \int_{\Omega} \frac{u^{p+m -2}}{(1+v)^\beta}\nabla u \cdot \nabla v \nonumber \\
       & \quad +\int_{\Omega} a u^p -  \int_{\Omega} bu^{p+\alpha}.
     \end{align}
   Multiplying the second equation of~\eqref{E:main-PE} by
   $\frac{u^{p+m-1}}{(1+v)^\beta}$ and integrating it over $\Omega$ yield
   \begin{align}\label{E:u-lp-eq2}
     (p-1)\chi_0\int_\Omega \frac{u^{p+m-2}}{(1+v)^\beta}\nabla u\cdot\nabla v
      & = -\frac{(p-1)\chi_0\mu}{p+m-1} \int_\Omega \frac{u^{p+m-1}}{(1+v)^\beta} v
              + \frac{(p-1)\chi_0\beta}{p+m-1}\int_\Omega \frac{u^{p+m-1}}{(1+v)^{\beta+1}}|\nabla v|^2\nonumber \\
      & \quad +\frac{(p-1)\chi_0\nu}{p+m-1}\int_\Omega \frac{u^{p+m-1}}{(1+v)^\beta} u^\gamma\quad \text{for all $t\in \left(0,T\right)$.}
   \end{align}
   By~\eqref{E:u-lp-eq1} and~\eqref{E:u-lp-eq2}, we have
   \begin{align}\label{E:u-lp-eq3}
     \frac{1}{p}\frac{d}{dt}\int_\Omega u^p
      & =-(p-1)\int_\Omega u^{p-2}|\nabla u|^2 -\frac{(p-1)\chi_0\mu}{p+m-1} \int_\Omega \frac{u^{p+m-1}}{(1+v)^\beta} v \nonumber \\
      & \quad + \frac{(p-1)\chi_0\beta}{p+m-1}\int_\Omega \frac{u^{p+m-1}}{(1+v)^{\beta+1}}|\nabla v|^2
              + \frac{(p-1)\chi_0\nu}{p+m-1}\int_\Omega \frac{u^{p+m-1}}{(1+v)^\beta} u^\gamma \nonumber                           \\
      & \quad + a\int_\Omega u^p-b\int_\Omega u^{p+\alpha}\quad \text{for all $t\in (0,T)$.}
   \end{align}
   One may then use \eqref{E:u-lp-eq1} or \eqref{E:u-lp-eq3} to control the
   terms involving $\nabla v$ and/or $v$ using the good terms, in particular,
   the first and last terms on the right hand side in~\eqref{E:u-lp-eq1}
   or~\eqref{E:u-lp-eq3}. However, several difficulties arise when doing so (see
   (2) and (3) for the examples).

  \item[(2)] When $\chi_0<0$, to estimate $\int_\Omega u^p$
    using~\eqref{E:u-lp-eq3}, it turns out that we need to estimate $\int_\Omega
    v$ (see Remark~\ref{R:chi-negative}). But, by the second equation
    in~\eqref{E:main-PE}, $\mu \int_\Omega v = \nu \int_\Omega u^\gamma$. Hence,
    when $\gamma > 1$, it is not easy to see that $\int_\Omega v$ is bounded.

  \item[(3)] In the case $\chi_0>0$, to estimate $\int_\Omega u^p$
    using~\eqref{E:u-lp-eq3}, we need to control the term involving $\nabla v$
    in~\eqref{E:u-lp-eq1} or \eqref{E:u-lp-eq3}. For this, it turns out that we
    need to establish the following important estimate
   \begin{equation}\label{E:new-new-eq}
     \int_{\Omega} \frac{|\nabla v(t,x)|^{2q}}{\left(1+v(t,x)\right)^{(1+\beta) q}}
     \le \Theta_\beta^q M^* \int_{\Omega} u^{\gamma q}(t,x),
   \end{equation}
   where $M^* > 0$ is some constant that does not depend on $(u, v)$ and
   $\Theta_\beta$ is given in~\eqref{E:Psi-Theta} (see Proposition~\ref{P:Main1}
   below). Such an estimate can be proved by similar arguments as the ones used
   in~\cite[Proposition 1.3]{kurt.shen:20:finite-time}.  In this paper, we
   provide a new and simple proof of this important estimate, together with its
   extension from $\beta = 0$ to general $\beta \ge 0$.

\end{itemize}
\end{remark}

\begin{remark}[Boundedness and global existence with negative sensitivity]\label{R:global-existence1}

  This remark is about the boundedness and global existence of classical
  solutions of~\eqref{E:main-PE} stated in Theorem~\ref{T:negative-sensitivity}
  and the techniques and ideas in its proof.

\begin{itemize}
  \item[(1)] Theorem~\ref{T:negative-sensitivity} is proved for the first time.
    Among others, it indicates that negative chemotaxis does not induce
    finite-time blow-ups, and when $m\ge 1$, it also guarantees the global
    existence of positive classical solutions.

  \item[(2)] As it is mentioned in Remark~\ref{R:global-existence0}, when
    $\chi_0<0$, it is difficult to estimate $\int_\Omega u^p$
    using~\eqref{E:u-lp-eq3}. To overcome this difficulty, we provide a direct
    proof of the boundedness of $\Norm{u(t,\cdot;u_0)}_\infty$. As a byproduct,
    we also obtain an explicit upper bound for $\Norm{u(t,\cdot;u_0)}_\infty$.
    The proof is new and can also be applied to other chemotaxis models.

\end{itemize}

\end{remark}

\begin{remark}[Boundedness and global existence with weak nonlinear cross diffusion rate]\label{R:global-existence2}

  This remark is about the boundedness and global existence of classical
  solutions of~\eqref{E:main-PE} stated in Theorem~\ref{T:weak-cross-diffusion}
  and the techniques and ideas in its proof.

  \begin{itemize}

    \item[(1)] Note that when $u \ge 1$, $\frac{u^m}{(1+v)^\beta}$ increases as
      $m$ increases and decreases as $\beta$ increases. We therefore refer
      $\frac{u^m}{(1+v)^\beta}$ as weak nonlinear cross diffusion rate when $0 <
      m \le 1$ and $\beta\ge 1$.

    \item[(2)] In Theorem~\ref{T:weak-cross-diffusion}, $a$ and $b$ can be zero.
      Hence it applies to the minimal model. Its proof involves nontrivial
      applications of Ehrling's Lemma, Gagliardo-Nirenberg inequality, and the
      inequality~\eqref{E:new-new-eq} established in Proposition~\ref{P:Main1}.

    \item[(3)] Theorem~\ref{T:weak-cross-diffusion}(1) indicates that the regular
      diffusion dominates the dynamics when $0<m<1$ and $\beta \ge 1$, and
      Theorem~\ref{T:weak-cross-diffusion}(2) indicates that when $m = 1$ and
      $\beta \ge 1$, finite-time blow-up does not occur when $\chi_0$ is not
      large.

	    \item[(4)] When $m=1$, the condition $\chi_0 < \frac{2(2\beta-1)}{\max\{2,
	      \gamma N\}}$ indicates that large $\beta$ has less of an effect on the
	      evolution of the population density of the biological species.
	      Equivalently, for $m=1$ and $\beta \ge 1$ this smallness condition on
	      $\chi_0$ can be written as the large-$\beta$ requirement
	      \[
	        \beta > \max\left\{1,\; \frac{1}{2} + \frac{\chi_0}{4}\, \max\{2,\gamma N\}\right\},
	      \]
	      which is exactly the condition quoted in the abstract.

  \end{itemize}

\end{remark}

\begin{remark}[Boundedness and global existence with relatively strong logistic source]\label{R:global-existence3}

  This remark is about the boundedness and global existence of classical
  solutions of~\eqref{E:main-PE} stated in
  Theorem~\ref{T:strong-logistic-source} and the techniques and ideas in its
  proof. 

  \begin{itemize}

    \item[(1)] Theorem \ref{T:strong-logistic-source} reflects the influence of
      logistic source on the boundedness and global existence of classical
      solutions of~\eqref{E:main-PE}. The conditions $\alpha > m + \gamma - 1$
      in (i) and $\alpha > 2m + \gamma - 2$ in (ii) indicate that the logistic
      source is relatively strong compared with the nonlinear cross diffusion
      function $\frac{u^m}{(1+v)^\beta}$ and the production function $u^\gamma$.
      (iii) and (iv) are borderline cases of (i) and (ii), respectively.

    \item[(2)] The conditions in (i)--(iv) overlap as illustrated in
      Fig.~\ref{fig:Parameter_Regions}. For example, if $\alpha = m + \gamma -
      1$, $m \in (0,1)$, and $\beta \ge \tfrac{1}{2}$, then $\alpha > 2m +
      \gamma - 2$, so condition~(ii) applies and hence~\eqref{E:bounded-finite}
      holds without assuming~\eqref{E:cond-chi-eq1}. Observe
      that~\eqref{E:cond-chi-eq1} always holds for $N \le 2/\alpha$ because in
      this case $(N\alpha-2)_+ = 0$ in~\eqref{E:cond-chi-eq1}.

    \item[(3)] As mentioned above, the work~\cite{galakhov.salieva.ea:16:on}
      studied the dynamics of system~\eqref{E:main-PE} with $m, \gamma \ge 1$,
      $a = b$, and $\beta = 0$, and identified the parameter regions for the
      global existence of all nonnegative classical solutions. The reader is
      also referred to~\cite{tello.winkler:07:chemotaxis} for the study
      of~\eqref{E:main-PE} with $\beta = 0$ and $m = \alpha = \gamma = 1$.
      Although it is expected that the signal-dependent sensitivity
      $(1+v)^{-\beta}$ ($\beta>0$) would not affect the global existence of the
      solutions, proving this is nontrivial.
      Theorem~\ref{T:strong-logistic-source} shows that this holds partially.
      For example, the condition~(i) in
      Theorem~\ref{T:strong-logistic-source}(1), namely, $\alpha > m + \gamma -
      1$, is independent of $\beta$ and coincides with Eq.~(1.7)
      of~\cite{galakhov.salieva.ea:16:on}, which demonstrates that~Theorem 1.1
      of~\cite{galakhov.salieva.ea:16:on} under the condition Eq.~(1.7)
      of~\cite{galakhov.salieva.ea:16:on} for~\eqref{E:main-PE} with $\beta = 0$
      remains valid for any $\beta > 0$. The function $\Theta_\beta$ in
      condition \eqref{E:cond-chi-eq2} is a monotone decreasing function of
      $\beta$ that approaches zero as $\beta \to \infty$ (see
      Fig.~\ref{fig:Psi-Theta}). Hence a larger value of $\beta$ makes
      condition~\eqref{E:cond-chi-eq2} easier to satisfy, or equivalently,
      increases the threshold for admissible $\chi_0$.

    \item[(4)] Theorem~\ref{T:strong-logistic-source} (2) indicates that, when
      $m \ge 1$, the boundedness of $u(t,x;u_0)$ on $[0, T_{\max}(u_0))$
      implies $T_{\max}(u_0) = \infty$. It remains open whether the boundedness
      of $u(t,x;u_0)$ on $[0, T_{\max}(u_0))$ implies $T_{\max}(u_0) = \infty$
      when $m < 1$.

  \end{itemize}
\end{remark}

\begin{remark}[Three $\chi_0$-smallness conditions when
  $m=1$, $\alpha = \gamma$, and $\beta \ge 1$]\label{R:chi0-beta-comparison}

  In the slice $m=1$ and $\alpha=\gamma$, the two borderline relations
  $\alpha=m+\gamma-1$ and $\alpha=2m+\gamma-2$ coincide; see
  Fig.~\ref{fig:Parameter_Regions}. Hence, when $a,b>0$ and $\beta\ge 1$, the
  boundedness conclusion may be obtained either from
  Theorem~\ref{T:strong-logistic-source}(1) in cases~(iii)--(iv) or from
  Theorem~\ref{T:weak-cross-diffusion}(2). To emphasize the role of the
  signal-dependent sensitivity $(1+v)^{-\beta}$, we compare the corresponding
  $\chi_0$-smallness requirements. Set $K=K(N,\gamma,\gamma,\mu,\nu)$ as
  in~\eqref{E:K}. When $N\gamma>2$, the three conditions take the form:
  \[
    \chi_0<\chi_{*,1}(\beta),\quad
    \chi_0<\chi_{*,2}(\beta),\quad
    \chi_0<\chi_{*,\mathrm{w}}(\beta),
  \]
  Here $\Psi_\beta$ and $\Theta_\beta$ are defined in~\eqref{E:Psi-Theta}.
  These thresholds are given by:
  \begin{subequations}\label{E:critical}
    \begin{align}
      \chi_0
      & < \chi_{*,1}(\beta)
      \coloneqq \frac{N\gamma\, b}{(N\gamma-2)\left(\nu + \Psi_\beta K\right)},
      \label{E:critical-1}\\
      \chi_0
      & < \chi_{*,2}(\beta)
      \coloneqq \left(\frac{8b}{(N\gamma-2)\Theta_{2\beta-1}K}\right)^{1/2},
      \label{E:critical-2}\\
      \chi_0
      & < \chi_{*,\mathrm{w}}(\beta)
      \coloneqq \frac{2(2\beta-1)}{\max\{2,\gamma N\}}.
      \label{E:critical-w}
    \end{align}
  \end{subequations}
  When $N\gamma\le 2$, we have $(N\gamma-2)_+=0$ and the right-hand sides of
  \eqref{E:cond-chi-eq1} and~\eqref{E:cond-chi-eq2} are interpreted as
  $+\infty$. In particular, the strong-logistic borderline conditions impose no
  restriction on $\chi_0$. In the region where all three criteria are
  available, the least restrictive sufficient condition on $\chi_0$ is
  therefore
  \[
    \chi_0<\max\left\{\chi_{*,1}(\beta),\chi_{*,2}(\beta),\chi_{*,\mathrm{w}}(\beta)\right\}.
  \]

  \begin{enumerate}

    \item (Role of $\beta$.) The $\beta$-dependence of the three thresholds
      in~\eqref{E:critical} is qualitatively different. The functions
      $\beta\mapsto\chi_{*,2}(\beta)$ and
      $\beta\mapsto\chi_{*,\mathrm{w}}(\beta)$ are increasing, reflecting the
      stabilizing effect of larger $\beta$. In contrast,
      $\beta\mapsto\chi_{*,1}(\beta)$ is decreasing because
      $\Psi_\beta=\left(\frac{\beta}{1+\beta}\right)^{1+\beta}$ is increasing in
      $\beta$ and $\Psi_\beta\to e^{-1}$ as $\beta\to\infty$. In particular,
      using $\Theta_{\eta}=\eta^\eta(1+\eta)^{-(1+\eta)}\sim e^{-1}\eta^{-1}$ as
      $\eta\to\infty$, we obtain for $N\gamma>2$ that, as $\beta\to\infty$,
      \begin{align*}
        \chi_{*,1}(\beta)
        & \to \frac{N\gamma\, b}{(N\gamma-2)\left(\nu + e^{-1}K\right)},
        \\
        \chi_{*,2}(\beta)
        & \sim \left(\frac{8e\,b}{(N\gamma-2)K}\right)^{1/2}\,(2\beta-1)^{1/2},
        \\
        \chi_{*,\mathrm{w}}(\beta)
        & \sim \frac{4}{\max\{2,\gamma N\}}\,\beta.
      \end{align*}
      Thus, for fixed $b$ and $N\gamma>2$, the admissible size of $\chi_0$ grows
      at most like $O(\beta^{1/2})$ under \eqref{E:cond-chi-eq2} but like
      $O(\beta)$ under Theorem~\ref{T:weak-cross-diffusion}(2). In particular,
      for sufficiently large $\beta$, $\chi_{*,\mathrm{w}}(\beta)$ provides the
      least restrictive condition on $\chi_0$ among the three.

    \item (Tradeoff with the damping strength $b$.) For fixed $\beta$, the two
      strong-logistic thresholds satisfy $\chi_{*,1}(\beta)=O(b)$ and
      $\chi_{*,2}(\beta)=O(b^{1/2})$, whereas $\chi_{*,\mathrm{w}}(\beta)$ does
      not involve $b$ at all. Hence, for moderate $\beta$ and large $b$, either
      \eqref{E:cond-chi-eq1} or \eqref{E:cond-chi-eq2} can allow substantially
      larger $\chi_0$ than Theorem~\ref{T:weak-cross-diffusion}(2).

    \item (Role of the dimension $N$.) The dependence on $N$ enters explicitly
      through $N\gamma-2$ and $\max\{2,\gamma N\}$ and implicitly through
      $K=K(N,\gamma,\gamma,\mu,\nu)$. If $N\gamma\le 2$, then
      \eqref{E:cond-chi-eq1}--\eqref{E:cond-chi-eq2} are unconditional in
      $\chi_0$, whereas \eqref{E:critical-w} yields $\chi_0<2\beta-1$. If
      $N\gamma>2$, then $\max\{2,\gamma N\}=\gamma N$ and
      $\chi_{*,\mathrm{w}}(\beta)=\frac{2(2\beta-1)}{\gamma N}$ decays like
      $1/N$ for fixed $\beta$. In comparison, ignoring the $N$-dependence of
      $K$, the explicit prefactors in $\chi_{*,1}(\beta)$ and
      $\chi_{*,2}(\beta)$ scale only like $N\gamma/(N\gamma-2)$ and
      $(N\gamma-2)^{-1/2}$, respectively. This is consistent with the heuristic
      that aggregation becomes harder to control in higher dimensions,
      especially in view of the clear $1/N$ decay of the weak-cross-diffusion
      threshold $\chi_{*,\mathrm{w}}(\beta)$.

  \end{enumerate}
\end{remark}

\begin{remark}[Optimality via blow-up in related models]
  We briefly collect blow-up results in related Keller--Segel models which can
  be viewed as evidence for near-optimality of the logistic thresholds in
  Theorem~\ref{T:strong-logistic-source}. To make the comparison transparent,
  consider the constant-sensitivity and linear-production slice $\beta = 0$ and
  $m = \gamma = 1$. Then~\eqref{E:main-PE} becomes a parabolic--elliptic
  Keller--Segel model with logistic-type term $au-bu^{1+\alpha}$ coupled to an
  elliptic signal equation with linear decay term $-\mu v$, namely $-\Delta
  v+\mu v=\nu u$.   In the Possion-type case $\mu=0$ (where Neumann solvability
  requires subtracting the spatial mean of $u$, i.e., substracting
  $\nu\int_\Omega u(t,x)dx$ on the right-hand side of $-\Delta v+\mu v=\nu u$),
  Theorem~1.1 of~\cite{fuest:21:approaching} constructs radially symmetric
  initial data in balls which lead to finite-time blow-up for all
  $\kappa=1+\alpha<2$ in dimensions $N\ge 4$, and even at the borderline
  exponent $\kappa=2$ in dimensions $N\ge 5$ provided the coefficient of
  $u^\kappa$ is sufficiently small. Moreover, even in the case $0=\Delta v-v+u$
  with linear signal decay, \cite[Theorem~1.1]{winkler:18:finite-time} shows
  that finite-time blow-up can occur in dimensions $N\in\{3,4\}$ for suitable
  radial data if $\kappa$ is only slightly above $1$ (in particular for
  $\kappa<7/6$), demonstrating that superlinear logistic death does not, in
  general, prevent chemotactic collapse. Finally, in the absence of logistic
  damping, finite-time blow-up in parabolic--elliptic chemotaxis systems is
  classical in dimensions $N\ge 2$ for suitable choices of sensitivity and data,
  see, e.g., \cite{nagai:95:blow-up, nagai.senba:98:global}. These results do
  not directly apply to the full structure of~\eqref{E:main-PE} with
  signal-dependent sensitivity $(1+v)^{-\beta}$ and general exponents
  $(m,\gamma)$, but they support the qualitative message that, without
  additional smallness assumptions on the taxis strength (or without
  sufficiently strong damping), one cannot expect unconditional boundedness for
  small $\alpha$, and that the borderline $\alpha=1$ is a natural threshold in
  closely related parabolic--elliptic settings.
\end{remark}

\bigskip

The rest of the paper is organized as follows. In Section~\ref{S:Prelim}, we
present some preliminary material to be used in later sections. We study the
boundedness and global existence of system~\eqref{E:main-PE} and prove
Theorems~\ref{T:negative-sensitivity}, \ref{T:weak-cross-diffusion},
and~\ref{T:strong-logistic-source} in Sections~\ref{S:negative-sensitivity},
\ref{S:weak-cross-diffusion}, and~\ref{S:strong-logistic-source}, respectively.
Throughout this paper, we make a deliberate effort to keep many constants as
explicit as possible whenever it is feasible, as this may help clarify the
structure of the argument and enable readers to follow the proof procedures more
easily.

\section{Preliminaries}\label{S:Prelim}

In this section, we state some preliminary materials to be used in later
sections.

\subsection{Analytic semigroups generated by $A = -\Delta +\mu I$}\label{SS:Semigroup}

In this subsection, we recall some basic properties of the analytic semigroup
generated by $-\Delta +\mu I$. Let $1\le p < \infty$ and $A_p = -\Delta +\mu I$
with domain $\mathcal{D}(A_p)$ given by
\begin{align*}
  \mathcal{D}(A_p)=\left\{u\in W^{2,p}(\Omega)\,:\, \text{$\frac{\partial u}{\partial n}=0$ on $\partial\Omega$} \right\}.
\end{align*}
The operator $A_p$ generates an analytic semigroup, denoted by $T_p(t) = e^{-A_p
t}$, on $L^p(\Omega)$. Let $X_p^\sigma$ be the fractional power space associated
with $A$, i.e., $X_p^\sigma = \mathcal{D}(A_p^\sigma)$ for $\sigma>0$. Let
$A_\infty = -\Delta + \mu I$ whose domain $\mathcal{D}(A_\infty)$ is given by
\begin{align*}
  \mathcal{D}(A_\infty)=\left\{u\in C^2(\overline{\Omega})\,:\, \text{$\frac{\partial u}{\partial n}=0$ on $\partial\Omega$} \right\}.
\end{align*}
The operator $A_\infty$ generates an analytic semigroup, denoted by $T_\infty(t)
= e^{-A_\infty t}$, on $C(\overline{\Omega})$. The reader is referred
to~\cite{henry:81:geometric} for further details on analytic semigroups and
fractional power spaces.

Throughout the rest of the paper, we suppress the subscription $p$ in $A_p$ and
$T_p(t)$ $(1 \le p \le \infty$) when context allows. It should be clear from the
context whether $e^{-At}$ represents the analytic semigroup in $L^p(\Omega)$
generated by $A_p$ or the analytic semigroup in $C(\overline{\Omega})$ generated
by $A_\infty$.

\begin{lemma}[{\cite[Theorem 1.4.3]{henry:81:geometric} or~\cite[Theorem
  6.13]{pazy:83:semigroups}}]\label{L:FracSemigroup} For all $\sigma \ge 0$, $p
  \in [1, \infty]$, and $\delta \in (0, \mu)$, there is a constant $C_{\sigma,
  p, \delta} > 0$ such that
  \begin{equation}\label{E:FracSemigroup-1}
    \Norm{{A_p^\sigma e^{-A_pt}} u}_p \le C_{\sigma,p,\delta} t^{-\sigma} e^{-\delta t} \Norm{u}_p, \qquad \text{for all $t>0$.}
  \end{equation}
  For each $\sigma \in (0, 1]$, there is a constant $C_\sigma > 0$ such that
  for any $u \in X^\sigma_p$,
  \begin{equation}\label{E:FracSemigroup-2}
    \Norm{\big(e^{-A_p t}-I\big)u}_p \le C_\sigma t^\sigma \Norm{A^\sigma u}_p \qquad \text{for all $t>0$.}
  \end{equation}
\end{lemma}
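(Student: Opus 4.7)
The plan is to derive both estimates from the standard analytic-semigroup framework, exploiting the fact that $A = -\Delta + \mu I$ with homogeneous Neumann boundary conditions is a sectorial operator on $L^p(\Omega)$ for $1 \le p < \infty$ and on $C(\overline\Omega)$, with spectrum contained in $[\mu,\infty)$. This sectoriality, together with the analyticity of $T(t) = e^{-At}$, is precisely the starting point of the classical theory of Henry and Pazy, and I would invoke it as the foundation of the argument rather than redevelop it.

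For \eqref{E:FracSemigroup-1}, I would start from the Dunford--Taylor representation
\[
  A^\sigma e^{-At} \;=\; \frac{1}{2\pi i}\int_\Gamma z^\sigma e^{-zt}(zI-A)^{-1}\,dz,
\]
where $\Gamma$ is a sectorial contour enclosing $\operatorname{Spec}(A)\subset[\mu,\infty)$. The change of variable $z\mapsto z/t$ combined with the standard resolvent bound $\|(zI-A)^{-1}\|_{p\to p}\le M/|z|$ along $\Gamma$ produces the factor $t^{-\sigma}$ and yields $\|A^\sigma e^{-At}\|_{p\to p}\le C_\sigma t^{-\sigma}$. To pick up the decay factor $e^{-\delta t}$ with $\delta\in(0,\mu)$, I would replace $A$ by the shifted operator $A_\delta := A-\delta I$, which is still sectorial with spectrum in $[\mu-\delta,\infty)$, so $\Gamma$ can be deformed into the half-plane $\{\operatorname{Re} z\ge\delta\}$; this extracts a uniform factor $e^{-\delta t}$ from $e^{-zt}$ along $\Gamma$ and gives \eqref{E:FracSemigroup-1}.

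For \eqref{E:FracSemigroup-2}, the starting identity is
\[
  (e^{-At}-I)u \;=\; -\int_0^t A\,e^{-As}u\,ds \qquad \text{for } u\in\mathcal{D}(A),
\]
which follows from the fundamental theorem of calculus applied to $s\mapsto e^{-As}u$. For $u\in X_p^\sigma = \mathcal{D}(A^\sigma)$ with $\sigma\in(0,1]$, since $e^{-As}$ commutes with $A^\sigma$, one has $A\,e^{-As}u = A^{1-\sigma}e^{-As}(A^\sigma u)$, hence
\[
  (e^{-At}-I)u \;=\; -\int_0^t A^{1-\sigma}e^{-As}(A^\sigma u)\,ds.
\]
Applying \eqref{E:FracSemigroup-1} with exponent $1-\sigma$ (and, say, $\delta=0$, as only a local-in-$t$ bound is needed) gives $\|A^{1-\sigma}e^{-As}\|_{p\to p}\le C\,s^{-(1-\sigma)}$, and integrating yields
\[
  \Norm{(e^{-At}-I)u}_p \;\le\; C\,\Norm{A^\sigma u}_p\int_0^t s^{\sigma-1}\,ds \;=\; \frac{C}{\sigma}\,t^\sigma\,\Norm{A^\sigma u}_p,
\]
which is \eqref{E:FracSemigroup-2}.

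The hard part is not the estimates themselves, which amount to contour calculus, but the functional-analytic groundwork: verifying that $A_\infty$ truly generates an analytic semigroup on $C(\overline\Omega)$ (less standard than the $L^p$ setting due to the loss of reflexivity), and justifying the density arguments needed to extend the $\mathcal{D}(A)$-identity for $(e^{-At}-I)u$ to $u\in\mathcal{D}(A^\sigma)$ via closedness of $A^\sigma$ and an approximation $u_n\in\mathcal{D}(A)\to u$ in the $X_p^\sigma$-norm. These technicalities are precisely what the cited results in Henry's monograph and Pazy's book take care of, so in practice I would simply invoke them and treat the lemma as a black box.
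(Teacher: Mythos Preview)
Your sketch is correct and follows exactly the standard contour-integral/Dunford--Taylor argument for \eqref{E:FracSemigroup-1} and the integrated fundamental-theorem-of-calculus argument for \eqref{E:FracSemigroup-2} that one finds in Henry and Pazy. The paper does not supply its own proof of this lemma at all: it is stated with a direct citation to \cite[Theorem 1.4.3]{henry:81:geometric} and \cite[Theorem 6.13]{pazy:83:semigroups} and used as a black box, which is precisely what you conclude one should do.
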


\begin{lemma}[{\cite[Theorem 1.6.1]{henry:81:geometric}}]\label{L:Embedding}
  Suppose that $\sigma \in [0,1]$ and $1 \le p < \infty$. Then the following
  Sobolev-type embeddings hold:
  \begin{subequations}\label{E:Embedding}
  \begin{eqnarray}\label{E:Embedding-1}
    X_p^\sigma \hookrightarrow W^{k,q}(\Omega)            & \text{when} & k-N/q<2\sigma-N/p, \: q\ge p,\: k\in \{0,1\}, \\
    X_p^\sigma\hookrightarrow C^\theta(\overline{\Omega}) & \text{when} & 0 \le \theta <2\sigma - N/p. \label{E:Embedding-2}
  \end{eqnarray}
  \end{subequations}
\end{lemma}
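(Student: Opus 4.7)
The statement of Lemma 2.2 is quoted verbatim from Henry, \emph{Geometric Theory of Semilinear Parabolic Equations}, Theorem~1.6.1, so at the level of this paper the proof is essentially a citation. Nevertheless, a natural plan for a self-contained argument would proceed by first establishing the sectorial/analytic-semigroup setup, then identifying $X_p^\sigma$ with a Bessel potential space via interpolation, and finally invoking classical Sobolev embeddings.

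First I would verify that $A_p = -\Delta + \mu I$ with homogeneous Neumann boundary conditions is a positive sectorial operator on $L^p(\Omega)$: the condition $\mu>0$ ensures $0$ lies in the resolvent and that the sector of analyticity contains a neighborhood of the negative real axis, so the fractional powers $A_p^\sigma$ are defined through the standard Dunford/Balakrishnan calculus, and $X_p^\sigma=\mathcal{D}(A_p^\sigma)$ becomes a Banach space under the graph norm $\Norm{u}_p+\Norm{A_p^\sigma u}_p$. The endpoints $X_p^0=L^p(\Omega)$ and $X_p^1=\mathcal{D}(A_p)\subset W^{2,p}(\Omega)$ are then standard.

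Next I would identify $X_p^\sigma$ with a Bessel potential space of order $2\sigma$ by complex interpolation. Since the Neumann Laplacian on a smooth bounded domain has bounded imaginary powers (Seeley), one has $X_p^\sigma=[L^p(\Omega),\mathcal{D}(A_p)]_\sigma$, and away from the exceptional half-integer values of $2\sigma$ where a Neumann trace becomes meaningful, this interpolation space coincides (with equivalent norms) with $H^{2\sigma,p}(\Omega)$. With this identification in hand, the two stated embeddings~\eqref{E:Embedding-1} and~\eqref{E:Embedding-2} reduce to the classical Sobolev--Morrey embeddings $H^{2\sigma,p}(\Omega)\hookrightarrow W^{k,q}(\Omega)$ when $2\sigma-N/p>k-N/q$ with $q\ge p$, and $H^{2\sigma,p}(\Omega)\hookrightarrow C^\theta(\overline{\Omega})$ when $0\le\theta<2\sigma-N/p$.

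The main technical obstacle in a from-scratch proof is the interpolation identification at the boundary-sensitive exponents, since the Neumann boundary condition encoded in $\mathcal{D}(A_p)$ is inherited by $X_p^\sigma$ only for $2\sigma>1+1/p$. Henry avoids this subtlety altogether by deriving the embeddings directly from the heat-semigroup characterization of $X_p^\sigma$, combined with the smoothing estimate~\eqref{E:FracSemigroup-1} from Lemma~\ref{L:FracSemigroup}; this is the route I would follow if I had to give a fully self-contained argument, and it is already packaged in the cited reference, so in the paper itself I would simply point to Henry's Theorem~1.6.1.
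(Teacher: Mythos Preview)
Your proposal is correct: the paper itself gives no proof of this lemma and simply cites Henry's Theorem~1.6.1, exactly as you recognize at the outset. Your additional sketch of a self-contained argument is sound and goes beyond what the paper provides, but for the purposes of matching the paper's treatment, the citation alone suffices.
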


\begin{lemma}\label{L:T-Nabla}
  Let $\lambda_1$ be the first nonzero eigenvalue of $-\Delta$ in $\Omega$ under
  Neumann boundary conditions. There exists some constant $C > 0$ which depends
  only on $\Omega$, such that for all $p \in (1, \infty)$ and $t > 0$, it holds
  that
  \begin{equation}\label{E:T-Nabla}
    \Norm{e^{-tA_p}\nabla \cdot \phi}_p
    \le C \left(1+t^{-1/2}\right) e^{-(\lambda_1+\mu) t} \Norm{\phi}_p
  \end{equation}
  where $\phi \in \big(C_0^\infty({\Omega})\big)^N$. Consequently, for all
  $t>0$, the operator $e^{-tA_p}\nabla\cdot$ possesses a uniquely determined
  extension to an operator from $L^p(\Omega)$ into $L^p(\Omega)$, with norm
  controlled according to~\eqref{E:T-Nabla}.
\end{lemma}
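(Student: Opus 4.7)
The plan is to use duality to reduce the claim to a gradient estimate for the semigroup. For $\phi \in (C_0^\infty(\Omega))^N$ and $\psi \in C^\infty(\overline{\Omega})$, the self-adjointness of $A$ together with integration by parts (with no boundary term, since $\phi$ has compact support) gives
\[
  \int_\Omega \bigl(e^{-tA_p}\nabla\cdot\phi\bigr)\psi\,dx
  = \int_\Omega \nabla\cdot\phi\cdot e^{-tA_{p'}}\psi\,dx
  = -\int_\Omega \phi\cdot\nabla\bigl(e^{-tA_{p'}}\psi\bigr)\,dx,
\]
where $p'$ is the H\"older conjugate of $p$. Taking the supremum over $\|\psi\|_{p'}\le 1$, the bound \eqref{E:T-Nabla} reduces to showing the operator estimate
\[
  \bigl\|\nabla e^{-tA_{p'}}\psi\bigr\|_{p'}
  \le C\bigl(1+t^{-1/2}\bigr)\,e^{-(\lambda_1+\mu)t}\|\psi\|_{p'},
  \qquad t>0,\ \psi\in L^{p'}(\Omega).
\]

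To exploit that $\nabla$ annihilates constants, I would decompose $\psi=\bar\psi+\psi_0$, where $\bar\psi=|\Omega|^{-1}\int_\Omega\psi\,dx$ and $\psi_0=\psi-\bar\psi$. Since $e^{-tA}\bar\psi=e^{-\mu t}\bar\psi$ is spatially constant, $\nabla e^{-tA}\psi=\nabla e^{-tA}\psi_0$, and $\|\psi_0\|_{p'}\le 2\|\psi\|_{p'}$; moreover, the mean-zero subspace $L^{p'}_0$ is invariant under $e^{-tA}$. I would then split into short- and long-time regimes. For $0<t\le 1$, Lemma~\ref{L:FracSemigroup} with $\sigma=1/2$, combined with the Seeley-type identification $X_{p'}^{1/2}\hookrightarrow W^{1,p'}(\Omega)$ for the Neumann realization, yields $\|\nabla e^{-tA}\psi_0\|_{p'}\le Ct^{-1/2}\|\psi_0\|_{p'}$, and the factor $e^{-(\lambda_1+\mu)t}$ can be inserted trivially on $(0,1]$ by absorbing $e^{\lambda_1+\mu}$ into $C$. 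For $t>1$, I would factor $e^{-tA}=e^{-A}\circ e^{-(t-1)A}$, apply the small-$t$ bound at $t=1$ to the outer gradient factor, and reduce to the spectral-gap decay of $e^{-(t-1)A}$ restricted to $L^{p'}_0$ at rate $\lambda_1+\mu$.

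The main obstacle is this last ingredient, namely establishing
\[
  \bigl\|e^{-sA}\psi_0\bigr\|_{p'}
  \le C\,e^{-(\lambda_1+\mu)s}\|\psi_0\|_{p'},
  \qquad s\ge 0,\ \psi_0\in L^{p'}_0,
\]
for $p'\in(1,\infty)$. In $L^2$ it is immediate from the spectral theorem applied to the self-adjoint Neumann realization of $-\Delta+\mu I$, whose spectrum on $L^2_0$ equals $\{\mu+\lambda_k\}_{k\ge 1}\subset[\mu+\lambda_1,\infty)$. To transfer the gap to $L^{p'}$, I would use the ultracontractivity of the Neumann heat semigroup on a smooth bounded domain: for $s\ge 1$, factor $e^{-sA}=e^{-A/2}\circ e^{-(s-1)A}\circ e^{-A/2}$, so the outer pieces map $L^{p'}_0\leftrightarrow L^2_0$ boundedly while the middle piece contributes the exponential factor $e^{-(\lambda_1+\mu)(s-1)}$ from the $L^2$ gap. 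Combining the short- and long-time estimates would then yield \eqref{E:T-Nabla} on the dense subset $(C_0^\infty(\Omega))^N$, and the extension to $L^p(\Omega)^N$ follows by density, giving the asserted uniquely determined extension.
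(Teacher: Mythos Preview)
The paper's ``proof'' is simply a one-line citation to Lemma~1.3(iv) of Winkler's \emph{Aggregation vs.\ global diffusive behavior} paper, so you are in effect supplying what the paper outsources. Your duality reduction to a gradient bound for $\nabla e^{-tA_{p'}}$, the mean-zero decomposition, the short-time/long-time splitting, and the ultracontractive transfer of the $L^2$ spectral gap to $L^{p'}$ are all standard and correct; this is essentially how such estimates are proved, and it is more informative than the bare citation.

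There is, however, one point you should address: the lemma asserts a constant $C$ depending \emph{only on~$\Omega$}, uniform over all $p\in(1,\infty)$. Your short-time argument invokes Lemma~\ref{L:FracSemigroup} at $\sigma=1/2$ together with the Seeley identification $X_{p'}^{1/2}\simeq W^{1,p'}$; both carry constants that, as stated, depend on $p'$, so what you have written yields \eqref{E:T-Nabla} only with a $p$-dependent constant. (Your long-time ultracontractive argument can be made uniform in $p$, since the $L^1\!\to\!L^\infty$ smoothing of $e^{-A/2}$ controls all intermediate exponents simultaneously.) To recover the claimed uniformity you would either need to argue that the relevant constants in Lemma~\ref{L:FracSemigroup} and in the Seeley embedding are bounded uniformly in $p'\in(1,\infty)$, or---closer to Winkler's original argument---replace the abstract $A^{1/2}$ step by the pointwise Gaussian gradient bound $|\nabla_x K(t,x,y)|\le C\,t^{-(N+1)/2}e^{-c|x-y|^2/t}$ for the Neumann heat kernel on a smooth bounded domain, which yields the $L^p\!\to\!L^p$ short-time estimate with $C$ independent of $p$ by direct kernel integration.
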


\begin{proof}
  This lemma is a special case of part (iv)
  of~\cite[Lemma~1.3]{winkler:10:aggregation}.
\end{proof}

\begin{lemma}\label{L:new-lm}
  For any $\sigma > 0$ and $p \in (1, \infty)$, there is $C > 0$ such that for
  any $u \in (L^p(\Omega))^N$, it holds that
  \begin{equation}\label{E:new-lm}
    \norm{A_p^\sigma e^{-A_p t} \nabla \cdot u}_p \le C t^{-\sigma} (1+t^{-1/2}) e^{-(\lambda_1 +\mu) t/2} \Norm{u}_p
    \qquad \text{for all $t>0$.}
  \end{equation}
\end{lemma}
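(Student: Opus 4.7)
The plan is to derive Lemma~\ref{L:new-lm} by factoring the semigroup and combining the two previous lemmas. Specifically, by the semigroup property,
\[
  A_p^\sigma\, e^{-A_p t}\, \nabla\cdot u
  \;=\; \left(A_p^\sigma\, e^{-A_p t/2}\right)\!\left(e^{-A_p t/2}\, \nabla\cdot u\right),
\]
so the strategy is to apply Lemma~\ref{L:T-Nabla} to the inner factor and Lemma~\ref{L:FracSemigroup}~\eqref{E:FracSemigroup-1} to the outer factor, and then multiply the resulting bounds.

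First, I would use Lemma~\ref{L:T-Nabla} (with $t$ replaced by $t/2$) to obtain, for some constant $C_1>0$ depending only on $\Omega$,
\[
  \Norm{e^{-A_p t/2}\,\nabla\cdot u}_p
  \;\le\; C_1\left(1+(t/2)^{-1/2}\right) e^{-(\lambda_1+\mu)t/2}\, \Norm{u}_p.
\]
Here I rely on the fact that, by the last assertion of Lemma~\ref{L:T-Nabla}, the operator $e^{-A_p t/2}\nabla\cdot$ admits a bounded extension from $L^p(\Omega)$ into $L^p(\Omega)$, so this inequality applies to arbitrary $u\in(L^p(\Omega))^N$. Next, since $e^{-A_p t/2}$ maps $L^p(\Omega)$ into the domain of every fractional power of $A_p$, the image lies in $\mathcal{D}(A_p^\sigma)$, and I may apply Lemma~\ref{L:FracSemigroup}~\eqref{E:FracSemigroup-1}. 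Choosing any $\delta\in(0,\mu)$ there gives a constant $C_2=C_{\sigma,p,\delta}$ such that
\[
  \Norm{A_p^\sigma\, e^{-A_p t/2}\, w}_p
  \;\le\; C_2\,(t/2)^{-\sigma} e^{-\delta t/2}\, \Norm{w}_p
  \qquad\text{for all } w\in L^p(\Omega).
\]

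Combining these two estimates with $w = e^{-A_p t/2}\nabla\cdot u$ and absorbing the factors of $2$ into a new constant $C>0$, I would arrive at
\[
  \Norm{A_p^\sigma\, e^{-A_p t}\,\nabla\cdot u}_p
  \;\le\; C\, t^{-\sigma}\left(1+t^{-1/2}\right)
  e^{-(\delta+\lambda_1+\mu)t/2}\, \Norm{u}_p.
\]
Since $\delta>0$, we have $e^{-(\delta+\lambda_1+\mu)t/2}\le e^{-(\lambda_1+\mu)t/2}$, which yields the stated bound~\eqref{E:new-lm}.

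I do not anticipate a serious obstacle here: both building blocks are already in place, and the only modestly delicate point is the justification that $e^{-A_p t/2}\nabla\cdot u$ lies in $\mathcal{D}(A_p^\sigma)$ so that the fractional power estimate applies. This is handled automatically by the smoothing property of the analytic semigroup once Lemma~\ref{L:T-Nabla} supplies the $L^p\to L^p$ extension of $e^{-A_p t/2}\nabla\cdot$. The split $t = t/2 + t/2$ is essential: it simultaneously produces the singular factor $t^{-\sigma}$ from the fractional power, the mildly singular factor $(1+t^{-1/2})$ from the divergence, and the full exponential decay at rate $(\lambda_1+\mu)/2$ by pooling the decay contributions from both halves.
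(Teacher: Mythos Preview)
Your proposal is correct and follows essentially the same approach as the paper: split $e^{-A_p t}=e^{-A_p t/2}e^{-A_p t/2}$, apply Lemma~\ref{L:T-Nabla} to the inner factor and Lemma~\ref{L:FracSemigroup} to the outer, then combine. The paper presents the two steps in the opposite order and silently drops the $e^{-\delta t/2}$ factor (bounding it by $1$), whereas you carry it along and discard it at the end; the arguments are otherwise identical.
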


\begin{proof}
  Note that
  \begin{align*}
    \|A_p^\sigma e^{-A_p t}\nabla \cdot u\|_p
    & =\|A_p^\sigma e^{-A_pt/2} e^{-A_p t/2}\nabla \cdot u\|_p             \\
    & \le C t^{-\sigma} \|e^{-A_p t/2}\nabla \cdot u\|_p \quad              & \text{by Lemma~\ref{L:FracSemigroup}} \\
    & \le C t^{-\sigma} (1+t^{-1/2}) e^{-(\lambda_1+\mu) t/2} \|u\|_p\quad  & \text{by Lemma~\ref{L:T-Nabla}}
  \end{align*}
  for all $t>0$. The lemma is thus proved.
\end{proof}

\subsection{Local existence of classical solutions}\label{SS:Local-Exist}

In this subsection, we outline the main steps in the proof of
Proposition~\ref{P:local-existence}.

\begin{proof}[Proof of Proposition~\ref{P:local-existence}]
  The proof follows standard semigroup theory and a fixed-point argument (see,
  e.g., \cite[Theorem 3.1]{horstmann.winkler:05:boundedness}, \cite[Theorem
  2.1]{tello.winkler:07:chemotaxis}, \cite[Theorems 1.2 and
  1.3]{salako.shen:17:global}). For the reader's convenience and for future
  reference, we present an outline of the proof in the following steps. \medskip

  \noindent\textbf{Step 1.~} Suppose that $u_0$
  satisfies~\eqref{E:initial-cond-PE}. For given $T>0$, set
  \begin{align*}
    X_T \coloneqq C\left([0,T],C(\overline{\Omega})\right) \quad \text{with} \quad
    \Norm{u}_{X_T} \coloneqq \max_{t\in[0,T]} \Norm{u(t)}_{\infty}.
  \end{align*}
  Recall that the semigroup operator $T(t)$ is defined in
  Section~\ref{SS:Local-Exist}. For any given $u \in X_T$, define
  \begin{align*}
    G(u)(t)
    & \coloneqq T(t)u_0 - \int_0^t T(t-s)\nabla\cdot(\chi(v(s)) u^m(s)\nabla v(s))ds + \int_0^t T(t-s)u(s)(a+\mu-bu^\alpha(s))ds \\
    & = G_0(u_0) - G_1(u)(t) + G_2(u)(t),
  \end{align*}
  where we use the notation $\chi(v) \coloneqq \chi_0(1+v)^{-\beta}$ and
  \begin{equation}\label{E:Resolvent}
    v(t) \coloneqq (\mu I-\Delta)^{-1} (\nu u^\gamma(t))
    = \nu \int_0^\infty e^{-As} u^{\gamma}(t)ds.
  \end{equation}
  In this step, we provide some estimates for $G_1(u)$, $G_2(u)$, and $v$, which
  will be used in later steps as well as in later sections.

  Let $u \in X_T$ and set $R \coloneqq \Norm{u}_{X_T}$. First,
  by choosing $1/2 < \sigma_0 < 1$ and $p_0>\frac{N}{2\sigma_0 -1}$, we see that
  Lemma~\ref{L:Embedding} implies
  \begin{align}\label{E:bound-v+grad-v}
    \Norm{v(s,\cdot)}_\infty+ \Norm{\nabla v(s,\cdot)}_\infty
    & \le C \Norm{A_{p_0}^{\sigma_0}  v(s,\cdot)}_{p_0}\nonumber                                \\
    & \le C \Norm{u^\gamma(s,\cdot)}_{p_0} \int_0^\infty t^{-\sigma_0}e^{-\delta t}dt \nonumber \\
    & \le C R^\gamma, \quad \forall\, t\in [0, T].
  \end{align}

  Next, by Lemmas~\ref{L:FracSemigroup} and~\ref{L:new-lm}, for any $p > 1$, $0
  < \sigma < 1/2$, and $0 < \delta < \mu/2$, we see that for all $t \in [0,T]$,
  \begin{equation}\label{E:G-1-eq1}
    \Norm{A_p^\sigma G_1(u)(t)}_p
    \le C \int_0^ t (t-s)^{-\sigma} \left(1+(t-s)^{-1/2}\right)e^{-\delta(t-s)} \Norm{u^m(s,\cdot)}_p \Norm{\nabla v(s,\cdot)}_\infty ds,
  \end{equation}
  and
  \begin{equation}\label{E:G-2-eq1}
    \Norm{A_p^\sigma G_2(u)(t)}_p
    \le C \int_0^t (t-s)^{-\sigma} e^{-\delta (t-s)} \left(\Norm{u(s,\cdot)}_p + \Norm{u^{1+\alpha}(s,\cdot)}_p\right) ds.
  \end{equation}

  Now, for $p > 1$, $0 < \sigma < \tilde{\sigma} < 1/2$, $0 < \delta < \mu/2$,
  and $t_1, t_2 \in [0,T]$ with $t_1 < t_2$, we have
  \begin{align}\label{E:G-1-eq2}
    & \Norm{A_p^\sigma\Big(G_1(u)(t_2)-G_1(u)(t_1)\Big)}_p\nonumber                                                                                                                                                               \\
    & \le C \Norm{\int_0^ {t_1} A^\sigma \Big(T(t_2-t_1)-I\Big) T(t_1-s) \nabla \cdot(\chi(v(s))u^m(s)\nabla v(s)) ds}_p\nonumber                                                                                                 \\
    & \quad + C \Norm{\int_{t_1}^{t_2} A^\sigma T(t_2-s)  \nabla \cdot(\chi(v(s))u^m(s)\nabla v(s)) ds}_p   \nonumber                                                                                                             \\
    & \le C (t_2-t_1)^{\tilde \sigma-\sigma}\int_0^ {t_1} \Norm{A^{\tilde \sigma} T(t_1-s) \nabla \cdot(\chi(v(s))u^m(s)\nabla v(s))}_p ds  \quad \text{(by \eqref{E:FracSemigroup-2})}\nonumber \\
    & \quad + C \int_{t_1}^{t_2} \Norm{A^\sigma T(t_2-s) \nabla \cdot(\chi(v(s))u^m(s)\nabla v(s))}_p ds  \nonumber                                                                                                               \\
    & \le C (t_2-t_1)^{\tilde \sigma-\sigma} \int_0^{t_1} (t_1-s)^{-\tilde \sigma}  \left(1+(t_1-s)^{-1/2}\right)e^{-\delta(t_1-s)} \Norm{u^m(s,\cdot)}_p \Norm{\nabla v(s,\cdot)}_\infty ds\nonumber                          \\
    & \quad + C \int_{t_1}^{t_2}(t_2-s)^{-\sigma}  \left(1+(t_2-s)^{-1/2}\right)e^{-\delta(t_2-s)}\Norm{u^m(s,\cdot)}_p \Norm{\nabla v(s,\cdot)}_\infty ds  \quad \text{(by \eqref{E:G-1-eq1})}.
     \end{align}
  Similarly, we have
  \begin{align}\label{E:G-2-eq2}
    \MoveEqLeft \Norm{A_p^\sigma\Big(G_2(u)(t_2)-G_2(u)(t_1)\Big)}_p\nonumber\\
    & \le C \left(t_2-t_1\right)^{\tilde \sigma-\sigma } \int_0^{t_1} \Norm{A^{\tilde \sigma} T(t_1-s) u(s)\big(a+\mu-b u^\alpha(s)\big)}_p ds\nonumber                               \\
    & \quad + C \int_{t_1}^{t_2} \Norm{A^\sigma T(t_2-s) u(s)\big(a+\mu - b u^\alpha(s)\big)}_p ds \nonumber                                                                          \\
    & \le C(t_2-t_1)^{\tilde \sigma-\sigma} \int_0^{t_1}   e^{-\delta (t_1-s)} (t_1-s)^{-\tilde \sigma} \left(\Norm{u(s,\cdot)}_p + \Norm{u^{1+\alpha}(s,\cdot)}_p\right) ds\nonumber \\
    & \quad + C\int_{t_1}^{t_2}  e^{-\delta (t_2-s)} (t_2-s)^{-\sigma} \left(\Norm{u(s,\cdot)}_p + \Norm{u^{1+\alpha}(s,\cdot)}_p\right) ds.
  \end{align}
  \medskip

  \noindent\textbf{Step 2.} In this step, we demonstrate that $G(u)$ is well
  defined in $X_T$, that is, $G(u) \in X_T$. \medskip

  We first show that
  \begin{equation}\label{E:local-pf-eq1}
    G(u)(s)\in C(\overline\Omega) \quad \text{for all $s\in [0,T]$} \quad \text{and} \quad
    \sup_{s\in [0,T]} \Norm{G(u)(s)}_\infty<\infty.
  \end{equation}
  To this end, we choose $\sigma > 0$ and $p > 1$ such that
  \begin{align}\label{E:sigma-p}
    0 < \sigma < 1/4 \quad \text{and} \quad  2\sigma-N/p > 0.
  \end{align}
  Then, we can apply the embedding in~\eqref{E:Embedding-2} with $\theta =0$ to
  see that for any $t\in[0,T]$,
  \begin{align}\label{E:le-s1-gineq}
    \Norm{G(u)(t)}_{\infty}
    \le & \Norm{T(t)u_0}_{\infty}+ C\|A_p^\sigma G_1(u)(t)\|_p  +\|A_p^\sigma G_2(u)(t)\|_p .
  \end{align}
  By~\eqref{E:bound-v+grad-v} and~\eqref{E:G-1-eq1}, we get
  \begin{align}\label{E:le-s1-eq1}
    \Norm{A_p^\sigma G_1(u)(t)}_p
    \le C R^{m+\gamma} \left(T^{1-\sigma} +T^{1/2-\sigma} \right).
  \end{align}
  Applying~\eqref{E:G-2-eq1} yields
  \begin{align}\label{E:le-s1-eq2}
    \Norm{A_p^\sigma G_2(u)(t)}_p \le CRT^{1-\sigma}+ CR^{1+\alpha}T^{1-\sigma}.
  \end{align}
  Note that $T(t)u_0\in C(\overline \Omega)$ and $G_0 = \Norm{T(t)u_0}_\infty
  \le C\Norm{u_0}_\infty$ for all $t\in [0,T]$. Then by~\eqref{E:le-s1-gineq},
  \eqref{E:le-s1-eq1}, and~\eqref{E:le-s1-eq2}, we
  obtain~\eqref{E:local-pf-eq1}.

  Next, we show that for any $t_1, t_2 \in [0,T]$,
  \begin{equation}\label{E:local-pf-eq2}
    \lim_{ t_2-t_1\to 0} \Norm{G(u)(t_2)-G(u)(t_1)}_\infty = 0.
  \end{equation}
  To this end, let $t_1, t_2 \in [0,T]$ with $t_2 > t_1$. Choose $\sigma$ and
  $p$ satisfying \eqref{E:sigma-p}. Then by~\eqref{E:bound-v+grad-v},
  \eqref{E:G-1-eq2}, and~\eqref{E:G-2-eq2} with $\tilde{\sigma}=2\sigma$, we
  obtain
  \begin{align*}
     \MoveEqLeft \Norm{G(u)(t_2)-G(u)(t_1)}_\infty                                                                                                               \\
     & \le  \Norm{(T(t_2)-T(t_1))u_0}_\infty + \Norm{A_p^\sigma\Big(G_1(u)(t_2)-G_1(u)(t_1)\Big)}_p  + \Norm{A_p^\sigma\Big(G_2(u)(t_2)-G_2(u)(t_1)\Big)}_p      \\
     & \le  \Norm{(T(t_2)-T(t_1))u_0}_\infty + C R^{m+\gamma} (t_2-t_1)^\sigma + C R^{m+\gamma}\left( (t_2-t_1)^{1-\sigma}+(t_2-t_1)^{\frac{1}{2}-\sigma}\right) \\
     & \quad +C (R+R^{1+\alpha}) \left( (t_2-t_1)^\sigma+(t_2-t_1)^{1-\sigma}\right).
  \end{align*}
  By the continuity of $T(t)u_0$ in $t\ge0$ and the boundedness of $T(t)$ in $t
  \ge 0$, we have that
  \begin{align*}
    \Norm{T(t_1) (T(t_2-t_1)-I) u_0}_\infty\to 0 \quad {\rm as}\,\,\, t_2-t_1\to 0.
  \end{align*}
  The limit in \eqref{E:local-pf-eq2} then follows. From~\eqref{E:local-pf-eq1}
  and~\eqref{E:local-pf-eq2}, we can conclude that $G(u) \in X_T$. \medskip

  \noindent\textbf{Step~3.} For given $u_0$
  satisfying~\eqref{E:initial-cond-PE}, fix two numbers $r$ and $R$ such that
  \begin{align*}
    0 < r < \inf_{x\in\Omega}u_0(x)\le \sup_{x\in\Omega} u_0(x) < R.
  \end{align*}
  In this step, we claim that there is $T \coloneqq T(r,R) > 0$ such that $G$
  maps
  \begin{align*}
    S_{r,R,T} \coloneqq \left\{u\in X_T\, :\, r\le u(t)\le R \quad \forall\, t\in[0,T]\right\}
  \end{align*}
  into itself, and is a contraction. \medskip

  First, for any $u \in S_{r,R,T}$, by the arguments in Step~2, for any
  $t\in[0,T]$, we have
  \begin{align*}
    \Norm{G(u)(t)}_\infty \le & \Norm{T(t) u_0}_\infty+ C R^{m+\gamma} \left(T^{1-\sigma}+T^{1/2-\sigma} \right)+ CT^{1-\sigma} (R+ R^{1+\alpha})
  \end{align*}
  and
  \begin{align*}
    \Norm{G(u)(t)}_\infty \ge & \Norm{T(t) u_0}_\infty- C R^{m+\gamma}
    \left(T^{1-\sigma}+T^{1/2-\sigma} \right)- CT^{1-\sigma} (R+ R^{1+\alpha}).
  \end{align*}
  This together with $\Norm{T(t)u_0}_\infty \to \Norm{u_0}_\infty \: (>r)$ as $t
  \to 0_+$ implies that there is $T_1>0$ such that for all $T \in (0, T_1]$ and
  $u \in S_{r,R,T}$,
  \begin{align*}
    r\le G(u)(t)\le R,\quad \text{for all $t\in [0,T]$.}
  \end{align*}
  Therefore, $G$ maps $S_{r,R,T}$ into itself. \medskip

  Second, let $\sigma$ and $p$ be chosen as in~\eqref{E:sigma-p}. Arguing
  similarly to Step~2, we apply~\eqref{E:Embedding-2} to deduce that, for any
  $u_1, u_2 \in S_{r,R,T}$ and $t \in [0,T]$, it holds that
  \begin{align}\label{E:le-s2-mainineq}
    \Norm{G(u_1)(t)-G(u_2)(t)}_\infty
    \le       & C \int_0^t \Norm{A^\sigma T(t-s)\nabla\cdot\left[\left(\chi(v_1(s))-\chi(v_2(s))\right)u_1^m(s)\nabla v_1(s)\right]}_pds\nonumber     \\
              & + C \int_0^t \Norm{A^\sigma T(t-s)\nabla\cdot\left[\chi(v_2(s))\left(u_1^m(s)-u_2^m(s)\right)\nabla v_1(s)\right]}_pds\nonumber       \\
              & + C \int_0^t \Norm{A^\sigma T(t-s)\nabla\cdot\left[\chi(v_2(s))u_2^m(s)\left(\nabla v_1(s)-\nabla v_2(s)\right) \right]}_pds\nonumber \\
              & + C (a+\mu) \int_0^t \Norm{A^\sigma T(t-s)\left[u_1(s)-u_2(s)\right]}_pds\nonumber                                                    \\
              & + C \int_0^t \Norm{A^\sigma T(t-s)\left[u_1^{1+\alpha}(s)-u_2^{1+\alpha}(s)\right]}_pds\nonumber                                      \\
    \eqqcolon & H_{1,1} + H_{1,2} + H_{1,3} + H_2+ H_3.
  \end{align}
  To estimate the terms in~\eqref{E:le-s2-mainineq}, we note first that $v_i$
  solves $(\mu I-\Delta)v_i=\nu u_i^\gamma$ with Neumann boundary conditions.
  For $H_{1,1}$, since $\chi(\cdot)$ function is a smooth function with bounded
  derivative, by the Mean Value Theorem,
  \begin{align*}
    \Norm{\chi(v_1)-\chi(v_2)}_{X_T}
    \le C \Norm{v_1-v_2}_{X_T}
     =  C \nu \Norm{(\mu I-\Delta)^{-1}(u_1^\gamma-u_2^\gamma)}_{X_T}
    \le C \Norm{u_1^\gamma-u_2^\gamma}_{X_T}.
  \end{align*}
  For $H_{1,3}$, by the elliptic regularity for $(\mu I-\Delta)^{-1}$,
  $\Norm{\nabla v_1-\nabla v_2}_{X_T} \le C
  \Norm{u_1^\gamma-u_2^\gamma}_{X_T}$. Applying the Mean
  Value Theorem to the power function $g(\xi)=\xi^\gamma$ on the interval
  $[r,R]$, we obtain
  \[
    \Norm{u_1^\gamma-u_2^\gamma}_{X_T}
    \le C \left( \max_{\xi \in [r,R]} \xi^{\gamma-1} \right) \Norm{u_1-u_2}_{X_T}
    < \infty.
  \]
  Similar applications of the Mean Value Theorem to $f(\xi) = \xi^m$ and $h(\xi)
  = \xi^{1+\alpha}$ yield the Lipschitz constants $\max_{\xi\in[r,R]} \xi^{m-1}$
  and $\max_{\xi\in[r,R]} \xi^\alpha$, respectively. Combining these bounds with
  the uniform estimates $\Norm{u_i}_\infty \le R$, $\Norm{\nabla v_i}_\infty \le
  C R^\gamma$, and the estimates for the operator $A^\sigma T(t-s)\nabla$
  in~\eqref{E:new-lm} then leads to the inequalities in~\eqref{E:le-s2-H1}
  below:
  \begin{subequations}\label{E:le-s2-H1}
    \begin{align}
      H_{1,1} & \le C \left( \max_{\xi \in [r,R]} \xi^{\gamma-1}\right) R^{m+\gamma} \left[T^{1-\sigma}+T^{\frac{1}{2}-\sigma}\right]\Norm{u_1-u_2}_{X_T}, \\
      H_{1,2} & \le C \left( \max_{\xi \in [r,R]} \xi^{m-1}\right) R^\gamma \left[T^{1-\sigma}+T^{\frac{1}{2}-\sigma}\right]\Norm{u_1-u_2}_{X_T},          \\
      H_{1,3} & \le C \left( \max_{\xi \in [r,R]} \xi^{\gamma-1}\right) R^m \left[T^{1-\sigma}+T^{\frac{1}{2}-\sigma}\right]\Norm{u_1-u_2}_{X_T}.
    \end{align}
  \end{subequations}
  As for $H_2$ and $H_3$, we instead apply~\eqref{E:FracSemigroup-1} and a
  similar argument as the one used in Step 2 to obtain
  \begin{equation}\label{E:le-s2-H23}
    H_2 \le C T^{1-\sigma}\Norm{u_1-u_2}_{X_T},  \quad \text{and} \quad
    H_3 \le C \left(\max_{\xi \in [r,R]}\xi^{\alpha}\right) T^{1-\sigma}\Norm{u_1-u_2}_{X_T}.
  \end{equation}
  Plugging the estimates in~\eqref{E:le-s2-H1} and~\eqref{E:le-s2-H23} back
  into~\eqref{E:le-s2-mainineq}, there are $0<\kappa<1$ and $T_2 > 0$ such that
  for any $0 < T \le T_2$ and $u_1, u_2 \in S_{r,R,T}$,
  \begin{align*}
    \Norm{G(u_1) - G(u_2)}_{X_T} \le \kappa \Norm{u_1 - u_2}_{X_T}.
  \end{align*}
  Therefore, $G$ is a contraction. The claim then follows with $T = \min\{T_1,
  T_2\}$. \medskip

  As a direct consequence of the contraction mapping principle, there is a
  unique $u \in S_{r,R,T}$ such that $G(u) = u$. In other words, there is a
  unique solution to the system
  \begin{equation}\label{E:local-pf-eq3}
    \begin{dcases}
      u(t,\cdot;u_0) = T(t)u_0-\int_0^t T(t-s)\nabla\cdot(\chi(v(s,\cdot;u_0)) u^m(s,\cdot;u_0)\nabla v(s,\cdot;u_0))ds \cr
      \quad\qquad\qquad  + \int_0^t T(t-s)u(s,\cdot;u_0)(a+\mu-bu^\alpha(s,\cdot;u_0))ds, \cr
      v(t,\cdot;u_0) = (\mu I-\Delta)^{-1}(\nu u^\gamma(t,\cdot;u_0)),
    \end{dcases}
  \end{equation}
  for $t \in (0,T]$. Note that $v$ is determined via $u$. \medskip

  \noindent\textbf{Step~4.~} Repeating the above two steps, we can extend the
  solution $(u(t,x;u_0), v(t,x;u_0))$ of~\eqref{E:local-pf-eq3} to a maximal
  interval $(0, T_{\max}(u_0))$. Along the way, the following property is
  preserved:
  \begin{equation}\label{E:local-pf-eq4}
    u(t,x;u_0)>0\quad \text{for all $t\in [0,T_{\max}(u_0))$ and $x\in\overline\Omega$}.
  \end{equation}
  Moreover, if $T_{\max}(u_0) < \infty$, then one of the cases
  in~\eqref{E:local-Alternative} must occur. \medskip

  \noindent\textbf{Step~5.~} Following the arguments of the proof of Theorem~1.1
  in~\cite{issa.shen:17:dynamics}, we can further show that $(u(t,x;u_0),
  v(t,x;u_0))$, as obtained in the previous steps, is indeed a classical
  solution of~\eqref{E:main-PE}. Expanding the divergence term
  in~\eqref{E:main-PE}, we see that $u = u(t,x;u_0)$ satisfies
  \begin{equation}\label{E:local-pf-eq7}
    \begin{dcases}
      \p_t u =\Delta u-m\chi_0 \frac{u^{m-1}}{(1+v)^\beta}\nabla u\cdot\nabla v +\chi_0\beta \frac{u^m}{(1+v)^{\beta+1}}|\nabla v|^2\cr
      \qquad \quad -\chi_0\frac{u^m}{(1+v)^\beta}(\mu v-\nu u^\gamma)+u(a-bu^\alpha ),\quad & x\in\Omega, \cr
      \frac{\partial u}{\partial n}=0,\quad                                                 & x\in\partial\Omega.
    \end{dcases}
  \end{equation}
  Now suppose that $m \ge 1$ and $T_{\max}(u_0)<\infty$, we will
  establish~\eqref{E:local-infty}. Otherwise, if
  \[
    \limsup_{t\to T_{\max}(u_0)-}\sup_{x\in\Omega}u(t,x;u_0) < \infty,
  \]
  then from~\eqref{E:local-Alternative}, we have that
  \begin{align}\label{E:local-contra}
    \liminf_{t \to T_{\max}(u_0)-}\inf_{x\in\Omega}u(t,x;u_0) = 0.
  \end{align}
  From~\eqref{E:bound-v+grad-v}, there is $M > 0$ such that
  \begin{align*}
    \max\left\{\Norm{v}_\infty, \Norm{\nabla v}_\infty \right\} \le M \quad {\rm on}\quad [0,T_{\max}(u_0)).
  \end{align*}
  Hence, we have that the solution of the following ODE is a sub-solution
  of~\eqref{E:local-pf-eq7}:
  \begin{align*}
    \begin{dcases}
      \frac{d\underline u}{dt}=-|\chi_0|\beta M^2 \underline u^m-|\chi_0|\mu M
      \underline u^m -|\chi_0|\nu \underline u^{m+\gamma}+a\underline u-b \underline
      u^{1+\alpha},\cr
      \underline u(0) = \inf_{x\in\Omega} u_0(x) > 0.
    \end{dcases}
  \end{align*}
  Noting that $\underline u(0)\le u_0(x)$, by comparison principle for parabolic
  equations, we see that $u(t,x;u_0)\ge \underline u(t)$ for all $(t,x) \in [0,
  T_{\max}(u_0)) \times \Omega$. The condition $m\ge 1$ implies that $
  \displaystyle \inf_{t\in [0,T_{\max}(u_0))}\underline u(t) > 0$. Hence, $
  \displaystyle \liminf_{t\to T_{\max}(u_0)-} \inf_{x\in\Omega}u(t,x;u_0) > 0$,
  which contradicts~\eqref{E:local-contra}. Therefore, \eqref{E:local-infty}
  holds under the conditions that $T_{\max}(u_0) < \infty$ and $m \ge 1$.
  \medskip

  \noindent\textbf{Step~6.~} In this final step, we claim that the classical
  solutions of~\eqref{E:main-PE} satisfying the conclusions in
  Proposition~\ref{P:local-existence} are unique. The claim follows from the
  same arguments as the proof of Theorem 1.1 in~\cite{issa.shen:17:dynamics}. We
  leave the details for the interested reader. \medskip

  With the above six steps, we complete the whole proof of
  Proposition~\ref{P:local-existence}.
\end{proof}

\subsection{Entropy and bootstrap tools}\label{SS:OtherLemmas}

We collect two tools that we invoke repeatedly across the proof sections below.
The first is an elementary entropy-type bound for the sensitivity coefficient
that provides a uniform estimate in terms of the constant $\Psi_\beta$. The
second packages the bootstrapping mechanism from a single $L^{p_0}$ bound and a
generic differential inequality into a reusable abstract proposition, together
with its chemotaxis corollary applied to our specific setting.

\begin{lemma}\label{L:v-entropy}
  For all $\beta > 0$ and $v > 0$, the following inequality holds with equality
  achieved at $v = 1/\beta$:
  \begin{equation}\label{E:v-entropy}
    \frac{\beta}{(1+v)^{1+\beta}}
    \le \frac{\Psi_\beta}{v},
    \qquad
    \text{with $\Psi_\beta:=\Big(\tfrac{\beta}{1+\beta}\Big)^{1+\beta}$.}
  \end{equation}
  Moreover, the constant $\Psi_\beta$ (see Fig.~\ref{fig:Psi-Theta} for a plot
  of this constant) as a function of $\beta$ is strictly increasing with
  \[
    \lim_{\beta \downarrow 0} \Psi_\beta = 0 \quad \text{and} \quad
    \lim_{\beta \uparrow \infty} \Psi_\beta = \frac{1}{e}.
  \]
\end{lemma}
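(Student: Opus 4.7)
The plan is to reduce the claimed pointwise bound to a one-variable maximization and then read off the sharp constant. Multiplying~\eqref{E:v-entropy} through by $v > 0$, it suffices to show that
\[
  f(v) \coloneqq \frac{\beta v}{(1+v)^{1+\beta}}, \qquad v > 0,
\]
satisfies $\sup_{v>0} f(v) = \Psi_\beta$, with the supremum attained at $v = 1/\beta$.

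First, I would differentiate and simplify to obtain
\[
  f'(v) = \frac{\beta\,(1 - \beta v)}{(1+v)^{2+\beta}},
\]
which shows that $f$ has a unique critical point at $v_\ast = 1/\beta$, is strictly increasing on $(0, v_\ast)$, and is strictly decreasing on $(v_\ast, \infty)$. Since $f(0^+) = 0$ and $f(v) \to 0$ as $v \to \infty$ (because $1+\beta > 1$), the critical point $v_\ast$ is in fact the global maximizer on $(0,\infty)$. Substituting back,
\[
  f(v_\ast) = \frac{1}{(1 + 1/\beta)^{1+\beta}} = \left(\frac{\beta}{1+\beta}\right)^{1+\beta} = \Psi_\beta,
\]
which simultaneously proves the inequality and identifies the equality case $v = 1/\beta$.

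For the monotonicity and limits of $\beta \mapsto \Psi_\beta$, I would pass to logarithms. Set $g(\beta) \coloneqq \log \Psi_\beta = (1+\beta)\bigl[\log\beta - \log(1+\beta)\bigr]$. A direct computation yields $g'(\beta) = \log \tfrac{\beta}{1+\beta} + \tfrac{1}{\beta}$. Substituting $t = 1/\beta > 0$, this rewrites as $t - \log(1+t)$, which is strictly positive for $t > 0$ by the classical inequality $\log(1+t) < t$. Hence $g$, and therefore $\Psi_\beta$, is strictly increasing on $(0, \infty)$. For the two limits: as $\beta \to 0^+$ we have $g(\beta) \to -\infty$ since $\log(\beta/(1+\beta)) \to -\infty$ while $(1+\beta) \to 1$, so $\Psi_\beta \to 0$; as $\beta \to \infty$, $\Psi_\beta = \bigl(1 - 1/(1+\beta)\bigr)^{1+\beta} \to e^{-1}$ by the standard exponential limit.

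Given the completely explicit critical point and the one-line monotonicity argument via $\log(1+t) < t$, I do not anticipate any substantive obstacle; the only bookkeeping is to confirm that $v_\ast$ is the global (not merely local) maximum, which follows from the vanishing of $f$ at both endpoints of $(0,\infty)$.
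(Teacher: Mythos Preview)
Your proof is correct and follows essentially the same approach as the paper: both reduce the inequality to maximizing $h(v)=\beta v/(1+v)^{1+\beta}$, locate the unique critical point at $v=1/\beta$, and then analyze $g(\beta)=\log\Psi_\beta$. The only minor difference is in showing $g'(\beta)>0$: the paper argues via $g''<0$ together with $g'(\beta)\to 0$ as $\beta\to\infty$, whereas your substitution $t=1/\beta$ reducing to $t-\log(1+t)>0$ is a clean and slightly more direct alternative.
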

\begin{proof}
  The claimed inequality is equivalent to $\frac{\beta\,v}{(1+v)^{1+\beta}} \le
  \Psi_\beta$. Define $h(v) \coloneqq \frac{\beta\,v}{(1+v)^{1+\beta}}$ for $v >
  0$. A direct computation gives
  \[
    \frac{h'(v)}{h(v)}
    =\frac{1}{v}-\frac{1+\beta}{1+v}
    =\frac{1-\beta v}{v(1+v)}.
  \]
  Hence, $h'(v) = 0$ if and only if $v = 1/\beta$; moreover $h'(v) > 0$ for $0 <
  v < 1/\beta$ and $h'(v) < 0$ for $v > 1/\beta$. Thus, $v = 1/\beta$ is the
  unique global maximizer of $h$ on $(0,\infty)$, and
  \[
    \sup_{v>0} h(v)=h(1/\beta)
    = \frac{\beta\,(1/\beta)}{\bigl(1+1/\beta\bigr)^{1+\beta}}
    = \Bigl(\frac{\beta}{1+\beta}\Bigr)^{1+\beta}
    = \Psi_\beta .
  \]
  Therefore, $h(v)\le \Psi_\beta$ for all $v > 0$. Finally, let $g(\beta)
  \coloneqq \ln \Psi_\beta = (1+\beta)\ln\left(\frac{\beta}{1+\beta}\right)$.
  Then we see that $g'(\beta) = \frac{1}{\beta} +
  \ln\left(\frac{\beta}{1+\beta}\right)$ and $g''(\beta) =
  -\frac{1}{\beta^2(1+\beta)} < 0$. Therefore, $g$ is strictly concave. Since
  $g'(\beta) \to 0$ as $\beta \to \infty$, we see that $g'(\beta) < 0$ for all
  $\beta > 0$. Hence, $g(\beta)$ is strictly increasing. The two limits of
  $\Psi_\beta$ can be verified by direct computation. This completes the proof
  of Lemma~\ref{L:v-entropy}.
\end{proof}

\begin{lemma}[Abstract $L^p$ bootstrap]\label{L:bootstrap}
  Let $u \ge 0$ be defined on $(0, T) \times \Omega$ with $T \in (0,\infty]$ and
  fix $\rho > 0$. Assume that there exists $p_0 > \max\{1, \rho N/2\}$ such that
  $\displaystyle \sup_{t \in (0, T)} \int_\Omega u^{p_0} < \infty$. Suppose
  moreover that $u$ is regular enough so that for each $p \ge p_0$ the
  quantities in the inequality below with $w \coloneqq u^{p/2}$ are finite.
  Assume further that for every $p \ge p_0$ there exist constants $A = A(p,\rho)
  > 0$, $B = B(p, \rho) > 0$, $K = K(p, \rho) > 0$, and $L = L(p, \rho) \in \R$
  such that for all $t \in (0, T)$,
  \begin{equation}\label{E:bootstrap}
    \frac{1}{p} \cdot \frac{d}{dt} \int_\Omega w^2
    + A \int_\Omega |\nabla w|^2 + B \int_\Omega w^2
    \le K \int_\Omega w^{2 + \frac{2\rho}{p}} + L.
  \end{equation}
  Then for every $p > 1$, we have that $\displaystyle \sup_{t \in (0, T)}
  \int_\Omega u^p < \infty$.
\end{lemma}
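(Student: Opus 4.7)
The plan is to combine a Gagliardo--Nirenberg interpolation applied to $w = u^{p/2}$ with Young's inequality to absorb the term $K\int_\Omega w^{2+2\rho/p}$ into the dissipation $A\int_\Omega|\nabla w|^2$, close the estimate by a Gronwall argument, and iterate in $p$. The base case $1 < p \le p_0$ is immediate from H\"older's inequality on the bounded domain $\Omega$: $\|u\|_p \le |\Omega|^{1/p - 1/p_0}\|u\|_{p_0}$, which transfers the assumed $L^{p_0}$ bound to every smaller $L^p$.

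For $p > p_0$, I would proceed by induction along the doubling sequence $p_k = 2^k p_0$. Assume $M_q \coloneqq \sup_{t \in (0,T)} \int_\Omega u^q < \infty$ for some $q \ge p_0$ and fix any $p \in (q, 2q]$. Setting $w = u^{p/2}$, $q^* = 2 + 2\rho/p$, and $r = 2q/p \in [1, 2)$ produces the identity $\|w\|_r^r = \int_\Omega u^q \le M_q$. The Gagliardo--Nirenberg inequality on the bounded smooth domain yields
\[
  \|w\|_{q^*} \le C\|\nabla w\|_2^{\theta}\|w\|_r^{1-\theta} + C\|w\|_r,
  \qquad
  \theta = \frac{1/r - 1/q^*}{1/r + 1/N - 1/2},
\]
and a direct computation gives the key identity
\[
  \theta\, q^* = \frac{2(p+\rho-q)}{p - q(1 - 2/N)}.
\]
This is strictly less than $2$ if and only if $q > \rho N/2$, which holds since $q \ge p_0 > \rho N/2$.

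Raising the GN inequality to the power $q^*$ and applying Young's inequality with conjugate exponents $2/(\theta q^*)$ and $2/(2 - \theta q^*)$ then produces
\[
  K\int_\Omega w^{q^*} \le \frac{Ap}{2}\int_\Omega|\nabla w|^2 + C(p, q, M_q),
\]
where the residual constant stays finite thanks to $\|w\|_r^r \le M_q$. Inserting this bound into~\eqref{E:bootstrap} multiplied through by $p$, the dissipation absorbs half of the bad term and leaves the linear ODE inequality
\[
  \frac{d}{dt}\int_\Omega w^2 + Bp\int_\Omega w^2 \le C'(p,q,M_q,L).
\]
Gronwall's lemma on $[t_0, T)$ for any $t_0 > 0$, combined with the pointwise finiteness of $\int_\Omega w^2(t_0,\cdot)$ from the regularity hypothesis and the local continuity of $t \mapsto \int_\Omega u^p(t,\cdot)$ near $t = 0$, yields $\sup_{t \in (0,T)}\int_\Omega u^p < \infty$. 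Iterating $q \mapsto 2q$ then exhausts every $p > 1$.

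The main obstacle is the exponent arithmetic establishing $\theta q^* < 2$: this is exactly the point at which the structural hypothesis $p_0 > \rho N/2$ becomes indispensable, since without it Young's inequality cannot transfer the chemotaxis-like term entirely into the dissipation. Keeping track of how the Young constants depend on $p$, $q$, and $M_q$ also requires some bookkeeping, and some care is needed near the endpoints $r = 1$ and $r = 2$ to stay within the range of validity of the GN inequality, but the iteration itself is routine once this one-step bootstrap is in hand.
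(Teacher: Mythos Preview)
Your proposal is correct and follows essentially the same route as the paper's proof: Gagliardo--Nirenberg interpolation of $\|w\|_{2+2\rho/p}$ between $\|\nabla w\|_2$ and a lower $L^r$ norm already controlled by the induction hypothesis, the key exponent computation $\theta q^*<2 \iff q>\rho N/2$, Young's inequality to absorb into the dissipation, Gronwall, and iteration in $p$. The only cosmetic differences are that the paper takes $p\in(p_0,2p_0)$ (open) and iterates via $p_0\mapsto \tfrac{3}{2}p_0$ rather than your doubling $q\mapsto 2q$, but the mechanism is identical.
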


\begin{proof}
  We argue by iteration. Throughout the proof, $C$ denotes a positive constant
  independent of $t \in (0,T)$ whose value may change from line to line.

  Fix $p \in (p_0, 2p_0)$ and let $w = u^{p/2}$. By the
  hypothesis~\eqref{E:bootstrap} with this $p$,
  \begin{align}
    \label{E_:bootstrap-1}
    \frac{1}{p} \cdot \frac{d}{dt} \int_{\Omega} w^2
    \le - A \int_{\Omega}|\nabla w|^2 - B \int_\Omega w^2
       + K  \int_{\Omega} w^{2+\frac{2\rho}{p}} + L.
  \end{align}
  Applying the Gagliardo--Nirenberg inequality (Theorem~1
  in~\cite{nirenberg:66:extended}) with $p_1 = 2 + \frac{2\rho}{p}$ and $q_1 =
  \frac{2p_0}{p} \in (1,2)$ gives
  \begin{equation}\label{E_:bootstrap-2}
    \Norm{w}_{p_1} \le C \Norm{\nabla w}_2^\theta \Norm{w}_{q_1}^{1-\theta}+C \Norm{w}_{q_1},
  \end{equation}
  where $\theta$ satisfies
  \begin{align*}
    \frac{1}{p_1} = \Big(\frac{1}{2}-\frac{1}{N}\Big)\theta+\frac{1-\theta}{q_1} 
    \quad \Longleftrightarrow \quad 
    \theta = \frac{p(p+\rho-p_0)N}{(p+\rho)(pN+2p_0-p_0 N)}.
  \end{align*}
  Note that $\Norm{w}_{q_1}^{q_1} = \Norm{u}_{p_0}^{p_0}$, which is bounded by
  assumption. By~\eqref{E_:bootstrap-2},
  \begin{align*}
    \int_\Omega w^{p_1}
    = \int_\Omega w^{2+\frac{2 \rho}{p}}
    \le C \left(\int_\Omega |\nabla w|^2\right)^{\frac{\theta p_1}{2}} + C.
  \end{align*}
  Moreover,
  \begin{align*}
    \frac{\theta p_1}{2}
    = \theta \cdot \frac{p+\rho}{p}
    = \frac{N(p-p_0)+\rho N}{N(p-p_0)+2p_0} < 1
    \quad \text{since $p_0 > \frac{\rho N}{2}$.}
  \end{align*}
  By Young's inequality,
  \begin{align}\label{E_:bootstrap-3}
    K \int_\Omega w^{2+\frac{2 \rho}{p}}
    \le A \int_\Omega |\nabla w|^2 + C.
  \end{align}
  Combining~\eqref{E_:bootstrap-1} and~\eqref{E_:bootstrap-3} yields
  \begin{align*}
    \frac{1}{p} \cdot \frac{d}{dt} \int_{\Omega} w^2
    \le - B \int_\Omega w^2 + C.
  \end{align*}
  An application of Gr\"onwall's inequality then shows that $\sup_{t \in (0,T)}
  \int_\Omega u^p < \infty$ for $p \in (p_0, 2p_0)$.

  Repeating the argument with $p_0$ replaced by $\frac{3 p_0}{2}$ extends the
  bound to $p \in (p_0, 3p_0)$. Iterating this step shows boundedness for all $p
  > p_0$. Finally, boundedness for $p \in (1, p_0]$ follows by H\"older's
  inequality and the boundedness of $\int_\Omega u^{p_0}$. This completes the
  proof of Lemma~\ref{L:bootstrap}.
\end{proof}

The following corollary will be applied to the proof of
Theorem~\ref{T:weak-cross-diffusion}(2) with $\rho = \gamma$ and
Theorem~\ref{T:strong-logistic-source}(1) (iii)--(iv) with $\rho = \alpha$ and
$T=T_{\max}(u_0)$.

\begin{corollary}[Bootstrap for the chemotaxis cross term]\label{C:bootstrap}
  Let $(u,v)$ be a positive classical solution of~\eqref{E:main-PE} on $(0,T)$
  with $T\in (0,\infty]$ and parameters given in~\eqref{E:parameters}. Assume
  that there is $\rho > 0$ satisfying that for any $\epsilon > 0$ and $p>1$,
  there is $C_{\epsilon,p}$ such that 
  \begin{equation}\label{E:C-bootstrap}
    \chi_0\int_\Omega u^{p+m-2}\frac{\nabla u\cdot\nabla v}{(1+v)^{\beta}}\le
    \epsilon \int_\Omega u^{p-2} |\nabla u|^2 + C_{\epsilon,p}  \int_\Omega u^{p+\rho}\quad \forall\, t \in  (0,T).
  \end{equation}
  Assume also that there is $p_0>\max\{1, \rho N/2\}$ such that $\displaystyle
  \sup_{t\in \left(0, T\right)}\int_\Omega u^{p_0} \le C$. Then for any $p>1$,
  \begin{equation}\label{E:C-boundedness}
   \limsup_{t\to T-}\int_\Omega u^p<\infty.
  \end{equation}
\end{corollary}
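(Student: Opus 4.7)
The plan is to reduce the corollary directly to Lemma~\ref{L:bootstrap} by deriving the abstract differential inequality~\eqref{E:bootstrap} from the standard $L^p$ identity~\eqref{E:u-lp-eq1} combined with the hypothesis~\eqref{E:C-bootstrap}. There is no new analysis required; the work is entirely in matching coefficients into the form demanded by the lemma.

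First, I would fix any $p \ge p_0$ and start from~\eqref{E:u-lp-eq1}, which, for a positive classical solution on $(0,T)$, is valid by the regularity in Definition~\ref{D:classical-sol}(1). Applying the hypothesis~\eqref{E:C-bootstrap} with $\epsilon = \tfrac{1}{2}$ and multiplying by $p-1 > 0$ gives
\[
(p-1)\chi_0\int_\Omega \frac{u^{p+m-2}}{(1+v)^\beta}\nabla u\cdot\nabla v
\le \frac{p-1}{2}\int_\Omega u^{p-2}|\nabla u|^2 + (p-1)C_{1/2,p}\int_\Omega u^{p+\rho}.
\]
Substituting this bound back into~\eqref{E:u-lp-eq1} and setting $w\coloneqq u^{p/2}$ (so that $\int_\Omega u^p=\int_\Omega w^2$, $\int_\Omega u^{p-2}|\nabla u|^2=\tfrac{4}{p^2}\int_\Omega|\nabla w|^2$, and $\int_\Omega u^{p+\rho}=\int_\Omega w^{2+2\rho/p}$), I obtain
\[
\frac{1}{p}\,\frac{d}{dt}\int_\Omega w^2
+\frac{2(p-1)}{p^2}\int_\Omega|\nabla w|^2
\le (p-1)\,C_{1/2,p}\int_\Omega w^{2+2\rho/p}
+ a\int_\Omega w^2 - b\int_\Omega u^{p+\alpha}.
\]

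Next, to bring this into the exact form of~\eqref{E:bootstrap}, I would discard the nonpositive term $-b\int_\Omega u^{p+\alpha}$, choose an arbitrary $B>0$ (say $B=1$), add $B\int_\Omega w^2$ to both sides, and then apply Young's inequality in the elementary form: for every $\delta>0$ there exists $C_\delta>0$ such that $(a+B)\,u^p \le \delta\,u^{p+\rho} + C_\delta$ for all $u\ge 0$ (the maximum of $(a+B)u^p-\delta u^{p+\rho}$ over $u\ge 0$ is finite since $\rho>0$). This yields
\[
\frac{1}{p}\,\frac{d}{dt}\int_\Omega w^2
+ \frac{2(p-1)}{p^2}\int_\Omega|\nabla w|^2
+ B\int_\Omega w^2
\le \Bigl[(p-1)C_{1/2,p}+\delta\Bigr]\int_\Omega w^{2+2\rho/p}
+ C_\delta\,|\Omega|,
\]
which is precisely~\eqref{E:bootstrap} with $A=2(p-1)/p^2>0$, $B>0$, $K=(p-1)C_{1/2,p}+\delta>0$, and $L=C_\delta|\Omega|\in\mathbb{R}$.

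Since $p_0>\max\{1,\rho N/2\}$ and $\sup_{t\in(0,T)}\int_\Omega u^{p_0}<\infty$ by assumption, all the hypotheses of Lemma~\ref{L:bootstrap} are satisfied, and the conclusion $\sup_{t\in(0,T)}\int_\Omega u^p<\infty$ holds for every $p>1$, which gives~\eqref{E:C-boundedness}. The main potential subtlety — and it is only cosmetic — is that the bookkeeping on $B$ and $\delta$ must be chosen so that both $B>0$ and $K>0$ are simultaneously positive, independently of the sign of $\chi_0$ and the size of $a$; the choice above makes this transparent. No novel estimates are needed beyond~\eqref{E:u-lp-eq1}, the hypothesis~\eqref{E:C-bootstrap}, and Young's inequality, so the argument is essentially a direct reduction.
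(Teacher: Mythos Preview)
Your proposal is correct and follows essentially the same route as the paper: start from~\eqref{E:u-lp-eq1}, apply~\eqref{E:C-bootstrap} with $\epsilon=\tfrac12$, substitute $w=u^{p/2}$, discard the $-b\int u^{p+\alpha}$ term, and absorb the lower-order $\int w^2$ contribution into $\int w^{2+2\rho/p}$ plus a constant before invoking Lemma~\ref{L:bootstrap}. The only cosmetic difference is that the paper uses the elementary bound $w^2\le 1+w^{2+2\rho/p}$ (with $\delta=1$) where you invoke Young's inequality with a free parameter $\delta$; the resulting constants $A,B,K,L$ agree up to this choice.
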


\begin{proof}
  Fix $p>1$ and set $w \coloneqq u^{p/2}$. Let $C_{1/2}$ be the constant
  in~\eqref{E:C-bootstrap} corresponding to $\epsilon = 1/2$ (for this $p$).
  By~\eqref{E:u-lp-eq1},
  \begin{align*}
    \frac{1}{p} \cdot \frac{d}{dt} \int_\Omega w^2
      & = -(p-1)\int_\Omega u^{p-2}|\nabla u|^2
          + (p-1)\chi_0\int_\Omega \frac{u^{p+m-2}}{(1+v)^\beta} \nabla u \cdot \nabla v
          + a\int_\Omega u^p - b\int_\Omega u^{p+\alpha} \\
      &\le-(p-1)\int_\Omega u^{p-2}|\nabla u|^2
          + (p-1)\chi_0\int_\Omega \frac{u^{p+m-2}}{(1+v)^\beta} \nabla u \cdot \nabla v
          + (a+1)\int_\Omega w^2 - \int_\Omega w^2.
  \end{align*}
  Using~\eqref{E:C-bootstrap} with $\epsilon = 1/2$ yields
  \begin{align*}
    (p-1)\chi_0\int_\Omega \frac{u^{p+m-2}}{(1+v)^\beta}\nabla u\cdot\nabla v
    \le \frac{p-1}{2}\int_\Omega u^{p-2}|\nabla u|^2
        + (p-1)C_{\frac{1}{2},p}\int_\Omega u^{p+\rho}.
  \end{align*}
  Combining the previous two inequalities yields
  \begin{align*}
    \frac{1}{p} \cdot \frac{d}{dt} \int_\Omega w^2
      &\le -\frac{p-1}{2}\int_\Omega u^{p-2}|\nabla u|^2
          + (p-1)C_{\frac{1}{2},p}\int_\Omega u^{p+\rho}
          + (a+1)\int_\Omega w^2 - \int_\Omega w^2.
  \end{align*}
  Since $\nabla w = \frac{p}{2}u^{\frac{p}{2}-1}\nabla u$, we have
  \begin{equation}\label{E:Norm-Gradient-w}
    \Norm{\nabla w}_2^2 = \frac{p^2}{4} \int_\Omega u^{p-2}|\nabla u|^2.
  \end{equation}
  Moreover, we have $\int_\Omega u^{p+\rho} = \int_\Omega
  w^{2+\frac{2\rho}{p}}$. Hence,
  \begin{align*}
    \frac{1}{p} \cdot \frac{d}{dt} \int_\Omega w^2
    + \frac{2(p-1)}{p^2}\int_\Omega |\nabla w|^2 + \int_\Omega w^2
    \le (p-1)C_{\frac{1}{2},p}\int_\Omega w^{2+\frac{2\rho}{p}} + (a+1)\int_\Omega w^2.
  \end{align*}
  Since $\rho>0$, we have $2+\frac{2\rho}{p}>2$, and therefore $w^2 \le 1 +
  w^{2+\frac{2\rho}{p}}$. It follows that
  \[
    (a+1)\int_\Omega w^2
    \le (a+1)|\Omega| + (a+1)\int_\Omega w^{2+\frac{2\rho}{p}}.
  \]
  Combining the previous two inequalities, we obtain that for all
  $t\in(0,T_{\max})$,
  \begin{align*}
    \frac{1}{p} \cdot \frac{d}{dt} \int_\Omega w^2
    + A\int_\Omega |\nabla w|^2 + B\int_\Omega w^2
    \le K\int_\Omega w^{2+\frac{2\rho}{p}} + L,
  \end{align*}
  where
  \[
    A=\frac{2(p-1)}{p^2},
    \qquad
    B=1,
    \qquad
    K=(p-1)C_{\frac{1}{2},p}+(a+1),
    \qquad
    L=(a+1)|\Omega|.
  \]
  This is exactly~\eqref{E:bootstrap} in Lemma~\ref{L:bootstrap}. Together with
  the condition $\sup_{t\in \left(0, T_{\max}\right)} \int_\Omega u^{p_0} \le
  C$, we can apply Lemma~\ref{L:bootstrap} to yield~\eqref{E:C-boundedness}.
  This completes the proof of Corollary~\ref{C:bootstrap}.
\end{proof}

\subsection{Fundamental estimates for $v$ and $\nabla v$} \label{SS:Fundamental-v}

In this subsection, we present several fundamental estimates for $v$ and $\nabla
v$ in terms of $u$ to be used in the proofs of the main theorems. Throughout
this subsection, we assume that $u_0$ satisfies~\eqref{E:initial-cond-PE}, and
$(u(t,x;u_0), v(t,x;u_0))$ is the unique classical solution of~\eqref{E:main-PE}
on the maximal interval $(0, T_{\max}(u_0))$ with initial condition $u(0,x;u_0)
= u_0(x)$. If no confusion occurs, we may drop $u_0$ in $(u(t,x;u_0),
v(t,x;u_0))$ and $T_{\max}(u_0)$.

\begin{proposition}\label{P:Main2}
  Suppose that $u_0$ satisfies~\eqref{E:initial-cond-PE}. For all $p \in [1,
  \infty]$ and $t\in (0,T_{\max}(u_0))$, it holds
  \begin{equation}\label{E:resolvent-Lp}
    \Norm{v(t, \cdot)}_p \le \frac{\nu}{\mu}\: \Norm{u^\gamma(t, \cdot)}_p.
  \end{equation}
  In particular, if $1\le p<\infty$, then
  \[
    \int_\Omega v^p(t, \cdot)\,dx \le \left(\frac{\nu}{\mu}\right)^{p}
    \int_\Omega u^{\gamma p}(t, \cdot) dx .
  \]
\end{proposition}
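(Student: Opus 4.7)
The plan is to prove the two cases $p\in[1,\infty)$ and $p=\infty$ separately by exploiting the $L^p$-contractivity of the resolvent $(\mu I-\Delta)^{-1}$ under homogeneous Neumann boundary conditions, starting from the pointwise identity $\mu v-\Delta v=\nu u^\gamma$ in $\Omega$ with $\partial_n v=0$ on $\partial\Omega$. Since $u(t,\cdot)>0$ on $\overline\Omega$ by Proposition~\ref{P:local-existence}, the right-hand side $\nu u^\gamma(t,\cdot)$ is strictly positive, and the standard maximum principle for the elliptic operator $\mu I-\Delta$ with Neumann boundary conditions yields $v(t,\cdot)>0$ as well. This positivity allows us to work with $v^{p-1}$ freely as a test function and to avoid any sign issues in what follows.

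For $p\in[1,\infty)$, I would multiply the elliptic equation $\mu v-\Delta v=\nu u^\gamma$ by $v^{p-1}$ and integrate over $\Omega$. Integration by parts using $\partial_n v=0$ gives
\[
  -\int_\Omega v^{p-1}\Delta v
  = (p-1)\int_\Omega v^{p-2}|\nabla v|^2 \ge 0,
\]
so
\[
  \mu\int_\Omega v^p
  \le \mu\int_\Omega v^p + (p-1)\int_\Omega v^{p-2}|\nabla v|^2
  = \nu\int_\Omega u^\gamma\, v^{p-1}.
\]
Applying H\"older's inequality with conjugate exponents $p$ and $p/(p-1)$ to the right-hand side gives
\[
  \nu\int_\Omega u^\gamma\, v^{p-1}
  \le \nu\,\Norm{u^\gamma(t,\cdot)}_p\,\Norm{v(t,\cdot)}_p^{p-1},
\]
and dividing by $\Norm{v(t,\cdot)}_p^{p-1}$ (assuming $v\not\equiv 0$; otherwise the inequality is trivial) yields the desired bound. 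The case $p=1$ is in fact cleaner: integrating the equation directly and using $\int_\Omega\Delta v=0$ from the Neumann condition gives equality $\mu\Norm{v}_1=\nu\Norm{u^\gamma}_1$.

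For $p=\infty$, I would use the elliptic maximum principle directly. By Proposition~\ref{P:local-existence} we have $v(t,\cdot)\in C^2(\overline\Omega)$ in the spatial variable, so $v(t,\cdot)$ attains its maximum over $\overline\Omega$ at some $x_0$. If $x_0\in\Omega$, then $\Delta v(t,x_0)\le 0$; if $x_0\in\partial\Omega$, Hopf's lemma combined with $\partial_n v(t,x_0)=0$ likewise forces $\Delta v(t,x_0)\le 0$ (or one may first apply the strong maximum principle to rule out interior max arguments; either way the sign is correct). Evaluating the equation at $x_0$ gives $\mu\Norm{v(t,\cdot)}_\infty=\mu v(t,x_0)\le \nu u^\gamma(t,x_0)\le \nu\Norm{u^\gamma(t,\cdot)}_\infty$, which is the claim. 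Alternatively, one may simply pass to the limit $p\to\infty$ in the already-established $L^p$ bound, provided one notes that $\Norm{f}_p\to\Norm{f}_\infty$ for continuous $f$ on the bounded domain $\Omega$.

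No single step is truly hard; the mild technical point is the boundary contribution in the integration by parts, which is precisely annihilated by the Neumann condition, and the handling of the maximum principle at boundary points in the $p=\infty$ case, for which the Neumann condition together with Hopf's lemma suffices. The final assertion $\int_\Omega v^p\le (\nu/\mu)^p\int_\Omega u^{\gamma p}$ for $1\le p<\infty$ is a direct restatement of~\eqref{E:resolvent-Lp} after raising to the $p$-th power.
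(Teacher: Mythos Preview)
Your argument is correct but takes a different route from the paper. The paper expresses $v=\nu\int_0^\infty e^{-\mu s}e^{s\Delta}(u^\gamma)\,ds$ via the resolvent formula~\eqref{E:Resolvent}, proves that the Neumann heat semigroup $e^{s\Delta}$ is a contraction on $L^1(\Omega)$ and $L^\infty(\Omega)$ directly from the positivity and mass-preservation of the heat kernel, interpolates by Riesz--Thorin to all $p\in[1,\infty]$, and then integrates in $s$. Your approach is more elementary: you test the elliptic equation against $v^{p-1}$, discard the sign-definite gradient term, and apply H\"older. This avoids semigroup theory and interpolation entirely, at the cost of treating $p=\infty$ separately; the paper's route handles all $p$ in one stroke. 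One small remark: your appeal to Hopf's lemma at a boundary maximum is slightly imprecise as stated (Hopf controls the normal derivative, not the Laplacian; the correct use is that a strict boundary maximum would force $\partial_n v>0$, contradicting the Neumann condition, so the maximum is interior or $v$ is constant), but your alternative of letting $p\to\infty$ in the already-established finite-$p$ bound is clean and sidesteps this issue altogether.
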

\begin{proof}
  Recall the expression of $v$ in terms of $u$ given in~\eqref{E:Resolvent}.
  For $s > 0$ and $x, y \in \Omega$, let $K(s, x, y)$ denote the Neumann heat
  kernel. It is well known that $K(s, x, y)$ is nonnegative and $\int_\Omega
  K(s, x, y) dy = 1$. Fix $s > 0$. The Neumann heat semigroup $e^{s\Delta}$ is
  a contraction on both $L^1(\Omega)$ and $L^\infty(\Omega)$ spaces. Indeed,
  from~\eqref{E:Resolvent}, we have
  \begin{gather*}
    \Norm{e^{s\Delta}f}_1
    = \int_\Omega \left| \int_\Omega K(s,x,y) f(y) dy \right| dx
    \le \int_\Omega \int_\Omega K(s,x,y) |f(y)| dy dx
    = \int_\Omega |f(y)| dy = \Norm{f}_1 \quad \text{and}\\
    \Norm{e^{s\Delta}f}_\infty
    = \sup_{x\in\Omega} \left| \int_\Omega K(s,x,y) f(y) dy \right|
    \le \sup_{x\in\Omega} \int_\Omega K(s,x,y) |f(y)| dy
    \le \Norm{f}_\infty.
  \end{gather*}
  Therefore, by the Riesz-Thorin interpolation theorem (see, e.g., Theorem 6.27
  in~\cite{folland:99:real}), the Neumann heat semigroup $e^{s\Delta}$ is a
  contraction on $L^p(\Omega)$ spaces as well for all $p \in [1, \infty]$.
  Hence,
  \[
    \Norm{v}_p
    \le \nu \int_0^\infty e^{-\mu s} \Norm{e^{s\Delta}(u^\gamma)}_p ds
    \le \nu \int_0^\infty e^{-\mu s} ds \Norm{u^\gamma}_p
    = \frac{\nu}{\mu} \Norm{u^\gamma}_p.
  \]
  This proves~\eqref{E:resolvent-Lp}. When $p$ is finite, we can raise both
  sides to the $p$-th power to yield the integral form.
\end{proof}


\begin{proposition}\label{P:Main1}
  For all $p > 1$, $\beta \ge 0$, and $t\in (0,T_{\max}(u_0))$, it holds that
  \begin{align}\label{E:Main-P0}
    \int_{\Omega}\frac{ |\nabla v|^{2p}}{ v^{p}}
    \le M^* \int_\Omega u^{\gamma p}(t,x;u_0),
  \end{align}
  and
  \begin{equation}\label{E:Main-P1}
    \int_{\Omega} \frac{|\nabla v(t,x;u_0)|^{2p}}{(1+v(t,x;u_0))^{{(1+\beta)}p}}
    \le {\Theta_\beta^p}\, M^* \int_{\Omega} u^{\gamma p}(t,x;u_0),
  \end{equation}
  where the constant $M^* = M^*(N,p,\mu,\nu)$ is given in~\eqref{E:M-star} and
  $\Theta_{\beta}$ is given in~\eqref{E:Psi-Theta} (see
  Fig.~\ref{fig:Psi-Theta} for a plot of $\Theta_\beta$).
\end{proposition}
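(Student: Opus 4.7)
}

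The plan is to reduce the second estimate~\eqref{E:Main-P1} to the first estimate~\eqref{E:Main-P0} by a pointwise comparison, and then to prove~\eqref{E:Main-P0} by an integration-by-parts identity combined with elliptic regularity and Proposition~\ref{P:Main2}.

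First I would handle the reduction. A direct calculus computation shows that $f(v) \coloneqq v(1+v)^{-(1+\beta)}$ has a unique maximizer at $v = 1/\beta$, with maximum value $\beta^\beta(1+\beta)^{-(1+\beta)} = \Theta_\beta$. Hence $v/(1+v)^{1+\beta} \le \Theta_\beta$ pointwise on $(0,\infty)$, so
\[
  \frac{|\nabla v|^{2p}}{(1+v)^{(1+\beta)p}}
  = \frac{|\nabla v|^{2p}}{v^p}\left(\frac{v}{(1+v)^{1+\beta}}\right)^{\!p}
  \le \Theta_\beta^p\,\frac{|\nabla v|^{2p}}{v^p},
\]
and~\eqref{E:Main-P1} follows at once from~\eqref{E:Main-P0}. (Positivity of $v$, which is needed to divide by $v^p$, is immediate from the Neumann Green function representation~\eqref{E:Resolvent} since $u^\gamma>0$.)

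For~\eqref{E:Main-P0}, set $I \coloneqq \int_\Omega |\nabla v|^{2p} v^{-p}$. Writing $|\nabla v|^{2p}v^{-p} = \nabla v\cdot(|\nabla v|^{2(p-1)}v^{-p}\nabla v)$ and integrating by parts with Neumann boundary conditions, then expanding the divergence using $\nabla(|\nabla v|^2)\cdot\nabla v = 2\,D^2v(\nabla v,\nabla v)$, I arrive after standard algebra at the identity
\[
  (p-1)\,I
  = 2(p-1)\int_\Omega \frac{|\nabla v|^{2(p-2)} D^2v(\nabla v,\nabla v)}{v^{p-1}}
  + \int_\Omega \frac{|\nabla v|^{2(p-1)}\,\Delta v}{v^{p-1}}.
\]
Next I would apply Young's inequality (or, equivalently, Hölder with exponents $p/(p-1)$ and $p$) to each term on the right, using the pointwise bound $|D^2v(\nabla v,\nabla v)|\le |D^2v|\,|\nabla v|^2$, to obtain an estimate of the schematic form
\[
  I \le A_{p}\int_\Omega |D^2v|^p + B_p \int_\Omega |\Delta v|^p,
\]
with explicit constants $A_p, B_p$ giving exactly the two bracketed terms in the definition~\eqref{E:M-star} of $M^*$.

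It remains to control the two right-hand integrals by $\int_\Omega u^{\gamma p}$. From the elliptic equation $\Delta v = \mu v - \nu u^\gamma$, the triangle inequality and the bound $(a+b)^p\le 2^{p-1}(a^p+b^p)$ give $\int_\Omega |\Delta v|^p \le 2^{p-1}(\mu^p\int_\Omega v^p + \nu^p\int_\Omega u^{\gamma p})$. Proposition~\ref{P:Main2} then reduces $\int_\Omega v^p$ to $(\nu/\mu)^p\int_\Omega u^{\gamma p}$, yielding $\int_\Omega |\Delta v|^p \le 2^p\nu^p\int_\Omega u^{\gamma p}$. Finally, the elliptic $W^{2,p}$ estimate~\eqref{E:main-inequality-eq-4-0} with the sharp constant $C_{N,p}^*$ gives $\int_\Omega |D^2v|^p \le C_{N,p}^*(\int_\Omega|\Delta v|^p + \int_\Omega v^p)\le C_{N,p}^*\nu^p(2^p+\mu^{-p})\int_\Omega u^{\gamma p}$. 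Assembling these bounds produces~\eqref{E:Main-P0} with $M^*$ as in~\eqref{E:M-star}.

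The main obstacle is the algebraic bookkeeping in the integration-by-parts identity and the optimization of the Young's inequality parameters so that the resulting coefficients match the precise constants appearing in the definition of $M^*$; in particular one has to track the factor $(p-1)$ that appears both as a coefficient in the identity and in the denominator after the Young step, which is where the $1/(p-1)$ piece of $M^*$ originates. Otherwise, the argument is essentially a careful execution of divergence-theorem-based estimates combined with the two inputs already available, namely Proposition~\ref{P:Main2} and the Calderón–Zygmund constant $C_{N,p}^*$.
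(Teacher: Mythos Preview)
Your proposal is correct and follows essentially the same route as the paper's proof: the paper also reduces~\eqref{E:Main-P1} to~\eqref{E:Main-P0} via the pointwise bound $v(1+v)^{-(1+\beta)}\le\Theta_\beta$ (packaged there as Lemma~\ref{L:v-entropy}), then derives the same integration-by-parts identity for $I=\int_\Omega|\nabla v|^{2p}v^{-p}$, applies Young's inequality twice to reach $I\le \frac{8^p}{p}\int_\Omega|D^2v|^p+\frac{2^p}{(p-1)p^p}\int_\Omega|\Delta v|^p$, and finishes with~\eqref{E:main-inequality-eq-4-0} together with Proposition~\ref{P:Main2} exactly as you outline. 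Your identification of where the $1/(p-1)$ factor in $M^*$ originates is also on target.
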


\begin{proof}
  Fix an arbitrary $t \in (0,T_{\max}(u_0))$. We note that~\eqref{E:Main-P0}
  implies \eqref{E:Main-P1}, which follows from Lemma~\ref{L:v-entropy}:
  \begin{equation}\label{E:Main1-Step2}
    \int_{\Omega}\frac{|\nabla v|^{2p}}{(1+v)^{(1+\beta)p}}
    \le \Psi_\beta^p\, \beta^{-p} \int_{\Omega}\frac{|\nabla v|^{2p}}{v^{p}}
    \le \Psi_\beta^p\, \beta^{-p}\,M^*\int_{\Omega} u^{\gamma p}
    = \Theta_\beta^p\, M^*\int_{\Omega} u^{\gamma p}.
  \end{equation}

  It remains to prove~\eqref{E:Main-P0}. The proof follows similar arguments
  to those in~\cite[Proposition~1.3]{kurt.shen:20:finite-time}. Nevertheless,
  here we provide a simpler proof. First, notice that
  \begin{align*}
    \int_{\Omega} \frac{|\nabla v|^{2p}}{v^p}
    & =  \int_{\Omega}  \frac{|\nabla v|^{2p-2}}{v^p} \nabla v\cdot \nabla v                                                                \\
    & = -\int_{\Omega} v \nabla \cdot \Big(|\nabla v|^{2p-2}v^{-p}\nabla v\Big)                                                             \\
    & = -\int_{\Omega} v\nabla \Big (\big(|\nabla v|^2\big)^{p-1} v^{-p}\Big)\cdot\nabla v- \int_{\Omega} v^{-p+1}|\nabla v|^{2p-2}\Delta v \\
    & = p\int_{\Omega}v^{-p}|\nabla v|^{2p} -(p-1)\int_{\Omega} v^{-p+1}|\nabla v|^{2p-4}\nabla (|\nabla v|^2)\cdot \nabla v-\int_{\Omega}v^{-p+1}|\nabla v|^{2p-2}\Delta v.
  \end{align*}
  This implies that
  \begin{align*}
    \int_{\Omega} \frac{|\nabla v|^{2p}}{v^p}
    & =\int_{\Omega} v^{-p+1}|\nabla v|^{2p-4} \nabla(|\nabla v|^2)\cdot \nabla v+\frac{1}{p-1}\int_{\Omega }v^{-p+1}|\nabla v|^{2p-2}\Delta v\nonumber \\
    & \le \int_{\Omega} v^{-p+1}|\nabla v|^{2p-4} \nabla(|\nabla v|^2)\cdot \nabla v +
    \frac{1}{2}\int_{\Omega}|\nabla v|^{2p} v^{-p} +
    \frac{2^{p-1}}{(p-1) p^p}\int_{\Omega }|\Delta v|^p,
  \end{align*}
  where we have applied Young's inequality in the last step. Hence,
  \begin{equation}\label{E:main-inequality-eq-2}
    \int_{\Omega} \frac{|\nabla v|^{2p}}{ v^{p}}
    \le 2 \int_{\Omega} v^{-p+1}|\nabla v|^{2p-4} \nabla(|\nabla v|^2)\cdot \nabla v
        + \frac{2^p}{(p-1) p^p}\int_{\Omega }|\Delta v|^p.
  \end{equation}

  Next, a direct calculation yields
  \begin{equation*}
    \nabla(|\nabla v|^2)\cdot \nabla v=2\nabla v \cdot\big(D^2 v \nabla v\big)\le 2|\nabla v|^2 |D^2 v|.
  \end{equation*}
  This together with \eqref{E:main-inequality-eq-2} and another application of
  Young's inequality implies that
  \begin{align*}
    \int_{\Omega}\frac{ |\nabla v|^{2p}}{ v^{p}}
    & \le 4\int_{\Omega} v^{-p+1}|\nabla v|^{2p-2}|D^2 v| + \frac{2^p}{(p-1)p^p}\int_{\Omega}|\Delta v|^p\nonumber \\
    & \le \frac{1}{2}\int_{\Omega} |\nabla v|^{2p} v^{-p}
          +\frac{2^{p-1} 4^p}{p}\int_\Omega|D^2 v|^p + \frac{2^p}{(p-1) p^p}\int_{\Omega}|\Delta v|^p,
  \end{align*}
  Therefore,
  \begin{equation}\label{E:main-inequality-eq-3}
    \int_{\Omega}\frac{ |\nabla v|^{2p}}{ v^{p}}\le \frac{8^p}{p}\int_{\Omega}|D^2 v|^p + \frac{2^p}{(p-1) p^p}\int_{\Omega} |\Delta v|^p.
  \end{equation}
  By~\eqref{E:main-inequality-eq-4-0} and~\eqref{E:C-star},
  \begin{equation}\label{E:main-inequality-eq-4}
    \int_{\Omega}|D^2 v|^p
    \le C_{N,p}^*\int_\Omega \Big(|\Delta v|^p+v^p\Big)\quad \text{for all $v\in W^{2,p}(\Omega)$}.
  \end{equation}
  Combining~\eqref{E:main-inequality-eq-3} and~\eqref{E:main-inequality-eq-4},
  we have
  \begin{align*}
    \int_{\Omega}\frac{ |\nabla v|^{2p}}{ v^{p}}
    & \le \left(\frac{8^p}{p}C_{N,p}^*+\frac{2^p}{(p-1)p^p}\right)\int_{\Omega}|\Delta v|^p + \frac{8^p}{p}C_{N,p}^*\int_\Omega v^p.
  \end{align*}
  Now, by the second equation of~\eqref{E:main-PE} and
  Proposition~\ref{P:Main2}, we have that
  \begin{align*}
    \int_\Omega |\Delta v|^p
    =   & \int_{\Omega} |\mu v-\nu u^{\gamma}|^p
    \le   2^{p-1}\int_\Omega(|\mu v|^p + |\nu u^\gamma|^p) \\
    \le & 2^{p-1}\int_\Omega \left[\left(\frac{\nu}{\mu}\right)^p \mu^p u^{\gamma p} + \nu^p u^{\gamma p}\right]
    =     (2\nu)^p \int_\Omega u^{\gamma p}.
  \end{align*}
  The inequality~\eqref{E:Main-P0} is then proved by plugging the above
  inequality into the previous one and applying Proposition~\ref{P:Main2} again
  for the last term. This completes the proof of Proposition~\ref{P:Main1}.
\end{proof}

\begin{proposition}\label{P:Main3}
  For given $u_0$ satisfying~\eqref{E:initial-cond-PE}, for any $p >
  \max\{1,\beta\}$ and $\epsilon > 0$, there is $C_\epsilon > 0$ such that for
  all $t \in (0, T_{\max}(u_0))$,
  \begin{align*}
    \int_\Omega \frac{v^{p+1}(t,\cdot;u_0)}{(1+v(t,\cdot;u_0) )^\beta}
    \le  \epsilon \int_\Omega\frac{u^{\gamma(p+1)}(t,\cdot;u_0)}{(1+v(t,\cdot;u_0))^\beta}
       + C_\epsilon \left(\int_\Omega\frac{ v(t,\cdot;u_0)}{(1+v(t,\cdot;u_0))^{\frac{\beta}{p+1}}}\right)^{p+1} .
  \end{align*}
\end{proposition}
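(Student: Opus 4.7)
I plan to prove Proposition~\ref{P:Main3} in three steps: testing the elliptic equation against a weighted power of $v$, applying Young's inequality to handle the resulting cross term, and closing with an interpolation argument that produces the free parameter $\epsilon$.

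First, I would multiply the second equation of~\eqref{E:main-PE}, namely $0=\Delta v-\mu v+\nu u^\gamma$, by $v^p/(1+v)^\beta$ and integrate over $\Omega$. Integration by parts with the Neumann boundary condition yields
\[
\int_\Omega \frac{v^{p-1}\bigl[p+(p-\beta)v\bigr]}{(1+v)^{\beta+1}}\,|\nabla v|^2
+\mu\int_\Omega \frac{v^{p+1}}{(1+v)^\beta}
=\nu\int_\Omega \frac{u^\gamma v^p}{(1+v)^\beta}.
\]
Since $p>\beta$, the gradient integrand is nonnegative and may be discarded, leaving the key reduction $\mu \int v^{p+1}/(1+v)^\beta \le \nu \int u^\gamma v^p/(1+v)^\beta$. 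Introducing $G\coloneqq v/(1+v)^{\beta/(p+1)}$, so that $G^{p+1}=v^{p+1}/(1+v)^\beta$ matches the LHS integrand and $\int G$ matches the $L^1$-factor on the proposition's RHS, the right-hand integrand factors as $[u^\gamma(1+v)^{-\beta/(p+1)}]\cdot G^p$. Applying Young's inequality with conjugate exponents $p+1$ and $(p+1)/p$ and a parameter $\delta$, the first factor to the $(p+1)$-th power produces a constant multiple of $u^{\gamma(p+1)}/(1+v)^\beta$ while the second factor produces a multiple of $G^{p+1}$; after partial absorption into the LHS this delivers a direct bound of the form $\int G^{p+1}\le (\nu/\mu)^{p+1}\int u^{\gamma(p+1)}/(1+v)^\beta$ with a \emph{fixed} constant.

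To promote this fixed-constant bound to the arbitrary-$\epsilon$ form claimed in the proposition, I would decompose $G=(G-\bar G)+\bar G$ with $\bar G\coloneqq |\Omega|^{-1}\int G$ and use convexity to obtain
\[
\int_\Omega G^{p+1} \le 2^p\,\|G-\bar G\|_{p+1}^{p+1} + 2^p|\Omega|^{-p}\Bigl(\int_\Omega G\Bigr)^{p+1},
\]
so that the second term already matches the $(\int G)^{p+1}$-factor on the RHS. The oscillatory first term is then controlled by the Gagliardo--Nirenberg inequality $\|G-\bar G\|_{p+1}^{p+1}\le C\|\nabla G\|_2^{\theta(p+1)}(\int G)^{(1-\theta)(p+1)}$ with $\theta=\theta(N,p)$, after bounding $\|\nabla G\|_2^2$ in terms of $\int u^{\gamma(p+1)}/(1+v)^\beta$ via a secondary testing of the elliptic equation by $\int_0^v(1+s)^{-2\beta/(p+1)}\,ds$ (note $|\nabla G|^2\le |\nabla v|^2/(1+v)^{2\beta/(p+1)}$ since $p+1>\beta$). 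A final Young's inequality with parameter $\epsilon$ applied to the product $\|\nabla G\|_2^{\theta(p+1)}(\int G)^{(1-\theta)(p+1)}$ then converts it into $\epsilon\int u^{\gamma(p+1)}/(1+v)^\beta + C_\epsilon(\int G)^{p+1}$, closing the argument.

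The main obstacle will be the dimensional restriction $\theta(p+1)<2$ required for Young's absorption in the final step; this may fail for large $p$ or high $N$. A robust alternative I would fall back on is a domain-splitting at a level $M>0$: on $\{v\le M\}$ one uses the pointwise bound $G^{p+1}\le M^p G$ and chooses $M=\int G$ so that $M^p\int G=(\int G)^{p+1}$ directly; on $\{v>M\}$ one uses a localized version of the testing argument (with $v^p/(1+v)^\beta$ replaced by a smooth cutoff supported in $\{v>M\}$) to obtain a bound whose coefficient on $\int u^{\gamma(p+1)}/(1+v)^\beta$ tends to $0$ as $M\to\infty$. Balancing $M$ against $\epsilon$ produces the proposition without any dimensional restriction on $p$.
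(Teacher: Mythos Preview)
Your two-stage plan (test the elliptic equation, then interpolate to trade the fixed constant for an arbitrary $\epsilon$) is the right architecture, and your first step is correct and is exactly what the paper does. The gap is in the interpolation: you apply it to $G=v(1+v)^{-\beta/(p+1)}$, and this fails in two places. First, your secondary testing with $\phi(v)=\int_0^v(1+s)^{-2\beta/(p+1)}ds$ gives only $\|\nabla G\|_2^2\le\nu\int\phi(v)\,u^\gamma\le\nu\int v\,u^\gamma$, and $\int v\,u^\gamma$ cannot be dominated by the \emph{weighted} integral $\int u^{\gamma(p+1)}(1+v)^{-\beta}$ that the proposition demands---any Young splitting that places the factor $(1+v)^{-\beta}$ on the $u$-term forces a \emph{positive} power of $(1+v)$ onto the remaining $v$-term, which cannot be absorbed. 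Second, even under $\theta(p+1)<2$, your final Young step produces the exponent $2(1-\theta)(p+1)/(2-\theta(p+1))$ on $\int G$, and a one-line computation shows this equals $p+1$ only when $p=1$; for $p>1$ it is strictly larger, so you do not recover the stated right-hand side. The level-set fallback does not repair this: the claim that a localized testing on $\{v>M\}$ yields a coefficient on $\int u^{\gamma(p+1)}(1+v)^{-\beta}$ tending to $0$ as $M\to\infty$ is unsubstantiated, and the testing identity carries no such smallness.

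The paper's remedy is to interpolate at the level of $w\coloneqq G^{(p+1)/2}=v^{(p+1)/2}(1+v)^{-\beta/2}$ instead of $G$. Then $\int w^2$ is the left-hand side, and Ehrling's inequality $\|w\|_2^2\le\delta\|\nabla w\|_2^2+C_\delta\|w\|_1^2$ holds in every dimension with arbitrary $\delta>0$. The point is that $|\nabla w|^2\lesssim v^{p-1}(1+v)^{-\beta}|\nabla v|^2$, which is precisely the gradient term your \emph{first} testing (multiplier $v^p(1+v)^{-\beta}$) already bounds by $C\int u^{\gamma(p+1)}(1+v)^{-\beta}$ with the correct weight---so no secondary testing is needed. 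The remaining term $\|w\|_1^2=\big(\int G^{(p+1)/2}\big)^2$ is handled by Cauchy--Schwarz on the factorization $G^{(p+1)/2}=G^{p/2}\cdot G^{1/2}$, followed by Young and H\"older, giving $\tfrac12\int G^{p+1}+C(\int G)^{p+1}$ with the exact exponent $p+1$. Passing to the square root $w$ removes both of your obstacles simultaneously.
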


\begin{proof}
  First, by the second equation in~\eqref{E:main-PE}, for any $p > 1$,
  \begin{align}\label{E:new-v-estimate-eq1}
    0 & =\int_\Omega \frac{v^p}{(1+v)^\beta}\Delta v-\mu \int_\Omega\frac{v^{p+1}}{(1+v)^\beta}+\nu\int_\Omega \frac{u^\gamma v^p}{(1+v)^\beta}\nonumber                                                                                 \\
      & =-p\int_\Omega \frac{v^{p-1}}{(1+v)^\beta}|\nabla v|^2+\beta\int_\Omega\frac{v^p}{(1+v)^{\beta+1}}|\nabla v|^2-\mu \int_\Omega\frac{v^{p+1}}{(1+v)^\beta}+\nu\int_\Omega \frac{u^\gamma v^p}{(1+v)^\beta}\nonumber \\
      & \le -(p-\beta)\int_\Omega \frac{v^{p-1}}{(1+v)^\beta}|\nabla v|^2-\mu \int_\Omega\frac{v^{p+1}}{(1+v)^\beta}+\nu\int_\Omega \frac{u^\gamma v^p}{(1+v)^\beta}
  \end{align}
  for all $t \in (0,T_{\max}(u_0))$.  Note that there is $C_1=C_1(\mu,\nu)>0$
  such that
  \begin{align*}
    \nu\int_\Omega \frac{u^\gamma v^p}{(1+v)^\beta}\le \mu \int_\Omega \frac{v^{p+1}}{(1+v)^\beta}+C_1\int_\Omega \frac{u^{\gamma(p+1)}}{(1+v)^\beta}.
  \end{align*}
  This together with~\eqref{E:new-v-estimate-eq1} implies that
  \begin{equation}\label{E:new-v-estimate-eq2}
    \int_\Omega \frac{v^{p-1}}{(1+v)^\beta} |\nabla v|^2
    \le \frac{C_1}{p-\beta} \int_\Omega \frac{u^{\gamma(p+1)}}{(1+v)^\beta}
  \end{equation}
  for all $p > \beta$ and $t \in (0,T_{\max}(u_0))$.

  Next, by Ehrling's lemma (see, e.g., Exercise~6.12, on p.~173, under the name
  ``J.-L. Lion's Lemma'' of~\cite{brezis:11:functional}), for any $\delta > 0$,
  there is $C_{1,\delta} > 0$ such that
  \begin{align}\label{E:new-v-estimate-eq3}
        \int_\Omega \frac{v^{p+1}}{(1+v)^\beta}
    =   \int_\Omega \left(\frac{v^{\frac{p+1}{2}}}{(1+v)^{\frac{\beta}{2}}}\right)^2
    \le \delta \int_\Omega\left|\nabla  \left(\frac{v^{\frac{p+1}{2}}}{(1+v)^{\frac{\beta}{2}}}\right)\right|^2
        + C_{1,\delta} \left(\int_\Omega \frac{v^{\frac{p+1}{2}}}{(1+v)^{\frac{\beta}{2}}}\right)^2.
  \end{align}
  Observe that
  \begin{align}\label{E:new-v-estimate-eq4}
    \int_\Omega\left|\nabla \left(\frac{v^{\frac{p+1}{2}}}{(1+v)^{\frac{\beta}{2}}}\right)\right|^2
    & = \int_\Omega \left| \frac{p+1}{2} \frac{v^{\frac{p-1}{2}}}{(1+v)^{\frac{\beta}{2}}}\nabla v -\frac{\beta}{2}\frac{v^{\frac{p+1}{2}}}{(1+v)^{\frac{\beta}{2}+1}}\nabla v\right|^2\nonumber \\
    & \le \frac{(p+1)^2}{2}\int_\Omega \frac{v^{p-1}}{(1+v)^\beta}|\nabla v|^2 +\frac{\beta^2}{2}\int_\Omega \frac{v^{p+1}}{(1+v)^{\beta+2}}|\nabla v|^2\nonumber                                \\
    & \le \left(\frac{(p+1)^2}{2}+\frac{\beta^2}{2}\right) \int_\Omega \frac{v^{p-1}}{(1+v)^\beta}|\nabla v|^2\nonumber                                                                          \\
    & \le \frac{C_1}{p - \beta} \left(\frac{(p+1)^2}{2}+\frac{\beta^2}{2}\right) \int_\Omega \frac{u^{\gamma(p+1)}}{(1+v)^\beta},
  \end{align}
  where the last inequality is due to~\eqref{E:new-v-estimate-eq2}. By
  H\"older's inequality and Young's inequality, for $\epsilon' = \frac{1}{2
  C_{1, \delta}}$, there is a constant $C_{2, \delta} > 0$ such that
  \begin{align}\label{E:new-v-estimate-eq5}
    \left(\int_\Omega \frac{v^{\frac{p+1}{2}}}{(1+v)^{\frac{\beta}{2}}}\right)^2
     =   & \left(\int_\Omega \frac{v^{\frac{p}{2}}}{(1+v)^{\frac{\beta}{2}\frac{p}{p+1}}} \times \frac {v^{\frac{1}{2}}}{(1+v)^{\frac{\beta}{2}\frac{1}{p+1}}}\right)^2\nonumber                 \\
     \le & \left(\int_\Omega \frac{v^p}{(1+v)^{\frac{\beta p}{p+1}}}\right)\left(\int_\Omega \frac{ v}{(1+v)^{\frac{\beta}{p+1}}}\right)\nonumber                                                \\
     \le & \frac{1}{2 C_{1, \delta}} \left(\int_\Omega \frac{v^p}{(1+v)^{\frac{\beta p}{p+1}}}\right)^{\frac{p+1}{p}} + C_{2, \delta}\left(\int_\Omega \frac{v}{(1+v)^{\frac{\beta}{p+1}}}\right)^{p+1}\nonumber \\
     \le & \frac{1}{2 C_{1, \delta}} \int_\Omega \frac{v^{p+1}}{(1+v)^\beta} + C_{2, \delta}\left(\int_\Omega \frac{v}{(1+v)^{\frac{\beta}{p+1}}}\right)^{p+1}.
  \end{align}
  Combining~\eqref{E:new-v-estimate-eq3}, \eqref{E:new-v-estimate-eq4},
  and~\eqref{E:new-v-estimate-eq5}, we complete the proof of
  Proposition~\ref{P:Main3} with
  \begin{align*}
    \epsilon   = 2 \delta \frac{C_1}{p - \beta} \left(\frac{(p+1)^2}{2}+\frac{\beta^2}{2}\right) \quad \text{and} \quad
    C_\epsilon = 2 C_{1, \delta} C_{2, \delta}, \quad \text{for all $\delta > 0$.}
  \end{align*}
  This completes the proof of Proposition~\ref{P:Main3}.
\end{proof}

\subsection{Fundamental estimates for $u$} \label{SS:Fundamental-u}

In this subsection, we present several fundamental estimates for $u$ to be used
in the proofs of the main theorems. Throughout this subsection, we also assume
that $u_0$ satisfies~\eqref{E:initial-cond-PE}, and $(u(t,x;u_0), v(t,x;u_0))$
is the unique classical solution of~\eqref{E:main-PE} on the maximal interval
$(0, T_{\max}(u_0))$ with initial condition $u(0,x;u_0) = u_0(x)$. Again, if no
confusion occurs, we may drop $u_0$ in $(u(t,x;u_0), v(t,x;u_0))$ and
$T_{\max}(u_0)$. The next proposition is a direct consequence of the comparison
principle, the proof of which is left for the interested reader.

\begin{proposition}\label{P:Main-0}
  For any $u_0$ that satisfies~\eqref{E:initial-cond-PE}, the following hold:
  \begin{itemize}
    \item[(1)] If $a=b=0$, then
      \begin{equation}\label{E:constant-mass}
        \int_{\Omega} u(t,x;u_0)dx =\int_{\Omega} u_0(x)dx  \quad \text{\rm for all}\,\,  t\in \left(0,T_{\max}(u_0)\right) .
      \end{equation}

  \item[(2)] If $a,b>0$, then
    \begin{equation*}
      \int_{\Omega} u(t,x;u_0)dx \le m^* \coloneqq {\max}\left\{\int_{\Omega} u_0(x)dx,\:  \left(\frac{a}{b}\right)^{1/\alpha}|\Omega| \right\} \quad \text{\rm for all}\,\,  t\in \left(0,T_{\max}(u_0)\right) ,
    \end{equation*}
    where $|\Omega|$ refers to the Lebesgue measure of $\Omega$.
  \end{itemize}
\end{proposition}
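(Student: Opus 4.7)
The plan is to integrate the first equation of~\eqref{E:main-PE} over $\Omega$, exploit the Neumann boundary conditions to eliminate the diffusion and chemotaxis terms, and then analyze the resulting ODE (inequality) for the total mass $M(t) \coloneqq \int_\Omega u(t,x;u_0)\,dx$. Indeed, by the divergence theorem together with $\partial_n u = \partial_n v = 0$ on $\partial\Omega$, we have both $\int_\Omega \Delta u\,dx = 0$ and $\int_\Omega \nabla\cdot\!\bigl(\tfrac{u^m}{(1+v)^\beta}\nabla v\bigr)\,dx = 0$, so that for every $t \in (0,T_{\max}(u_0))$,
\[
  M'(t) = a\,M(t) - b\int_\Omega u^{1+\alpha}(t,x;u_0)\,dx.
\]

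For part~(1), the assumption $a = b = 0$ collapses the identity to $M'(t) \equiv 0$, which yields the conservation of mass~\eqref{E:constant-mass} by a single integration in $t$. For part~(2), with $a,b > 0$, the next step is to bound the dissipation term from below by $M$ itself. Since $s \mapsto s^{1+\alpha}$ is convex and positive solutions ensure $u > 0$ on $[0,T_{\max}(u_0)) \times \overline{\Omega}$ by Proposition~\ref{P:local-existence}, Jensen's inequality (equivalently, H\"older's inequality with exponents $1+\alpha$ and $\frac{1+\alpha}{\alpha}$) gives
\[
  \int_\Omega u^{1+\alpha}\,dx \ge |\Omega|^{-\alpha}\left(\int_\Omega u\,dx\right)^{1+\alpha} = |\Omega|^{-\alpha}\,M(t)^{1+\alpha}.
\]
Substituting this lower bound produces the logistic-type differential inequality
\[
  M'(t) \le a\,M(t) - b\,|\Omega|^{-\alpha}\,M(t)^{1+\alpha}, \qquad t \in (0,T_{\max}(u_0)).
\]

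From here the comparison principle for scalar ODEs finishes the argument. The associated scalar equation $y' = a\,y - b\,|\Omega|^{-\alpha}\,y^{1+\alpha}$ has the unique positive equilibrium $y^* = (a/b)^{1/\alpha}|\Omega|$, and its right-hand side is nonpositive exactly when $y \ge y^*$. Consequently, if $M(0) \le y^*$ then $M(t) \le y^*$ for all $t$, while if $M(0) > y^*$ then $M'(t) \le 0$ as long as $M(t) > y^*$, so $M$ is nonincreasing until it possibly reaches $y^*$; either way, $M(t) \le \max\{M(0), y^*\} = m^*$, which is the claimed bound. I do not anticipate any serious obstacle: the only subtlety is the brief case split in the comparison step, and the positivity of $u$ needed to legitimize Jensen's inequality is guaranteed by~\eqref{E:local-PE-eq0}.
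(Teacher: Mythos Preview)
Your proof is correct and follows exactly the standard approach the paper has in mind: the authors omit the proof, stating only that it is ``a direct consequence of the comparison principle,'' which is precisely the ODE comparison argument you carry out after integrating the equation and applying Jensen's inequality.
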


Before we proceed, we first note that, for all $t\in [0,T_{\max}(u_0))$, the
resolvent $(\mu I-\Delta)^{-1}$ can be written as in \eqref{E:Resolvent}, and
the mild form of $u$ reads
\begin{align}\label{E:u-mild}
    u(t,\cdot)
    & = e^{-At}u_0 - \chi_0 \int_0^t e^{-A(t-s)} \nabla\cdot\!\left(\frac{u^m(s,\cdot)}{(1+v(s,\cdot))^\beta}\nabla v(s,\cdot)\right)ds \nonumber\\
    & \quad + \int_0^t e^{-A(t-s)} u(s,\cdot)\big(\mu+a - b u^\alpha(s,\cdot)\big)\,ds.
  \end{align}

\begin{proposition}\label{P:Lp->GlobalExist}
  Let $u_0$ satisfy~\eqref{E:initial-cond-PE}, and let
  $T_{\max}(u_0)\in(0,\infty]$ be the maximal existence time given by
  Proposition~\ref{P:local-existence}. Suppose that for some {$p >
  \max\{N,mN,\gamma N\}$,}
  \begin{equation}\label{E:Lp->GlobalExist}
    \limsup_{t\to T_{\max}(u_0)-}\int_\Omega u^{p}(t,x;u_0)\,dx < \infty
    \quad \text{or equivalently} \quad
    \sup_{t\in [0,T_{\max}(u_0))} \Norm{u(t,\cdot)}_{p} < \infty.
  \end{equation}
  Then we have
  \begin{enumerate}

    \item there is some constant $M > 0$ depending on $p$, $\mu, \nu, \gamma$,
      and $\sup_{t \in [0, T_{\max}(u_0))} \Norm{u(t,\cdot)}_p$ such that
      \begin{equation}\label{E:Lp->Global-v}
        \sup_{t\in[0,T_{\max}(u_0))} \Norm{v(t,\cdot)}_\infty + \sup_{t\in[0,T_{\max}(u_0))} \Norm{\nabla v(t,\cdot)}_\infty \le M.
      \end{equation}

    \item $u$ is uniformly bounded in $L^\infty(\Omega)$ on
      $[0,T_{\max}(u_0))$, i.e.
      \begin{equation}\label{E:Lp->Global-u}
        \limsup_{t\to T_{\max}(u_0)-} \Norm{u(t,\cdot;u_0)}_\infty<\infty
        \quad \text{or equivalently} \quad
        \sup_{t\in[0,T_{\max}(u_0))} \Norm{u(t,\cdot)}_\infty < \infty.
      \end{equation}

  \end{enumerate}
\end{proposition}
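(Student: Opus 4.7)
My plan proceeds in two stages. First, I combine the $L^p$ bound on $u$ with Proposition~\ref{P:Main2} and elliptic regularity to upgrade to $L^\infty$ bounds on $v$ and $\nabla v$. Second, with $\nabla v$ bounded, I bootstrap the $L^p$ bound on $u$ to every $L^q$ via Corollary~\ref{C:bootstrap}, and then pass from $L^q$ to $L^\infty$ using the mild form~\eqref{E:u-mild} together with a fractional-power embedding.

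For Part~(1), since $p > \gamma N$, Proposition~\ref{P:Main2} applied with exponent $p/\gamma$ yields $\|v(t,\cdot)\|_{p/\gamma} \le (\nu/\mu)\|u(t,\cdot)\|_p^\gamma$, which is uniformly bounded on $[0, T_{\max}(u_0))$. Writing $v = \nu A^{-1} u^\gamma$, I have $\|A^\sigma v\|_{p/\gamma} = \nu \|A^{\sigma-1} u^\gamma\|_{p/\gamma} \le C\|u\|_p^\gamma$ for every $\sigma \in [0,1)$. Since $p/\gamma > N$, I may pick $\sigma < 1$ close enough to $1$ that $2\sigma - N\gamma/p > 1$; the embedding~\eqref{E:Embedding-2} then gives $X_{p/\gamma}^\sigma \hookrightarrow C^1(\overline{\Omega})$, producing~\eqref{E:Lp->Global-v} with $M$ of the advertised dependence. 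An equivalent route is to combine the $L^{p/\gamma}$ elliptic regularity~\eqref{E:main-inequality-eq-4-0} with the Sobolev embedding $W^{2, p/\gamma}(\Omega) \hookrightarrow C^1(\overline{\Omega})$.

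For Part~(2), let $M$ denote the bound on $\|v\|_\infty + \|\nabla v\|_\infty$ from Part~(1). Since $(1+v)^\beta \ge 1$, Cauchy--Schwarz and Young's inequality give, for every $\epsilon > 0$ and $r > 1$,
\[
  \chi_0 \int_\Omega u^{r+m-2}\frac{\nabla u \cdot \nabla v}{(1+v)^\beta}
  \le |\chi_0|\, M \int_\Omega u^{r+m-2}|\nabla u|
  \le \epsilon \int_\Omega u^{r-2}|\nabla u|^2 + C_{\epsilon,r} \int_\Omega u^{r+\rho},
\]
where I take $\rho = 2(m-1) > 0$ when $m > 1$, and any fixed $\rho > 0$ when $m \le 1$ (absorbing $u^{r+2(m-1)}$ into $u^{r+\rho}$ plus a constant by Young's inequality). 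The hypothesis $p > \max\{N, mN\}$ guarantees $p > \rho N/2$, so Corollary~\ref{C:bootstrap} applied with $T = T_{\max}(u_0)$ and $p_0 = p$ yields $\sup_{t \in [0, T_{\max}(u_0))} \int_\Omega u^q(t,\cdot)\, dx < \infty$ for every $q > 1$.

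To upgrade to $L^\infty$, fix any $q > N$ and $\sigma \in (N/(2q), 1/2)$ so that $X_q^\sigma \hookrightarrow L^\infty(\Omega)$ by Lemma~\ref{L:Embedding}, and set $F \coloneqq u^m \nabla v/(1+v)^\beta$ and $g \coloneqq u(\mu + a - bu^\alpha)$. Applying $A^\sigma$ to~\eqref{E:u-mild} and invoking Lemmas~\ref{L:FracSemigroup} and~\ref{L:new-lm} with some $\delta > 0$ gives
\begin{align*}
  \|u(t, \cdot)\|_\infty
  &\le \|u_0\|_\infty + C|\chi_0|\int_0^t (t-s)^{-\sigma}\bigl(1 + (t-s)^{-1/2}\bigr) e^{-\delta(t-s)} \|F(s,\cdot)\|_q\, ds \\
  &\quad + C\int_0^t (t-s)^{-\sigma} e^{-\delta(t-s)} \|g(s,\cdot)\|_q\, ds.
\end{align*}
The integrands are bounded, respectively, by $M\|u(s,\cdot)\|_{mq}^m$ and by $(\mu + a)\|u(s,\cdot)\|_q + b\|u(s,\cdot)\|_{(1+\alpha)q}^{1+\alpha}$, each uniformly finite by the preceding bootstrap. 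Since $\sigma < 1/2$, both singularities $(t-s)^{-\sigma}$ and $(t-s)^{-\sigma - 1/2}$ are integrable near $s = t$, and the exponential factors supply uniformity in $t$, which proves~\eqref{E:Lp->Global-u}. The main obstacle is the bootstrap step: verifying the abstract hypothesis~\eqref{E:C-bootstrap} of Corollary~\ref{C:bootstrap} with admissible $\rho > 0$ uniformly across the full range $m > 0$, which is handled by the Young-type absorption above combined with the $L^\infty$-bound on $\nabla v$ from Part~(1).
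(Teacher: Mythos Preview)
Your Part~(1) is correct and matches the paper's route. For Part~(2) your approach is valid but more circuitous: you first bootstrap $u$ to every $L^q$ via Corollary~\ref{C:bootstrap}, then feed these bounds into the mild form~\eqref{E:u-mild}. The paper skips the bootstrap entirely. Since $p>mN$, one can choose $\tilde p\in(N,p/m)$ so that $\|u^m(s,\cdot)\|_{\tilde p}=\|u(s,\cdot)\|_{m\tilde p}^{m}$ is bounded \emph{directly} from the $L^p$ hypothesis; and the reaction term is handled by the pointwise concavity bound $w(\mu+a-bw^\alpha)\le\tilde M$ when $b>0$, which reduces the last integral in~\eqref{E:u-mild} to a constant. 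Your bootstrap is thus invoked only to control $\|u\|_{(1+\alpha)q}$ in the $g$-term --- exactly the piece the concavity trick removes. What your route buys is uniformity in $b\ge 0$ without case-splitting; what the paper's buys is brevity and avoidance of the bootstrap machinery.

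Two caveats on your bootstrap step. First, Corollary~\ref{C:bootstrap} as stated does not permit the additive constant you introduce on the right of~\eqref{E:C-bootstrap} when $m\le 1$; this is harmless (it merely enlarges $L$ in Lemma~\ref{L:bootstrap}) but should be said. Second, your absorption ``$u^{r+2(m-1)}\le 1+u^{r+\rho}$'' fails whenever $r+2(m-1)<0$, which can occur for small $m$ with $N=1$ and $r$ near $1$; since Lemma~\ref{L:bootstrap} only iterates over $r\ge p_0$ this is patchable when $p_0>2(1-m)$, but in the residual corner case the cleanest fix is the paper's direct argument, which sidesteps the issue altogether.
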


\begin{proof}
  We first note that since $[0,T_{\max}(u_0)) \ni t \mapsto u(t,\cdot;u_0) \in
  C(\overline{\Omega})$ is continuous, the conditions
  in~\eqref{E:Lp->GlobalExist} are equivalent. The same is true for the
  equivalence of the two conditions in~\eqref{E:Lp->Global-u}. \medskip

  \noindent(1) Choose $\hat{p}>N$ such that $\gamma \hat{p} < p$. Then,
  by~\eqref{E:Lp->GlobalExist},
  \begin{align}\label{E_:ugammap}
    \sup_{t\in [0, T_{\max}(u_0))}\int_\Omega u^{\gamma \hat  p}(t,\cdot)<\infty.
  \end{align}
  By choosing $1/2 < \sigma < 1$ such that $2\sigma-N/\hat{p} > 1$, and by the
  arguments of~\eqref{E:bound-v+grad-v}, there is $M > 0$ such that
  \begin{equation*}
    \Norm{v(t,\cdot)}_\infty\le M,\quad \text{and} \quad
    \Norm{\nabla v(t,\cdot)}_\infty\le M, \qquad \forall\, t \in [0, T_{\max}(u_0)).
  \end{equation*}
  This proves~\eqref{E:Lp->Global-v}. \medskip

  \noindent(2) Fix an arbitrary $t\in [0,T_{\max})$. Recall the mild form of $u$
  in~\eqref{E:u-mild}. By the concavity of the function $f: w \mapsto w(\mu + a
  - b w^\alpha)$ for $w \ge 0$, the maximum of $f$ is attained by solving $f'(w)
  = 0$. The maximizer is given by $w_* = \left( \frac{\mu + a}{b(1 + \alpha)}
  \right)^{1/\alpha}$ with the corresponding maximum value
  \begin{align*}
    f(w_*) = \left( \frac{\mu + a}{b(1 + \alpha)} \right)^{1/\alpha} \times \frac{(\mu + a)\alpha}{1 + \alpha} \eqqcolon \tilde{M}.
  \end{align*}
  Hence, from~\eqref{E:u-mild} and the above inequality, we have
  \begin{align*}
    0 \le u(t,\cdot)\le {e^{-A t}u_0}-{\chi_0\int_0 ^t e^{-A(t-s)}\nabla \cdot\left(\frac{u^m(s,\cdot)}{(1+v(s,\cdot))^\beta}\nabla v(s,\cdot)\right)ds}+\int_0 ^t e^{-A(t-s)}\tilde{M} ds.
  \end{align*}
  It is not difficult to show that
  \begin{equation*}
    \Norm{e^{-At}u_0}_\infty \le \Norm{u_0}_\infty \quad \text{and} \quad
    \Norm{\int_0^t e^{-A (t-s)} \tilde{M} ds}_\infty \le \frac{\tilde{M}}{\mu} \qquad \text{for all $t\ge 0$.}
  \end{equation*}
  Choose $0 < \tilde{\sigma} < 1/2$ and $\tilde{p} > N$ such that $\tilde{p} >
  N/(2\tilde{\sigma})$ and $m \tilde{p} < p$. By~\eqref{E:Embedding-2},
  \eqref{E:G-1-eq1}, and~\eqref{E:Lp->Global-v}, we have
  \begin{align*}
     \MoveEqLeft \Norm{\int_0^t e^{-A(t-s)}\nabla \cdot\left(\frac{u^m(s,\cdot)}{(1+v(s,\cdot))^\beta}\nabla v(s,\cdot)\right) ds }_\infty                                    \\
     & \le C \int_0^t \left((t-s)^{-\tilde{\sigma}}+(t-s)^{-\frac{1}{2}-\tilde{\sigma}}\right) e^{-\delta(t-s)} \Norm{u^m(s,\cdot)}_{\tilde{p}}ds \nonumber \\
     & \le C \sup_{s\in [0, T_{\max}(u_0))} \Norm{u^m(s, \cdot)}_{\tilde{p}} < \infty,
  \end{align*}
  where the last inequality follows from~\eqref{E:Lp->GlobalExist}. This
  completes the proof of Proposition~\ref{P:Lp->GlobalExist}.
\end{proof}

We conclude this subsection with the following lemma about $\int_\Omega u^p$,
which will be used in the proofs of
Theorems~\ref{T:negative-sensitivity}--\ref{T:weak-cross-diffusion}. The proof
is standard and we omit it here.

\begin{lemma}\label{L:global-existence}
  Fix $p > 1$. Suppose there exist constants $C_1, C_2, C_3 \ge 0$, $C_4 > 0$,
  and $0 < \epsilon \le \alpha$ such that the following inequality holds:
  \begin{align*}
    \frac{d}{dt}\int_\Omega u^p\le C_1+ C_2\int_\Omega u^p+C_3 \int_\Omega u^{p+\alpha-\epsilon}-C_4\int_\Omega u^{p+\alpha}, \qquad \text{for all $t\in (0,T_{\max}(u_0))$.}
  \end{align*}
  Then $ \displaystyle \sup_{t\in [0,T_{\max}(u_0))} \int_\Omega u^p < \infty$.
\end{lemma}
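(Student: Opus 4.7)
\textbf{Proof proposal for Lemma~\ref{L:global-existence}.}
The plan is to absorb both non-dissipative positive terms on the right-hand side of the hypothesis into the strongly dissipative term $-C_4\int_\Omega u^{p+\alpha}$, thereby reducing the estimate to an autonomous scalar ODI of the form $y'(t)\le A-B\,y(t)^{1+\alpha/p}$ with constants $A,B>0$, where $y(t)\coloneqq \int_\Omega u^{p}(t,\cdot;u_0)\,dx$. Since the exponent $1+\alpha/p$ exceeds $1$, the right-hand side is coercive in $y$, and boundedness of $y$ follows immediately from ODE comparison.

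First, since the hypothesis guarantees $0<\epsilon\le \alpha$, the exponent $p+\alpha-\epsilon$ is strictly smaller than $p+\alpha$, and a pointwise Young's inequality with the conjugate pair $r=(p+\alpha)/(p+\alpha-\epsilon)$ and $r'=(p+\alpha)/\epsilon$ produces a constant $\tilde C_3=\tilde C_3(p,\alpha,\epsilon,C_3,C_4)$ with
\[
  C_3\,u^{p+\alpha-\epsilon}\le \tfrac{C_4}{4}\,u^{p+\alpha}+\tilde C_3, \qquad \forall u\ge 0.
\]
In the same spirit, using $\alpha>0$, Young's inequality with exponents $(p+\alpha)/p$ and $(p+\alpha)/\alpha$ yields a constant $\tilde C_2$ such that $C_2\,u^{p}\le \tfrac{C_4}{4}\,u^{p+\alpha}+\tilde C_2$. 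Integrating the two pointwise bounds over $\Omega$ and inserting them into the hypothesis gives, after consolidating constants,
\[
  y'(t)\le A_0-\tfrac{C_4}{2}\int_\Omega u^{p+\alpha}(t,\cdot)\,dx, \qquad A_0\coloneqq C_1+(\tilde C_2+\tilde C_3)|\Omega|.
\]

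Next, I would invoke H\"older's inequality with exponent $(p+\alpha)/p$ to compare $y(t)$ with the dissipative integral:
\[
  y(t)=\int_\Omega u^{p}\le |\Omega|^{\alpha/(p+\alpha)}\left(\int_\Omega u^{p+\alpha}\right)^{p/(p+\alpha)},
\]
equivalently $\int_\Omega u^{p+\alpha}\ge |\Omega|^{-\alpha/p}\,y(t)^{(p+\alpha)/p}$. Plugging this into the previous differential inequality produces
\[
  y'(t)\le A_0-B\,y(t)^{1+\alpha/p}, \qquad B\coloneqq \tfrac{C_4}{2}|\Omega|^{-\alpha/p}>0.
\]
Since the initial datum $u_0\in C(\overline\Omega)$ with $\inf u_0>0$ gives $y(0_+)=\int_\Omega u_0^{p}<\infty$, a standard comparison argument for scalar ODEs now yields $y(t)\le \max\{y(0_+),\,y_*\}$ for all $t\in(0,T_{\max}(u_0))$, where $y_*\coloneqq (A_0/B)^{p/(p+\alpha)}$ is the unique positive root of $A_0-B\,y^{1+\alpha/p}=0$. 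This proves the claim.

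No step above is technically deep; the only thing to verify with care is that the two Young absorptions can be performed simultaneously against the single coefficient $C_4$ (by splitting it as $C_4/4+C_4/4+C_4/2$), which is legitimate precisely because the strict inequalities $\epsilon>0$ and $\alpha>0$ are guaranteed by the hypotheses. If either of these strict inequalities were weakened to an equality, the absorption scheme would fail and the resulting ODI would lose its coercivity, so this is where the role of the hypothesis $0<\epsilon\le \alpha$ (with $\alpha>0$ built into the system) is indispensable.
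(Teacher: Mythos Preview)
Your proof is correct and is precisely the standard argument the paper has in mind; indeed, the paper omits the proof entirely with the remark ``The proof is standard and we omit it here.'' Your two Young absorptions followed by H\"older's inequality and an ODE comparison for $y'\le A_0 - B\,y^{1+\alpha/p}$ is exactly the routine computation being alluded to.
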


\section{Boundedness and global existence with negative sensitivity}\label{S:negative-sensitivity}

In this section, we study the boundedness and global existence of classical
solutions of~\eqref{E:main-PE} with $\chi_0\le 0$ and prove
Theorem~\ref{T:negative-sensitivity}. Throughout this section, we assume that
$u_0$ satisfies~\eqref{E:initial-cond-PE}, and $(u, v)$ is the classical
solution of~\eqref{E:main-PE} with initial condition $u(0,x;u_0) = u_0(x)$.

We first introduce some notation. Let
\begin{equation}\label{E:super-u-and-v}
  \bar{u}(t) \coloneqq \max_{x\in\overline \Omega} u(t,x) \quad {\rm and}\quad
  \bar{v}(t) \coloneqq \max_{x\in\overline{\Omega}}v(t,x), \quad \text{for $t \in [0,T_{\max})$.}
\end{equation}
We will need the following fact
\begin{align}\label{E:MaxIneq}
  \mu \bar{v}(t) - \nu \bar{u}^\gamma(t) \leq 0,
  \quad \text{for all $t \in \left(0, T_{\max}\right)$.}
\end{align}
This follows from the observation that
\[
  v(t,\cdot) = (\mu I-\Delta)^{-1} \left(\nu u^\gamma(t,\cdot)\right)
  \le (\mu I-\Delta)^{-1} \left(\nu \bar{u}^\gamma(t)\right)
  = \frac{\nu}{\mu}\bar{u}^\gamma(t),
\]
where the inequality holds since $u^\gamma(t,\cdot) \leq \bar{u}^\gamma(t)$
pointwise. Therefore, we have $\mu v(t,x) - \nu \bar{u}^\gamma(t) \leq 0$ for
all $x \in \overline{\Omega}$. Taking the maximum with respect to $x$ on the
left-hand side yields~\eqref{E:MaxIneq}. \medskip

\subsection{A technical lemma}

In this subsection, we present a lemma to be used in the proof of
Theorem~\ref{T:negative-sensitivity}.

\begin{lemma}\label{L:nonincreasing}
  Assume that $\chi_0 \le 0$.
  \begin{itemize}
    \item[(1)] Assume that $a, b > 0$. If $\bar{u}(t_0) > u^* =
      \left(a/b\right)^{1/\alpha}$ for some $t_0 \in (0, T_{\max})$, then
      $\bar{u}(t)$ is nonincreasing on $(0,t_0]$.

    \item[(2)] Assume that $a = b = 0$. Then $\bar{u}(t)$ is non-increasing on
      $(0,T_{\max})$.

  \end{itemize}
\end{lemma}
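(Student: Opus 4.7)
My plan is to obtain a pointwise differential inequality for the maximum function $\bar u(t)$ and then run a simple continuation/continuity argument to get monotonicity. First, I would use the expanded nondivergence form of the $u$-equation already derived in equation~(2.36) of the proof of Proposition~\ref{P:local-existence}, namely
\[
  u_t=\Delta u-m\chi_0\,\frac{u^{m-1}}{(1+v)^\beta}\nabla u\cdot\nabla v
  +\chi_0\beta\,\frac{u^m}{(1+v)^{\beta+1}}|\nabla v|^2
  -\chi_0\,\frac{u^m}{(1+v)^\beta}\bigl(\mu v-\nu u^\gamma\bigr)
  +u(a-bu^\alpha).
\]
Pick $x_0(t)\in\overline\Omega$ where $u(t,\cdot)$ attains $\bar u(t)$. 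At $x_0(t)$ the Neumann condition together with the interior/boundary maximum principle yields $\nabla u(t,x_0(t))=0$ and $\Delta u(t,x_0(t))\le 0$, so the first two terms on the right-hand side are $\le 0$. Since $\chi_0\le 0$ and $\beta\ge 0$, the term involving $|\nabla v|^2$ is also $\le 0$. For the remaining cross term I would invoke Proposition~\ref{P:Main2} (or equivalently the pointwise comparison $v(t,\cdot)\le \frac{\nu}{\mu}\bar u^\gamma(t)$ used to obtain~\eqref{E:MaxIneq}) to conclude $\mu v(t,x_0(t))-\nu\bar u^\gamma(t)\le 0$, so that $-\chi_0\,(u^m/(1+v)^\beta)(\mu v-\nu u^\gamma)\le 0$ at $x_0(t)$. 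Putting these observations together gives the pointwise bound $u_t(t,x_0(t))\le \bar u(t)\bigl(a-b\,\bar u(t)^\alpha\bigr)$.

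Next I would upgrade this pointwise bound to a statement about $\bar u$ itself. The cleanest way is to show that $\bar u$ is locally Lipschitz and that its upper Dini derivative satisfies
\[
  D^+\bar u(t)\le \bar u(t)\bigl(a-b\,\bar u(t)^\alpha\bigr)
  \quad\text{for every } t\in(0,T_{\max}).
\]
This follows from the standard envelope argument: for any sequence $t_n\downarrow t$, pick a maximizer $x_n$ of $u(t_n,\cdot)$ and a convergent subsequence $x_n\to\hat x$, which is necessarily a maximizer of $u(t,\cdot)$; then $\bar u(t_n)-\bar u(t)\le u(t_n,x_n)-u(t,x_n)$, and dividing by $t_n-t$ and passing to the limit yields $D^+\bar u(t)\le u_t(t,\hat x)$, which is in turn bounded above by the right-hand side above by the previous paragraph.

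Part~(2) is then immediate: with $a=b=0$, $D^+\bar u\le 0$, and a continuous function with nonpositive upper Dini derivative is nonincreasing. For part~(1), set $u^*=(a/b)^{1/\alpha}$ and let $I\subset(0,t_0]$ be the connected component of $\{t\in(0,t_0]:\bar u(t)>u^*\}$ containing $t_0$; write $I=(\tau,t_0]$ with $\tau\ge 0$. On $I$, the right-hand side $\bar u(a-b\bar u^\alpha)$ is strictly negative, so $D^+\bar u<0$ on $I$, and hence $\bar u$ is nonincreasing on $I$. I would then argue $\tau=0$: if $\tau>0$, continuity forces $\bar u(\tau)=u^*$, but monotonicity on $I$ gives $\lim_{t\to\tau^+}\bar u(t)\ge \bar u(t_0)>u^*$, contradicting continuity. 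Therefore $I=(0,t_0]$ and $\bar u$ is nonincreasing on $(0,t_0]$ as claimed.

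The main technical obstacle is justifying the differentiation of $\bar u$ when the maximizer $x_0(t)$ is non-unique or boundary-valued; the envelope/Danskin-type argument above handles both cases, but one must be mildly careful with boundary maxima by invoking the Neumann condition $\partial u/\partial n=0$ to ensure $\nabla u=0$ and a Hopf-type inequality for $\Delta u\le 0$ there. Everything else reduces to inspection of signs using $\chi_0\le 0$, $\beta\ge 0$, and the $L^\infty$-version of Proposition~\ref{P:Main2}.
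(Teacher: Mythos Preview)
Your argument is correct and takes a genuinely different route from the paper. The paper works locally in space--time: at a time $t^*$ with $\bar u(t^*)\ge\bar u(t_0)$, it shows that on a small parabolic cylinder around a maximizer one has the strict inequality $u_t<\Delta u - m\chi_0\,u^{m-1}(1+v)^{-\beta}\nabla u\cdot\nabla v$, and then invokes the strong parabolic maximum principle (interior and boundary versions) to push the maximum backward in time. In part~(2) this forces a case split according to whether $\mu\bar v(t_0)=\nu\bar u(t_0)^\gamma$ or strict inequality holds, since strictness is needed for the comparison. Your approach instead derives the global differential inequality $D^+\bar u(t)\le \bar u(t)\bigl(a-b\bar u(t)^\alpha\bigr)$ via the envelope/Danskin argument and then reads off monotonicity directly; this avoids the case split entirely and makes part~(2) a one-liner. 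The price you pay is the need to justify $\nabla u=0$ and $\Delta u\le 0$ at a boundary maximizer under Neumann conditions, which you flag but describe somewhat loosely (``Hopf-type''). The clean justification is that $\nabla u(x_0)=0$ follows because the tangential gradient vanishes at a boundary maximum and the normal derivative vanishes by the Neumann condition; then, using $\nabla u(x_0)=0$ and $u\in C^2(\overline\Omega)$, second-order Taylor expansion along tangential boundary curves and along the inward normal gives $\partial_{e_i}^2 u(x_0)\le 0$ for an orthonormal frame adapted to $\partial\Omega$, hence $\Delta u(x_0)\le 0$. With that detail filled in, your proof is both valid and more streamlined than the paper's.
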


\begin{proof}
  (1) Assume that $a, b>0$. It suffices to prove that for any given $\delta_0
  \in (0, t_0)$, $\bar{u}(t)$ is nonincreasing on $[\delta_0, t_0]$. Let us fix
  an arbitrary $\delta_0 \in (0, t_0)$. The proof proceeds in three steps.
  \medskip

  \noindent\textbf{Step 1:~} In this step, we first demonstrate that if there
  exists $t^* \in [\delta_0, t_0]$ such that $\bar{u}(t^*) \geq \bar{u}(t_0)$,
  then one can find $t < t^*$ with $t^* - t \ll 1$ such that $\bar{u}(t) \geq
  \bar{u}(t^*)$.

  Suppose that $t^* \in [\delta_0,t_0]$ and $\bar{u}(t^*) \ge \bar{u}(t_0)$. Let
  $x(t^*) \in \overline{\Omega}$ be such that $\bar{u}(t^*) = u(t^*,x(t^*))$.
  Notice that the function $f(s) \coloneqq s \left(a - b s^\alpha \right)$ is
  decreasing once $s \ge u^* = (a/b)^{1/\alpha}$ because $f'(s) = a -
  b(\alpha+1) s^\alpha \le -a \alpha < 0$ for $s \ge u^*$. Hence,
  \begin{equation}\label{E:Noninc-1}
    u(t^*,x(t^*))\big(a-b u^\alpha (t^*,x(t^*))\big)
    \le \bar{u}(t_0)\big(a-b \bar{u}^\alpha(t_0))
    < 0 .
  \end{equation}
  Notice that $v(t^*, x(t^*)) \le \bar{v}(t^*)$, $\bar{u}(t^*) = u(t^*,x(t^*))$,
  and $\chi_0 \le 0$, and hence,
  \begin{align}\label{E:Noninc-2}
    -\chi_0\frac{u^m(t^*,x(t^*))}{(1+v((t^*,x(t^*)))^\beta}\left(\mu v(t^*,x(t^*))-\nu u^\gamma (t^*,x(t^*))\right)   & \notag\\
    \le -\chi_0\frac{u^m(t^*,x(t^*))}{(1+v((t^*,x(t^*)))^\beta} \left(\mu \bar{v}(t^*)-\nu\bar{u}^\gamma (t^*)\right) & \le 0,
  \end{align}
  where the last inequality is due to~\eqref{E:MaxIneq}.
  Combining~\eqref{E:Noninc-1} and~\eqref{E:Noninc-2}, we have
  \begin{align*}
    -\chi_0 \frac{u^m(t^*,x(t^*))}{(1+v((t^*,x(t^*)))^\beta} \bigg[ \mu v\left(t^*,x(t^*)\right)-\nu u^\gamma \left(t^*,x(t^*)\right)\bigg] &\\
    +  u(t^*,x(t^*))\bigg[ a-b u^\alpha (t^*,x(t^*))\bigg] & \le \bar{u}(t_0)\left[ a-b \bar{u}^\alpha(t_0)\right ] < 0.
  \end{align*}
  Denote $B(y, \epsilon) \coloneqq \left\{ x \in \Omega \,:\, |x-y| < \epsilon
  \right\}$. Then, by the continuity of $\bar{u}(t)$ in $t$ on the closed
  interval $\left[\frac{\delta_0}{2}, t_0\right]$, for any given $0 < \epsilon
  \ll 1$, for all $t \in [t^*-\epsilon, t^*+\epsilon]$ and $x \in
  B(x(t^*),\epsilon)$, it holds that
  \begin{align}\label{E_:Step-1}
    -\chi_0\frac{u^m(t,x)}{(1+v(t,x))^\beta}\left(\mu v(t,x)-\nu u^\gamma (t,x)\right) +  u(t,x)\big(a-b u^\alpha (t,x)\big)
    \le  \frac{1}{2}\bar{u}(t_0)\big(a-b \bar{u}^\alpha(t_0))
    < 0.
  \end{align}
  Since $\chi_0 \le 0$, from the first equation in~\eqref{E:local-pf-eq7}, we
  have
  \begin{align*}
    \p_t u \le \Delta u-\chi_0 m\frac{u^{m-1}}{(1+v)^\beta}\nabla u\cdot\nabla v -\chi_0 \frac{u^m}{(1+v)^\beta} (\mu v-\nu u^\gamma)+u(a-bu^\alpha).
  \end{align*}
  Then, by~\eqref{E_:Step-1}, we see that
  \begin{equation}\label{E:global-stability-proof-PE3}
    \p_t u < \Delta u-\chi_0 m\frac{u^{m-1}}{(1+v)^\beta}\nabla u\cdot\nabla v,\quad \text{for all $(t,x) \in [t^*-\epsilon, t^*+\epsilon] \times B(x(t^*),\epsilon)$}.
  \end{equation}
  Applying the maximum principle for parabolic equations
  to~\eqref{E:global-stability-proof-PE3} (precisely, applying [Chapter 2,
  Theorem 1] in~\cite{friedman:64:partial}
  to~\eqref{E:global-stability-proof-PE3} in the case that $x(t^*) \in \Omega$
  and applying Theorem 2 in~\cite{friedman:58:remarks}
  to~\eqref{E:global-stability-proof-PE3} in the case that $x(t^*) \in \partial
  \Omega$)), we have that either\footnote{Here $t^*\le t_0<T_{\max}$ ensures
    $m\,u^{m-1}$ is bounded on the compact cylinder $[t^*-\epsilon,
    t^*+\epsilon]\times\overline\Omega$ for $0<\epsilon<T_{\max}-t^*$, so the
    drift term is bounded and the parabolic maximum principle applies; the sign
  $\chi_0 \le 0$ is used in~\eqref{E:global-stability-proof-PE3}.}
  \begin{equation}\label{E:proof-global-stable-eq1}
    \sup_{x\in B(x(t^*),\epsilon)} u(t_\epsilon^*,x)> \bar{u}(t^*)\ge \bar{u}(t_0) \quad \text{for some $t_\epsilon^* \in [t^*-\epsilon, t^*)$}
  \end{equation}
  or
  \begin{equation}\label{E:proof-global-stable-eq2}
    \sup_{(t,x) \in [t^*-\epsilon,t^*] \times B(x(t^*),\epsilon)}u(t,x) = \bar{u}(t^*).
  \end{equation}
  Therefore, we can conclude that for some $t_\epsilon^* \in [t^*-\epsilon,
  t^*)$, we have that
  \begin{align*}
    \bar{u}(t_\epsilon^*)\ge \sup_{x\in B(x(t^*),\epsilon)}
    u(t_\epsilon^*,x)\ge \bar{u}(t^*).
  \end{align*}
  \medskip

  \noindent\textbf{Step 2:~} In this step, we establish that
  \begin{align}\label{E_:Step2}
    \bar{u}(t) \ge \bar{u}(t_0) \quad \text{for all $t \in [\delta_0, t_0]$.}
  \end{align}
  Suppose, for contradiction, that there exists $\tilde t \in (\delta_0, t_0)$
  such that $\bar{u}(\tilde t) < \bar{u}(t_0)$. Let $(t', t'') \subset
  (\delta_0, t_0)$ denote the maximal open interval for which $\bar{u}(t) <
  \bar{u}(t_0)$ for all $t \in (t', t'')$. By construction, we have
  $\bar{u}(t'') = \bar{u}(t_0)$. By the result established in Step 1, for any
  $0 < \epsilon \ll 1$, there exists $t_\epsilon \in (t'' - \epsilon, t'')
  \subseteq (t', t'')$ such that $\bar{u}(t_\epsilon) \ge \bar{u}(t_0)$, which
  contradicts the definition of $(t', t'')$. Therefore, $\bar{u}(t) \ge
  \bar{u}(t_0)$ for all $t \in (\delta_0, t_0)$. By the continuity of
  $\bar{u}(t)$, the claim~\eqref{E_:Step2} follows. \medskip

  \noindent\textbf{Step 3.~} Finally, let us prove that $\bar{u}(t)$ is
  nonincreasing on $[\delta_0, t_0]$. Indeed, for any $t', t'' \in (\delta_0,
  t_0)$ with $t' < t''$, repeating the above arguments with $t_0$ being replaced
  by $t''$ and $\delta_0$ being replaced by $t'$, we have $\bar{u}(t') \ge
  \bar{u}(t'')$. It then follows that $\bar{u}(t)$ is nonincreasing on
  $[\delta_0, t_0]$. This completes the proof of Lemma~\ref{L:nonincreasing}.
  \bigskip

  (2) Assume that $a = b = 0$. We prove that $\bar{u}(t)$ is non-increasing on
  $(0,T_{\max})$. Note that, if $u_0(x)$ is a constant function, namely, $u_0(x)
  \equiv \bar{u}_0 \coloneqq \sup_{x\in \overline{\Omega}} u_0(x)$, then
  \[
    \left(u(t,x;u_0), v(t,x;u_0)\right) \equiv \left(\bar{u}_0,
    \frac{\nu}{\mu} \bar{u}_0^\gamma\right)
  \]
  and hence $\bar{u}(t)$ is non-increasing on $(0,T_{\max})$. Without loss of
  generality, we then assume that $u_0(x) \not \equiv \bar{u}_0$.

  Note that for any $t_0\in(0,T_{\max})$, we have either (i) $\bar{v}(t_0) =
  \frac{\nu}{\mu} (\bar{u}(t_0))^\gamma$ or (ii) $\bar{v}(t_0) <
  \frac{\nu}{\mu}(\bar{u}(t_0))^\gamma$. In the case of (i), we have
  \begin{align*}
    0 & =\Delta v(t_0,x)-\mu v(t_0,x)+\nu u^\gamma (t_0,x)         \\
      & \le \Delta v(t_0,x)-\mu v(t_0,x)+\nu (\bar{u}(t_0))^\gamma \\
      & =\Delta \big(v(t_0,x)-\bar{v}(t_0)\big)-\mu \big(v(t_0,x)-\bar{v}(t_0)\big).
  \end{align*}
  By Hopf's Lemma and the strong maximum principle for elliptic equations (See,
  e.g., \cite[Sec. 6.4.2]{evans:10:partial}), $v(t_0,x) \equiv \bar{v}(t_0)$ and
  then $u(t_0,x) \equiv \bar{u}(t_0)$. Note that, by~\eqref{E:constant-mass},
  \begin{align*}
    \int_\Omega u(t,x)dx=\int_\Omega u_0(x)=\bar{u}(t_0)|\Omega|
  \end{align*}
  for any $t \in (0,T_{\max})$. This implies that
  \begin{equation}\label{E:sup-eq1}
    \bar{u}(t) \ge  \bar{u}(t_0) \quad \forall\, t\in (0,t_0).
  \end{equation}

  In the case of (ii), let $x(t_0) \in \overline{\Omega}$ be such that
  $u(t_0,x(t_0)) = \bar{u}(t_0)$. Then
  \begin{align*}
    v(t_0,x(t_0))-\frac{\nu}{\mu} u^\gamma(t_0,x(t_0)) < 0.
  \end{align*}
  By the continuity of $u(t,x)$ and $v(t,x)$, there are $\epsilon > 0$ and $0 <
  \delta_0 < t_0$ such that
  \begin{align*}
    v(t,x)-\frac{\nu}{\mu} u^\gamma(t,x)<0\quad \text{for all $t\in [t_0-\delta_0, t_0]$ and $x\in \overline{\Omega} \cap B(x(t_0),\epsilon)$.}
  \end{align*}
  Recalling~\eqref{E:local-pf-eq7} with $a = b = 0$, we have
  \[
    \p_t u
    = \Delta u - m\chi_0 \frac{u^{m-1}}{(1+v)^\beta}\nabla u\cdot\nabla v
      + \chi_0\beta \frac{u^m}{(1+v)^{\beta+1}}|\nabla v|^2
      - \chi_0\frac{u^m}{(1+v)^\beta}(\mu v-\nu u^\gamma).
  \]
  On the above cylinder we have $\mu v-\nu u^\gamma<0$ and $\chi_0\le0$, so the
  last term is strictly negative, while the $|\nabla v|^2$-term is nonpositive.
  Hence, on this cylinder,
  \[
    \p_t u < \Delta u
    - m\chi_0 \frac{u^{m-1}}{(1+v)^\beta}\nabla u\cdot\nabla v,
  \]
  and, arguing as in Step~1 of the proof of part~(1) with $t^* = t_0$ and
  $x(t^*) = x(t_0)$, we obtain
  \begin{equation} \label{E:sup-eq2}
    \bar{u}(t) \ge \bar{u}(t_0) \quad \forall\, t\in [t_0-\delta_0,t_0].
  \end{equation}
  It follows from~\eqref{E:sup-eq1} and~\eqref{E:sup-eq2} that $\bar{u}(t)$ is
  non-increasing on $(0,T_{\max})$. This completes the proof of
  Lemma~\ref{L:nonincreasing} (2).
\end{proof}

\subsection{Proof of Theorem \ref{T:negative-sensitivity}}

In this subsection, we  prove Theorem~\ref{T:negative-sensitivity}.

\begin{proof}[Proof of Theorem~\ref{T:negative-sensitivity}]
  (1) Assume that $a, b > 0$. One of the following alternatives must hold: (i)
  $\bar{u}(t) \le u^*$ for all $t \in (0, T_{\max})$; (ii) $\bar{u}(t) > u^*$
  for all $t \in (0, T_{\max})$; or (iii) there exist $t_1, t_2 \in (0,
  T_{\max})$ such that $\bar{u}(t_1) > u^*$ and $\bar{u}(t_2) \le u^*$. If case
  (i) holds, then it is immediate that~\eqref{E:bounded-max-eq1} is satisfied.
  If case (ii) holds, then by Lemma~\ref{L:nonincreasing}, $\bar{u}(t)$ is
  nonincreasing on $(0,T_{\max})$. Hence, $\bar{u}(t) \le \Norm{u_0}_\infty$
  for all $t \in (0,T_{\max})$ and then~\eqref{E:bounded-max-eq1} is also
  satisfied. If case (iii) holds, then Lemma~\ref{L:nonincreasing} implies that
  one has to have $t_1 < t^* \le t_2$, where $t^* \coloneqq \sup\{t\in
  (t_1,t_2)\,|\, \bar{u}(t)>u^*\}$. Then by Lemma~\ref{L:nonincreasing} again,
  we see that
  \begin{equation*}
    \bar{u}(t) \le \|u_0\|_\infty\quad \forall\, t\in (0,t^*]\quad {\rm and}\quad
    \bar{u}(t) \le u^*\quad \forall\, t\in (t^*,T_{\max}),
  \end{equation*}
  which implies that~\eqref{E:bounded-max-eq1} also holds. This proves
  Theorem~\ref{T:negative-sensitivity} (1). \medskip

  \noindent (2) When $a = b = 0$, the desired result follows from
  Lemma~\ref{L:nonincreasing} (2). This completes the proof of
  Theorem~\ref{T:negative-sensitivity}.
\end{proof}

\begin{remark}\label{R:chi-negative}
  As mentioned in Remark~\ref{R:global-existence1}, one may try to
  use~\eqref{E:u-lp-eq1} or~\eqref{E:u-lp-eq3} to get the boundedness of
  $\int_\Omega u^p(t,x) dx$ for $p \gg 1$, and then get the
  $L^\infty$-boundedness for $u$. However, it is difficult to prove the
  boundedness of $\int_\Omega u^p(t,x) dx$ for $p \gg 1$ for any $\chi_0 < 0$
  when $\gamma > 1$.

  To see this, in the case that $\chi_0 < 0$, \eqref{E:u-lp-eq3} together with
  Young's inequality implies
  \begin{align}\label{E:u-lp-alt-eq3}
      \frac{1}{p}\frac{d}{dt}\int_\Omega u^p
    & \le \chi_0\nu\frac{p-1}{p+m-1}\left(1-\frac{p+m-1}{p+m-1+\gamma}\right) \int_\Omega \frac{u^{p+m-1+\gamma}}{(1+v)^\beta}\nonumber \\
    & \quad +{C}\int_\Omega \frac{ v^{\frac{p+m-1+\gamma }{\gamma}} }{(1+v)^\beta} +a\int_\Omega u^p-b\int_\Omega u^{p+\alpha}  \quad \forall\, t\in (0,T)
  \end{align}
  for some positive constant $C$. An application of Proposition~\ref{P:Main3}
  (with $p \gg 1$) above shows that for any $\epsilon > 0$, there is $C_\epsilon
  > 0$ such that
  \begin{align}\label{E:u-lp-alt-eq4}
    \int_\Omega \frac{ v^{\frac{p+m-1}{\gamma}+1}}{(1+v)^\beta}
    \le  \epsilon \int_\Omega \frac{ u^{p+m-1+\gamma}}{(1+v)^\beta}
    + C_\epsilon \left(\int_\Omega \frac{v}{(1+v)^{\frac{\beta \gamma}{p+m-1+\gamma}}}\right)^{\frac{p+m-1+\gamma}{\gamma}}.
  \end{align}
  By~\eqref{E:u-lp-alt-eq3} and~\eqref{E:u-lp-alt-eq4}, choosing $0 < \epsilon <
  \chi_0\nu \frac{p-1}{p+m-1} \left(1-\frac{p+m-1}{p+m-1+\gamma}\right)$, we
  have
  \begin{align}\label{E:u-lp-alt-eq5}
    \frac{1}{p}\frac{d}{dt}\int_\Omega u^p
    \le C_\epsilon' \Big(\int_\Omega v\Big)^{\frac{p+m-1+\gamma}{\gamma}}
    +a\int_\Omega u^p-b\int_\Omega u^{p+\alpha}\quad \forall\, t\in (0,T).
  \end{align}
  Therefore, it is essential to prove the boundedness of $\int_\Omega v$.
  However, by the second equation in~\eqref{E:main-PE}, it holds that
  \begin{align*}
    \mu\int_\Omega v = \nu\int_\Omega u^\gamma.
  \end{align*}
  If $\gamma > 1$, it is not easy to see whether $\int_\Omega v$ is bounded and
  hence it is not easy to get the boundedness of $\int_\Omega u^p$ directly. In
  the above, we provide a direct proof of the boundedness of
  $\Norm{u(t,\cdot;u_0)}_\infty$.
\end{remark}

\section{Boundedness and global existence with weak nonlinear cross diffusion}
\label{S:weak-cross-diffusion}

In this section, we study the boundedness and global existence of classical
solutions of~\eqref{E:main-PE} with weak nonlinear cross diffusion in the sense
that $0 < m \le 1$ and $\beta \ge 1$, and prove
Theorem~\ref{T:weak-cross-diffusion}.

\subsection{A technical lemma}

In this subsection, we present a lemma to be used in the proof of
Theorem~\ref{T:weak-cross-diffusion}.

\begin{lemma}\label{L:New-lm}
  Suppose that $u_0$ satisfies~\eqref{E:initial-cond-PE}, and $(u, v)$ is the
  classical solution of~\eqref{E:main-PE} with initial condition $u(0,x;u_0) =
  u_0(x)$.  Then, for any $\varepsilon>0$ and $p>1$, there is $C_{\varepsilon,p}>0$
    such that
    \[
      \int_\Omega u^p
      \le \varepsilon \int_\Omega u^{p-2}|\nabla u|^2
      +C_{\varepsilon, p} \left(\int_\Omega u\right)^{p} \quad \text{for all $0<t<T_{\max}$.}
    \]
\end{lemma}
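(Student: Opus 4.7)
The plan is to substitute $w \coloneqq u^{p/2}$ and reduce the claim to a single Gagliardo--Nirenberg interpolation. Since $u$ is a positive classical solution of~\eqref{E:main-PE}, direct differentiation gives $\nabla w = \tfrac{p}{2}\,u^{(p-2)/2}\nabla u$, so that
\[
  \int_\Omega |\nabla w|^2 = \frac{p^2}{4}\int_\Omega u^{p-2}|\nabla u|^2,
  \qquad \int_\Omega w^2 = \int_\Omega u^p,
  \qquad \int_\Omega w^{2/p} = \int_\Omega u.
\]
Hence the stated inequality is equivalent to proving: for every $\tilde\varepsilon>0$ there exists $\tilde C=\tilde C(\tilde\varepsilon,p)>0$ such that
\[
  \Norm{w}_2^2 \le \tilde\varepsilon\,\Norm{\nabla w}_2^2 + \tilde C\,\Bigl(\int_\Omega w^{2/p}\Bigr)^{\!p},
\]
from which the original claim follows upon choosing $\tilde\varepsilon = 4\varepsilon/p^2$ and $C_{\varepsilon,p} = \tilde C$.

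Next, I would apply the Gagliardo--Nirenberg inequality (Theorem~1 of~\cite{nirenberg:66:extended}) to $w \in W^{1,2}(\Omega)$ with target exponent $r = 2$, gradient exponent $q = 2$, and base exponent $s = 2/p$. The scaling identity $\tfrac{1}{2} = \theta\bigl(\tfrac{1}{2}-\tfrac{1}{N}\bigr) + (1-\theta)\tfrac{p}{2}$ determines the unique
\[
  \theta = \frac{N(p-1)}{N(p-1)+2} \in (0,1),
\]
and yields constants $C_1,C_2>0$ such that
\[
  \Norm{w}_2 \le C_1\,\Norm{\nabla w}_2^{\theta}\,\Norm{w}_{2/p}^{1-\theta} + C_2\,\Norm{w}_{2/p},
\]
where $\Norm{w}_{2/p} \coloneqq \bigl(\int_\Omega w^{2/p}\bigr)^{p/2}$. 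Squaring, using $(a+b)^2 \le 2a^2 + 2b^2$, and applying Young's inequality with conjugate exponents $1/\theta$ and $1/(1-\theta)$ to the mixed product gives, for any $\delta > 0$,
\[
  \Norm{\nabla w}_2^{2\theta}\,\Norm{w}_{2/p}^{2(1-\theta)}
  \le \delta\,\Norm{\nabla w}_2^2 + C_\delta\,\Norm{w}_{2/p}^2,
\]
and choosing $\delta$ sufficiently small relative to $\tilde\varepsilon$, $C_1$, $C_2$ completes the argument.

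The only subtlety I anticipate is that when $p>2$ one has $s = 2/p < 1$, so $\Norm{\cdot}_{2/p}$ is a quasinorm rather than a genuine norm. However, Nirenberg's original formulation~\cite{nirenberg:66:extended} admits $s \in (0,\infty]$ provided the scaling identity and the constraint $0 \le \theta \le 1$ are satisfied, both of which hold here; the right-hand side is simply interpreted as the $p/2$-th power of $\int_\Omega w^{2/p}$, which is exactly the form appearing in the statement. Beyond this careful bookkeeping of the interpolation exponents, the proof is essentially algebraic, and I expect the argument to be routine; this explains why the paper announces the result without writing out the details.
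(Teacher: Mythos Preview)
Your argument is correct, and it follows essentially the same skeleton as the paper: both substitute $w=u^{p/2}$ and pass through an interpolation inequality for $w$. The organization differs, however. The paper does \emph{not} invoke Gagliardo--Nirenberg with base exponent $2/p$; instead it (i) applies Ehrling's lemma (equivalently GN with base exponent $1$) to obtain $\|w\|_2^2\le \delta\|\nabla w\|_2^2 + C_\delta\|w\|_1^2$, (ii) uses the standard Lebesgue interpolation $\|u\|_{p/2}\le \|u\|_1^{1/(p-1)}\|u\|_p^{(p-2)/(p-1)}$ to rewrite $\|w\|_1^2=\|u\|_{p/2}^p$, and (iii) applies Young's inequality to absorb the resulting $\int_\Omega u^p$ back into the left-hand side. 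Your direct route is shorter because it compresses (i)--(iii) into a single interpolation step.

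The tradeoff is the point you yourself flag: for $p>2$ your base exponent $2/p$ is strictly less than $1$. Be careful with the citation here. Nirenberg's 1966 note \cite{nirenberg:66:extended} extends the interpolation inequality to \emph{negative H\"older exponents}, not to $L^s$ quasinorms with $s<1$; so it does not directly justify your inequality as stated. The inequality you need is nonetheless true, and one clean way to recover it is exactly the paper's two-step route (Ehrling with base $L^1$, then interpolate $\|w\|_1$ between $\|w\|_2$ and $\|w\|_{2/p}$ via H\"older since $2/p<1<2$, then absorb). So either supply a reference that genuinely covers the $s<1$ case, or note that your one-step GN can be unpacked into the paper's argument.

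One small correction: the paper does give a full written proof of this lemma, so your last sentence is inaccurate.
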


\begin{proof}
  Set $w = u^{p/2}$. Then~\eqref{E:Norm-Gradient-w} holds. First, by Ehrling's
  lemma (see the reference before~\eqref{E:new-v-estimate-eq3})
  and~\eqref{E:Norm-Gradient-w}, for any $0 < \varepsilon < 1$, there is
  $C_{\varepsilon,p}' > 0$ such that
  \[
    \int_\Omega u^p
    = \Norm{w}_2^2
    \le \frac{\varepsilon}{1+\varepsilon} \frac{4}{p^2} \Norm{\nabla w}_2^2+C_{\varepsilon,p}' \Norm{w}_1^2
    = \frac{\varepsilon}{1+\varepsilon} \int_\Omega u^{p-2}|\nabla u|^2 + C_{\varepsilon,p}' \left(\int_\Omega u^{p/2}\right)^2.
  \]
  Next, by the $L^p$-$L^q$ interpolation theorem (see, e.g., Theorem~6.10
  of~\cite{folland:99:real}),
  \[
    \Norm{u}_{p/2}\le \Norm{u}_1^\lambda \Norm{u}_p^{1-\lambda},  \quad \text{with $\lambda = \frac{1}{p-1}$.}
  \]
  It then follows that
  \[
    \left(\int_\Omega u^{p/2}\right)^2
    = \Norm{u}_{p/2}^p
    \le \Norm{u}_1^{\lambda p} \Norm{u}_p^{(1-\lambda)p}
    = \left(\int_\Omega u\right)^{\frac{p}{p-1}} \left(\int_\Omega u^p\right)^{\frac{p-2}{p-1}}.
  \]
  Now, by Young's inequality, we see that
  \[
    C'_{\varepsilon,p} \left(\int_\Omega u\right)^{\frac{p}{p-1}}
    \left(\int_\Omega u^p\right)^{\frac{p-2}{p-1}}
    \le \frac{\varepsilon}{1+\varepsilon} \int_\Omega u^{p} + \underbrace{\frac{(p-2)^{p-2}(1+\varepsilon)^{p-2}C_{\varepsilon,p}'^{p-1}}{\varepsilon^{p-2}(p-1)^{p-1}}}_{ \displaystyle \eqqcolon C_{\varepsilon,p}''} \left(\int_\Omega u\right)^p,
  \]
  Using this last estimate in the inequality above we get
  \begin{align*}
    \int_\Omega u^p
    \le \frac{\varepsilon}{1+\varepsilon}\int_\Omega u^{p-2}|\nabla u|^2
    + \frac{\varepsilon}{1+\varepsilon}\int_\Omega u^p + C_{\varepsilon,p}''\left(\int_\Omega u\right)^p.
  \end{align*}
  It then follows that
  \[
    \int_\Omega u^p
    \le \varepsilon\int_\Omega u^{p-2}|\nabla u|^2
        + C_{\varepsilon, p} \left(\int_\Omega u\right)^p, \quad
        \text{with $C_{\varepsilon, p} \coloneqq C''_{\varepsilon,p} (1+\varepsilon)$.}
  \]
  This proves Lemma~\ref{L:New-lm}.
\end{proof}

\subsection{Proof of~Theorem~\ref{T:weak-cross-diffusion}~(1)}
\label{SS:global-exist_4}

In this subsection, we prove Theorem~\ref{T:weak-cross-diffusion} (1).

\begin{proof}[Proof of Theorem~\ref{T:weak-cross-diffusion} (1)]
  For all $p > 1$, by Young's inequality with any $n > -1$, there holds
  \begin{align*}
    & (p-1)\chi_0\int_\Omega \frac{u^{p+m-2}}{(1+v)^\beta}\nabla u\!\cdot\!\nabla v
    & \le \frac{(p-1)}{n+1} \int_\Omega u^{p-2}|\nabla u|^2 +
          \frac{(p-1){(n+1)} \chi_0^2}{4} \int_\Omega u^{p+2m-2}\frac{|\nabla v|^2}{(1+v)^{2\beta}}.
  \end{align*}
  On the other hand, applying \eqref{E:u-lp-eq2} with $p$ being replaced by
  $p+m-1$, we have
  \begin{align*}
     \int_\Omega u^{p+2m-2}\frac{|\nabla v|^2}{(1+v)^{2\beta}}
     & \le \int_\Omega \frac{u^{p+2m-2}}{(1+v)^{\beta+1}}|\nabla v|^2  \tag{since $\beta \ge 1$}                                                                   \\
     & \le \frac{p+2m-2}{\beta} \int_\Omega \frac{u^{p+2m-3}}{(1+v)^{\beta }}\nabla u\cdot\nabla v + \frac{\mu}{\beta} \int_\Omega \frac{u^{p+2m-2}v}{(1+v)^\beta} \\
     & \le \frac{p+2m-2}{\beta} \int_\Omega \frac{u^{p+2m-3}}{(1+v)^{\beta }}\nabla u\cdot\nabla v + \frac{\mu\Theta_{\beta-1}}{\beta} \int_\Omega u^{p+2m-2},
  \end{align*}
  where the last inequality is due to Lemma~\ref{L:v-entropy} and the condition
  $\beta > 1$ is critical in this step. Set
  \begin{align*}
    C_0 \coloneqq \frac{(p-1) (n+1) \chi_0^2}{4}.
  \end{align*}
  Hence, by Young's inequality, we have
  \begin{align}\label{E:ineqC1}
    \begin{aligned}
    C_0  \int_\Omega u^{p+2m-2}\frac{|\nabla v|^2}{(1+v)^{2\beta}}
    \le & \:\frac{p-1}{n+1}\int_\Omega u^{p-2}|\nabla u|^2
          + C_1 \int_\Omega u^{p+4(m-1)} \frac{|\nabla v|^2}{(1+v)^{2\beta}} \\
        & + C_0\frac{\mu \Theta_{\beta-1}}{\beta} \int_\Omega u^{p+2m-2},
    \end{aligned}
  \end{align}
  where
  \begin{align*}
    C_1 \coloneqq \frac{(p-1)\,(n+1)^3\,\chi_0^4\,(p+2m-2)^2}{64\,\beta^2}.
  \end{align*}

  Now, using the fact that $m < 1$, we obtain that
  \begin{align}\label{E:new-eq2}
    C_0 \int_\Omega u^{p+2m-2}\frac{|\nabla v|^2}{(1+v)^{2\beta}}
    \le \frac{p-1}{n+1}\int_\Omega u^{p-2}|\nabla u|^2 + C_1 \int_\Omega u^{p+4(m-1)}\frac{|\nabla v|^2}{(1+v)^{2\beta}} +\int_\Omega u^p + C_1',
  \end{align}
  where
  \begin{align*}
    C_1' \coloneqq \frac{2(1-m)}{p}
      \left(\frac{(p-1)\,(n+1)\,\chi_0^2\,\mu\,\Theta_{\beta-1}}{4\beta}\right)^{\!\frac{p}{2(1-m)}} |\Omega|.
  \end{align*}
  For general $k = 1, \cdots, n-1$, we see that
  \begin{align*}
    C_{k-1}\int_{\Omega} u^{p+2^k(m-1)}\frac{|\nabla v|^2}{(1+v)^{2\beta}}
    & \le \frac{p-1}{n+1}\int_{\Omega} u^{p-2}|\nabla u|^2
    +C_k \int_{\Omega} u^{p+2^{k+1}(m-1)}\frac{|\nabla v|^2}{(1+v)^{2\beta}}+ \int_{\Omega} u^{p} +C_k'
  \end{align*}
  with
  \begin{align*}
    C_k  \coloneqq \frac{n+1}{4(p-1)}\left(\frac{p + 2^k(m-1)}{\beta}\right)^{\!2} C_{k-1}^2 \quad \text{and} \quad
    C_k' \coloneqq \frac{2^k(1-m)}{p} \left(\frac{C_{k-1}\mu \Theta_{\beta-1}}{\beta}\right)^{\!\frac{p}{2^k(1-m)}} |\Omega|.
  \end{align*}
  Next, repeating the above arguments we see that
  \begin{align*}
    (p-1)\chi_0
    & \int_\Omega \frac{u^{p+m-2}}{(1+v)^\beta}\nabla u\!\cdot\!\nabla v                                                                                        \\
    & \le \frac{(p-1)}{{n+1}} \int_\Omega u^{p-2}|\nabla u|^2 + C_0 \int_\Omega u^{p+2(m-1)}\frac{|\nabla v|^2}{(1+v)^{2\beta}}                                 \\
    & \le \frac{2(p-1)}{{n+1}} \int_\Omega u^{p-2}|\nabla u|^2 + C_1 \int_\Omega u^{p+2^2(m-1)}\frac{|\nabla v|^2}{(1+v)^{2\beta}} + \int_{\Omega} u^{p} + C_1' \\
    & \vdots                                                                                                                                                    \\
    & \le \frac{n(p-1)}{n+1} \int_\Omega u^{p-2}|\nabla u|^2 + C_{n-1} \int_\Omega u^{p+2^n(m-1)}\frac{|\nabla v|^2}{(1+v)^{2\beta}} + (n-1) \int_{\Omega} u^{p} + \sum_{k=1}^{n-1} C_k'.
  \end{align*}
  Set $C' = \sum_{k=1}^{n-1} C_k'$. Choose $n \ge 1$ and $p > 1$ such that
  \begin{equation}\label{E:n-choice}
    p \coloneqq 2^n(1-m)> \max\{\gamma N,\, N\}.
  \end{equation}
  It then follows from~\eqref{E:u-lp-eq1} and the above inequality that
  \begin{align}\label{E:new-eq3}
    \frac{1}{p}\frac{d}{dt}\int_\Omega u^p
    \le -\frac{p-1}{n+1}\int_\Omega u^{p-2}|\nabla u|^2+ C_{n-1} \int_{\Omega} \frac{|\nabla v|^2}{(1+v)^{2\beta}}
    +(a+n-1)\int_\Omega u^p + C',
  \end{align}
  where we have removed the integral $-b\int_\Omega u^{p+\alpha} \le 0$ since
  we allow $b = 0$. Since $p > \gamma N$, we may select any $q \in (1, N)$ such
  that $\gamma q < p$. By Young's inequality,
  \[
    \int_{\Omega} \frac{|\nabla v|^2}{(1+v)^{2\beta}}
    \le \int_{\Omega} \frac{|\nabla v|^{2q}}{(1+v)^{2\beta q}} + C'',
    \quad \text{with $C'' \coloneqq (q-1) q^{-q/(q-1)}|\Omega|$.}
  \]
  Since $\beta \ge 1$, Proposition~\ref{P:Main1} yields
  \[
    \int_{\Omega} \frac{|\nabla v|^{2q}}{(1+v)^{2\beta q}}
    \le \int_{\Omega} \frac{|\nabla v|^{2q}}{(1+v)^{(1+\beta) q}}
    \le \Theta_\beta^{q}\, M^*(N,q,\mu,\nu) \int_\Omega u^{\gamma q}.
  \]
  By Lemma~\ref{L:New-lm}(1) and Proposition~\ref{P:Main-0}, there is $C'''>0$
  such that
  \[
    (a+n)\int_\Omega u^p \le \frac{p-1}{n+1}\int_\Omega u^{p-2}|\nabla u|^2 + C'''.
  \]
  Consequently, plugging the above estimates into~\eqref{E:new-eq3} we obtain
  \begin{align}\label{E:new-eq4}
    \frac{1}{p}\frac{d}{dt}\int_\Omega u^p
    \le C_{n-1} \Theta_\beta^q \, M^* \int_\Omega u^{\gamma q} -\int_\Omega u^p
     +  \left(C'''+ C_{n-1} C'' + C'\right),
  \end{align}
  with $\gamma q < p$. Therefore, by Gronwall's inequality, we see that
  $\int_\Omega u^p(t,x;u_0)$ stays bounded on $[0, T_{\max}(u_0))$. Finally, by
  Proposition~\ref{P:Lp->GlobalExist} and our choice of $p$
  in~\eqref{E:n-choice}, namely, $p > (1 \vee \gamma) N$, we can conclude
  that~\eqref{E:bounded-finite} holds. This proves
  Theorem~\ref{T:weak-cross-diffusion}(1).
\end{proof}

\subsection{Proof of Theorem~\ref{T:weak-cross-diffusion} (2)}\label{SS:global-exist_7}

In this subsection, we prove Theorem~\ref{T:weak-cross-diffusion}(2).

\begin{proof}[Proof of Theorem~\ref{T:weak-cross-diffusion} (2)] We assume $m =
  1$, $\beta \ge 1$, and $\chi_0 < \frac{2(2\beta-1)}{\max\{2, \gamma N\}}$.
  Let $(u, v)$ be a positive classical solution on $(0, T_{\max}(u_0))$ and let
  $p > 1$. We have two observations: (1) Since our result holds for all $b \ge
  0$, we cannot rely on the logistic term to control the $L^p$-norm of $u$; (2)
  The case $\chi_0 \le 0$ has been covered by
  Theorem~\ref{T:negative-sensitivity}. Therefore, without loss of generality,
  we assume that
  \begin{align*}
    b = 0 \quad \text{and} \quad \chi_0 > 0.
  \end{align*}
  Applying~\eqref{E:u-lp-eq1} with $m = 1$ and omitting the integral $b\int
  u^{p+\alpha}$, we see that
  \begin{equation}\label{E_:lp-id}
    \frac{1}{p}\frac{d}{dt}\int_\Omega u^p
    \le -(p-1)\int_\Omega u^{p-2}|\nabla u|^2
      + (p-1)\chi_0\int_\Omega \frac{u^{p-1}}{(1+v)^\beta}\nabla u\cdot\nabla v
      + a\int_\Omega u^p.
  \end{equation}
  We proceed with the proof in two steps. As the argument develops, we will
  determine the conditions on $\chi_0$ and $\beta$. \medskip

  \noindent\textbf{Step 1.~} In this step, we prove that if $\chi_0 \le 2 \beta
  -1$ and $\beta \ge 1$, then 
  \begin{equation}\label{E:claim-eq1}
    \frac{1}{p}\frac{d}{dt}\int_\Omega u^p
    \le - \int_\Omega u^p + C \left(\int_\Omega u\right)^p,
    \quad \text{for all $1 < p < (2\beta-1)/\chi_0$.}
  \end{equation}
  Then Gronwall's inequality immediately implies that
  \begin{equation}
    \label{E:new-cross-eq0}
    \sup_{t\in[0,T_{\max}(u_0))}\int_\Omega u^p(t,\cdot)\;<\;\infty
    \quad \text{for all $1 < p < (2\beta-1)/\chi_0$.}
  \end{equation}

  To prove~\eqref{E:claim-eq1}, by Young's inequality, the cross term
  in~\eqref{E_:lp-id} can be estimated as follows:
  \begin{align}\label{E_:young-cross}
    (p-1)\chi_0\int_\Omega \frac{u^{p-1}}{(1+v)^\beta}\,\nabla u\!\cdot\!\nabla v
    & \le \frac{p-1}{2} \int_\Omega u^{p-2}|\nabla u|^2
    + \frac{(p-1)\chi_0^2}{2} \int_\Omega \frac{u^{p}|\nabla v|^2}{(1+v)^{2\beta}}.
  \end{align}
  On the other hand, by removing the last term and then rearranging terms
  in~\eqref{E:u-lp-eq2} with $\beta$ being replaced by $2\beta-1$, together with
  setting $m = 1$, we have
  \begin{align*}
    & \frac{(p-1)\chi_0(2\beta-1)}{p} \int_\Omega \frac{u^{p}}{(1+v)^{2\beta}}|\nabla v|^2                                                                   \\
    & \le (p-1)\chi_0\int_\Omega \frac{u^{p-1}}{(1+v)^{2\beta-1}}\nabla u\cdot\nabla v +\frac{(p-1)\chi_0\mu}{p}\int_\Omega \frac{u^{p}}{(1+v)^{2\beta-1}} v \\
    & \le \frac{p-1}{2}\int_\Omega u^{p-2}|\nabla u|^2
    +\frac{(p-1)\chi_0^2}{2}\int_\Omega \frac{u^p}{(1+v)^{4\beta-2}}|\nabla v|^2
    + \frac{(p-1) \Theta_{2(\beta-1)} \chi_0\mu }{p} \int_\Omega u^p.
  \end{align*}
  In the last equality, we have applied Lemma~\ref{L:v-entropy} for the second
  integral and~\eqref{E_:young-cross} with $\beta$ replaced by $2\beta - 1$ for
  the first. Using the facts that $(1+v)^{-(4\beta-2)} \le (1+v)^{-2\beta}$ and
  $\Theta_{2(\beta-1)} \le 1$(because $\beta \ge 1$), we see that
  \begin{align*}
    \left(\frac{(p-1)\chi_0{ (2\beta-1)}}{p}-\frac{(p-1)\chi_0^2}{2}\right) \int_\Omega
    \frac{u^{p}}{(1+v)^{2\beta}}|\nabla v|^2
    & \le \frac{p-1}{2} \int_\Omega u^{p-2}|\nabla u|^2  +\frac{(p-1)\chi_0\mu }{p} \int_\Omega u^{p}.
  \end{align*}
  Notice that
  \[
    \frac{(p-1)\chi_0^2}{2}
    < \frac{(p-1)\chi_0(2\beta-1)}{p}-\frac{(p-1)\chi_0^2}{2}
    \quad \Longleftrightarrow \quad
    \chi_0 < \frac{2 \beta-1}{p}.
  \]
  Since $p< (2\beta-1)/\chi_0$, we have $\chi_0 < (2\beta-1) / p$. Hence, there
  is $0<\varepsilon<1$
  such that
  \[
    \frac{(p-1)\chi_0^2}{2}
    \le \left(1-\frac{2\varepsilon}{p-1}\right) \left(  \frac{(p-1)\chi_0(2\beta-1)}{p}-\frac{(p-1)\chi_0^2}{2}\right).
  \]
  Hence,
  \begin{equation}\label{E_:absorb-ineq}
    \frac{(p-1)\chi_0^2}{2}\int_\Omega \frac{u^{p}|\nabla v|^2}{(1+v)^{2\beta}}
    \;\le\; \left(1-\frac{2\varepsilon}{p-1}\right) \frac{p-1}{2}\int_\Omega u^{p-2}|\nabla u|^2
    + \left(1-\frac{2\varepsilon}{p-1}\right)\frac{(p-1)\chi_0\mu }{p}\int_\Omega u^p.
  \end{equation}
  Insert~\eqref{E_:young-cross} into~\eqref{E_:lp-id} and then
  apply~\eqref{E_:absorb-ineq} to see that 
  \[
    \frac{1}{p}\frac{d}{dt}\int_\Omega u^p
    \le -\varepsilon \int_\Omega u^{p-2}|\nabla u|^2+ \left(1+a + \frac{(p-1)\chi_0\mu }{p}\right)\int_\Omega u^p-\int_\Omega u^p.
  \]
  By Lemma \ref{L:New-lm}, there is $C_{\varepsilon,p} > 0$ such that
  \[
    \frac{1}{p}\frac{d}{dt}\int_\Omega u^p
    \le -\int_\Omega u^p + C_{\varepsilon,p} \left(\int_\Omega u\right)^p,
  \] 
  which proves~\eqref{E:claim-eq1}. \bigskip

  \noindent\textbf{Step 2.~} In this step, we apply the bootstrap argument given
  in Lemma~\ref{L:bootstrap} and Corollary~\ref{C:bootstrap}. Since $\chi_0 <
  \frac{2(2\beta-1)}{\max\{2, \gamma N\}}$, we have
  \[
      \frac{2\beta-1}{\chi_0}
    > \frac{\max\{2,\gamma N\}}{2}
    = \max\left\{1,\frac{\gamma N}{2}\right\}.
  \]
  Choose $p_0$ such that
  \[
    \max\left\{1,\frac{\gamma N}{2}\right\} < p_0 < \frac{2\beta-1}{\chi_0}.
  \]
  Then~\eqref{E:new-cross-eq0} implies that
  \[
    \sup_{t\in[0,T_{\max}(u_0))}\int_\Omega u^{p_0}(t,\cdot)\,dx < \infty.
  \]
  Next, fix $p>1$ and $\epsilon>0$. By Young's inequality, the cross term
  in~\eqref{E_:lp-id} can be estimated as follows:
  \begin{align}\label{E:new-cross-eq1}
    \chi_0\int_\Omega \frac{u^{p-1}}{(1+v)^\beta}\,\nabla u\!\cdot\!\nabla v
      & \le \epsilon  \int_\Omega u^{p-2}|\nabla u|^2
      + \frac{\chi_0^2}{4 \epsilon} \int_\Omega \frac{u^{p}|\nabla v|^2}{(1+v)^{2\beta}}.
  \end{align}
  By Young's inequality with conjugate exponents $\frac{p+\gamma}{p}$ and
  $\frac{p+\gamma}{\gamma}$, the last term in the above can be estimated as
  follows:
  \begin{equation}\label{E:new-cross-eq2}
    \int_\Omega \frac{u^p|\nabla v|^2}{(1+v)^{2\beta}}
    \le \frac{1}{2}\int_\Omega u^{p+\gamma}
       +\frac{\gamma}{p+\gamma}\left(\frac{2p}{p+\gamma}\right)^{p/\gamma}
         \int_\Omega \frac{|\nabla v|^{\frac{2(p+\gamma)}{\gamma}}}{(1+v)^{\frac{2\beta(p+\gamma)}{\gamma}}}.
  \end{equation}
  Let $q \coloneqq \frac{p+\gamma}{\gamma} > 1$. Applying
  Proposition~\ref{P:Main1} with $p = q$ and $\beta$ replaced by $2\beta-1$
  yields
  \[
    \int_\Omega \frac{|\nabla v|^{2q}}{(1+v)^{2\beta q}}
    = \int_\Omega \frac{|\nabla v|^{2q}}{(1+v)^{(1+(2\beta-1))q}}
    \le \Theta_{2\beta-1}^{q}\, M^*(N,q,\mu,\nu) \int_\Omega u^{\gamma q}.
  \]
  Therefore, by~\eqref{E:new-cross-eq1} and~\eqref{E:new-cross-eq2}, we have
  \begin{align}\label{E:new-cross-eq3}
    \chi_0\int_\Omega \frac{u^{p-1}}{(1+v)^\beta}\,\nabla u\!\cdot\!\nabla v
      & \le \epsilon  \int_\Omega u^{p-2}|\nabla u|^2
      + \underbrace{\frac{\chi_0^2}{4\epsilon}\left(\frac{1}{2}
          + \Theta_{2\beta-1}^{q}\, M^*(N,q,\mu,\nu)\frac{\gamma}{p+\gamma}\left(\frac{2p}{p+\gamma}\right)^{p/\gamma}\right)}_{\displaystyle \eqqcolon C_{\epsilon,p}}
        \int_\Omega u^{p+\gamma}.
  \end{align}
  The inequality~\eqref{E:new-cross-eq3} verifies~\eqref{E:C-bootstrap} in
  Corollary~\ref{C:bootstrap} with $\rho = \gamma$ (and $m = 1$). Together with
  the above $L^{p_0}$ bound, Corollary~\ref{C:bootstrap} implies that for every
  $p > 1$,
  \[
    \limsup_{t\to T_{\max}-}\int_\Omega u^p<\infty.
  \]
  In particular, by choosing $p > \max\{N,\gamma N\}$,
  Proposition~\ref{P:Lp->GlobalExist} yields~\eqref{E:bounded-finite}. Since $m
  = 1\ge 1$, it follows from~\eqref{E:local-infty} in
  Proposition~\ref{P:local-existence} that $T_{\max}(u_0) = \infty$. This
  completes the proof of Theorem~\ref{T:weak-cross-diffusion}(2).
\end{proof}

\section{Boundedness and global existence with relatively strong logistic source}
\label{S:strong-logistic-source}

In this section, we study the boundedness and global existence of classical
solutions of~\eqref{E:main-PE} with relative strong logistic source and prove
Theorem~\ref{T:strong-logistic-source}. Again, throughout this section, we
assume that $u_0$ satisfies~\eqref{E:initial-cond-PE}, and $(u, v)$ is the
classical solution of~\eqref{E:main-PE} with initial condition $u(0,x;u_0) =
u_0(x)$.

\subsection{Proof of Theorem~\ref{T:strong-logistic-source} (1) under condition~(i)}
\label{SS:global-exist_2}

In this subsection, we prove Theorem~\ref{T:strong-logistic-source} (1) under
condition~(i). We prove it by showing the boundedness of $\int_\Omega u^p(t,
x; u_0) dx$ on $[0, T_{\max}(u_0))$ for any $p > 1$. Proposition~\ref{P:Main1}
plays a key role in the proof of the boundedness of $\int_\Omega u^p(t, x; u_0)
dx$.

\begin{proof}[Proof of Theorem~\ref{T:strong-logistic-source} (1) under condition
  (i)] First, by Proposition~\ref{P:Lp->GlobalExist}, it suffices to prove the
  $L^p$-boundedness of $u(t,x;u_0)$ for some $p > \max\{N, mN, \gamma N\}$. By
  Theorem~\ref{T:negative-sensitivity}, we only need to consider $\chi_0 > 0$. In the
  following, we assume that $\chi_0 > 0$. By~\eqref{E:u-lp-eq3}, for all $p >
  1$, we have
  \begin{align*}
   \frac{1}{p}\frac{d}{dt}\int_\Omega u^p
   \le & -(p-1)\int_\Omega u^{p-2}|\nabla u|^2  + \frac{(p-1)\chi_0\beta}{p+m-1}\int_\Omega \frac{u^{p+m-1}}{(1+v)^{\beta+1}}|\nabla v|^2 \\
       & \quad +\frac{(p-1)\chi_0\nu}{p+m-1}\int_\Omega \frac{u^{p+m+\gamma -1}}{(1+v)^\beta} +a\int_\Omega u^p-b\int_\Omega u^{p+\alpha}\quad \forall\, t\in (0,T_{\max}(u_0)).
  \end{align*}
  By the assumption $\alpha > m + \gamma - 1$ and Young's inequality, for any
  $\epsilon > 0$, there exists a constant $C_\epsilon \ge 1$ such that
  \begin{gather*}
    \frac{(p-1)\chi_0\nu}{p+m-1}\int_\Omega \frac{u^{p+m+\gamma -1}}{(1+v)^\beta} \le \epsilon \int_\Omega u^{p+\alpha}+C_\epsilon \quad \text{and} \\
    \frac{(p-1)\chi_0\beta}{p+m-1}\int_\Omega \frac{u^{p+m-1}}{(1+v)^{\beta+1}}|\nabla v|^2 \le \epsilon \int_\Omega u^{p+\alpha} + C_\epsilon\int_\Omega \frac{|\nabla v|^{\frac{2(p+\alpha)}{\alpha+1-m}}}{(1+v)^{(\beta+1)\frac{p+\alpha}{\alpha+1-m}}}.
  \end{gather*}
  By Proposition~\ref{P:Main1}, there is a constant $M > 0$ such that
  \begin{align*}
    \int_\Omega \frac{|\nabla v|^{\frac{2(p+\alpha)}{\alpha+1-m}}}{(1+v)^{(\beta+1) \frac{p+\alpha}{\alpha+1-m}}}
    \le \int_\Omega \frac{|\nabla v|^{\frac{2(p+\alpha)}{\alpha+1-m}}}{(1+v)^{\frac{p+\alpha}{\alpha+1-m}}}
    \le M \int_\Omega u^{\gamma \frac{p+\alpha}{\alpha+1-m}}.
  \end{align*}
  By $\alpha > m + \gamma - 1$ again, we have
  $\gamma\frac{p+\alpha}{\alpha+1-m}<p+\alpha$. We can apply Young's inequality
  again to obtain to see that for some constant $C_\epsilon' > 0$,
  \begin{align*}
    \int_\Omega u^{\gamma \frac{p+\alpha}{\alpha+1-m}} \le \frac{\epsilon}{C_\epsilon M} \int_\Omega u^{p+\alpha} + C_\epsilon'.
  \end{align*}
  Therefore, there is a constant $C_\epsilon''>0$ such that
  \begin{align*}
    \frac{1}{p} \cdot \frac{d}{dt} \int_{\Omega} u^p\le a\int_\Omega u^p-(b-3\epsilon) \int_\Omega u^{p+\alpha} + C_\epsilon''.
  \end{align*}
   By Lemma~\ref{L:global-existence}, $\int_\Omega u^p(t,x;u_0) dx$ stays
   bounded on $[0, T_{\max}(u_0))$ for all $p > 1$, and then by
   Proposition~\ref{P:Lp->GlobalExist}, $\limsup_{t\to T_{\max}(u_0)-}
   \Norm{u(t,\cdot;u_0)}_\infty < \infty$. This proves
   Theorem~\ref{T:strong-logistic-source} (1) assuming the condition~(i) holds.
\end{proof}

\subsection{Proof of Theorem~\ref{T:strong-logistic-source} (1) under condition~(ii)}
\label{SS:global-exist_3}

In this subsection, we prove Theorem~\ref{T:strong-logistic-source} (1) under
condition~(ii). We will also prove it via establishing the boundedness of
$\int_\Omega u^p$ on $[0, T_{\max}(u_0))$ for all $p > 1$ analogously to the
proof of Theorem~\ref{T:strong-logistic-source} (1) under condition~(ii) above.

\begin{proof}[Proof of Theorem~\ref{T:strong-logistic-source} (1) under condition
  (ii)]
By Theorem~\ref{T:negative-sensitivity} (1), we
  only need to consider the case that $\chi_0 > 0$. Fix an arbitrary $p > 1$.
  The condition $\alpha > 2m + \gamma - 2$ implies that there is a constant
  $\epsilon \in (0, \alpha)$ such that
  \begin{equation}\label{E:param-1}
    \alpha+2-2m-\epsilon > \gamma \quad \text{and} \quad
    p+2m -2 < p+\alpha-\epsilon.
  \end{equation}
  Applying Young's inequality twice, we see that there is a constant $C > 0$
  such that
  \begin{align*}
    (p-1)\chi_0\int_\Omega \frac{u^{p+m-2}}{(1+v)^\beta}\nabla u\cdot\nabla v
    \le (p-1)\int_\Omega u^{p-2}|\nabla u|^2+\frac{(p-1)\chi_0^2}{4}\int_\Omega u^{p+2m -2}\frac{|\nabla v|^2}{(1+v)^{2\beta}} & \\
    \le (p-1)\int_\Omega u^{p-2}|\nabla u|^2+C\int_\Omega u^{p+\alpha-\epsilon}+C\int_\Omega \frac{|\nabla v|^{2 \frac{p+\alpha -\epsilon}{\alpha+2-2m-\epsilon}}}{(1+v)^{2\beta \frac{p+\alpha-\epsilon}{\alpha +2-2m-\epsilon}}}, &
  \end{align*}
  where the second inequality is due to the second inequality
  in~\eqref{E:param-1}. By the condition $\beta \ge 1/2$, set $\beta' =
  2\beta-1 \ge 0$. Applying Proposition~\ref{P:Main1},
  inequality~\eqref{E:Main-P1}, with exponent
  $\frac{p+\alpha-\epsilon}{\alpha+2-2m-\epsilon}$ and $\beta'$, there is a
  constant $M > 0$ such that
  \begin{align*}
    \int_\Omega \frac{|\nabla v|^{2 \frac{p+\alpha -\epsilon}{\alpha+2-2m-\epsilon}}}{(1+v)^{2\beta \frac{p+\alpha-\epsilon}{\alpha +2-2m-\epsilon}}}
    &\le \Theta_{\beta'}^{\frac{p+\alpha-\epsilon}{\alpha+2-2m-\epsilon}}\, M\int_\Omega u^{\gamma \frac{p+\alpha -\epsilon}{\alpha+2-2m-\epsilon}}
    \le M \int_\Omega u^{p+\alpha-\epsilon} + M |\Omega|,
  \end{align*}
  where the last inequality follows from the first inequality
  in~\eqref{E:param-1} and from the observation that $0 \le a \le b$ implies
  $x^a \le 1 + x^b$ for all $x \ge 0$. It then follows from~\eqref{E:u-lp-eq1}
  that
  \begin{align*}
    \frac{1}{p}\frac{d}{dt}\int_\Omega u^p\le C(1+M)\int_\Omega u^{p+\alpha-\epsilon} +a\int_\Omega u^p-b \int_\Omega u^{p+\alpha}+ C M |\Omega|.
  \end{align*}
  By Lemma~\ref{L:global-existence}, $\int_\Omega u^p(t,x;u_0)dx$ stays bounded
  on $[0, T_{\max}(u_0))$ for all $p > 1$, and then by
  Proposition~\ref{P:Lp->GlobalExist}, $\limsup_{t \to T_{\max}(u_0)-}
  \Norm{u(t, \cdot; u_0)}_\infty < \infty$.
\end{proof}

\subsection{Proof of~Theorem~\ref{T:strong-logistic-source}~(1) under condition~(iii)}
\label{SS:global-exist_5}

In this subsection, we prove Theorem~\ref{T:strong-logistic-source} (1) under the
condition~(iii). Our approach is to first establish the boundedness of
$\int_\Omega u^{p_0}$ for some $p_0 > \frac{N\alpha}{2}$, and then, using the
Gagliardo--Nirenberg inequality, to show the boundedness of $\int_\Omega u^p$
for all $p > 1$.

\begin{proof}[Proof of Theorem~\ref{T:strong-logistic-source} (1) under
  condition~(iii)]
  Again, by Theorem~\ref{T:negative-sensitivity}, we only need to consider the
  case that $\chi_0 > 0$. Fix an arbitrary $p \ge 1$. We will proceed in two
  steps. \medskip

  \noindent\textbf{Step 1:~} In this step, we prove that there is $p_0 > \max\{1,
  \frac{N\alpha}{2}\}$ such that $\int_\Omega u^{p_0}(t, x; u_0) dx$ stays
  bounded on $[0, T_{\max}(u_0))$; see~\eqref{E:proof-thm1-4-eq3} below. Recall
  that $\alpha = m + \gamma - 1$ and $\beta \ge 0$. By $\alpha = m + \gamma - 1$
  and $\gamma > 0$, we have $p + m - 1 < p + \alpha$. By
  Lemma~\ref{L:v-entropy}, we see that
  \[
    \beta\int_\Omega u^{p+m-1} \frac{|\nabla v|^2}{(1+v)^{\beta+1}}
    \le \Psi_\beta\, \int_\Omega u^{p+m-1}\frac{|\nabla v|^2}{v}.
  \]
  Let $s\coloneqq \dfrac{p+\alpha}{\alpha-m+1}=\dfrac{p+\alpha}{\gamma}>1$ and
  $r\coloneqq \dfrac{s}{s-1}=\dfrac{p+\alpha}{p+m-1}$. By the H\"older
  inequality and~\eqref{E:Main-P0},
  \begin{align*}
    \int_\Omega u^{p+m-1}\frac{|\nabla v|^2}{v}
    \le & \left(\int_\Omega u^{p+\alpha}\right)^{\!\frac{s-1}{s}} \left(\int_\Omega \frac{|\nabla v|^{2s}}{v^{s}}\right)^{\!\frac{1}{s}} \\
    \le & M^*(N,s,\mu,\nu)^{1/s}\int_\Omega u^{\gamma s}
    = M^*(N,s,\mu,\nu)^{1/s}\int_\Omega u^{p+\alpha}.
  \end{align*}
  Therefore, we have shown that
  \begin{align}\label{E:thm1-4_Step1-1}
    \beta \int_\Omega u^{p+m-1}\,\frac{|\nabla v|^2}{(1+v)^{\beta+1}}
    \le \Psi_\beta\, M^*(N,s,\mu,\nu)^{1/s}\int_\Omega u^{p+\alpha}.
  \end{align}
  Apply~\eqref{E:u-lp-eq2} with $\alpha = m + \gamma - 1$ to see that
  \begin{align*}
    \int_{\Omega} \frac{u^{p+m -2}}{(1+v)^\beta}\nabla u \cdot \nabla v
    & = -\frac{\mu }{p+m-1}\int_\Omega \frac{u^{p+m-1 } v}{(1+v)^\beta} + \frac{\nu}{p+m-1}\int_\Omega \frac{u^{p+\alpha}}{(1+v)^\beta} \\
    & \quad + \frac{\beta}{p+m-1}\int_\Omega u^{p+m-1}\frac{|\nabla v|^2}{(1+v)^{\beta+1}}                                              \\
    & \le \frac{\nu}{p+m-1}\int_\Omega u^{p+\alpha} + \frac{\beta}{p+m-1}\int_\Omega u^{p+m-1}\frac{|\nabla v|^2}{(1+v)^{\beta+1}}      \\
    & \le \frac{1}{p+m-1}\left(\nu + \Psi_\beta\, M^*(N,s,\mu,\nu)^{1/s} \right)\int_\Omega u^{p+\alpha},
  \end{align*}
  where the last inequality is due to~\eqref{E:thm1-4_Step1-1}. Now for all
  $\beta > 0$ and $q \ge 1$, set
  \begin{equation}\label{E:F}
    F(q)
    \coloneqq
    \chi_0\, \frac{(q-1)}{(q-1) + m}
    \left(\nu + \Psi_\beta\, M^*\!\left(N,\,\frac{q+\alpha}{\alpha-m+1},\,\mu,\,\nu\right)^{\!\frac{\alpha-m+1}{\,q+\alpha\,}}\right).
  \end{equation}
  It then follows from~\eqref{E:u-lp-eq1} that
  \begin{align}\label{E:proof-thm1-4-eq1}
    \frac{1}{p} \cdot \frac{d}{dt} \int_{\Omega} u^p
    & \le -(p-1)\int_{\Omega} u^{p-2} |\nabla u|^2 + a \int_{\Omega} u^p - \left(b- F(p) \right) \int_{\Omega} u^{p+\alpha}.
  \end{align}
  This, together Lemma~\ref{L:global-existence}, implies that
  \begin{equation}\label{E:proof-thm1-4-eq2}
    \limsup_{t\to T_{\max}(u_0)}\int_\Omega u^p(t,\cdot;u_0)<\infty
    \quad \text{provided that}\,\,  b > F(p),
  \end{equation}
  where we note that $\tfrac{p+\alpha}{\alpha-m+1}>1$ for all $p \ge 1$ and $m >
  0$.

  Let $K$ be as in \eqref{E:K}. Note that condition~\eqref{E:cond-chi-eq1} is
  equivalent to
  \[
    b> \chi_0 \frac{\left(\frac{N\alpha}{2}-1\right)_+}{\left(\frac{N\alpha}{2}-1\right)_+ + m} \left(\nu + K \Psi_\beta \right).
  \]
  Assume~\eqref{E:cond-chi-eq1} holds. Then, by the definition of $K$
  in~\eqref{E:K}, there is $p_0>q_* = \max\left\{1, \frac{N\alpha}{2}\right\}$
  such that $b > F(p_0)$. Then by~\eqref{E:proof-thm1-4-eq2} applied to $p =
  p_0$, we have
  \begin{equation}\label{E:proof-thm1-4-eq3}
    \limsup_{t\to T_{\max}(u_0)}\int_\Omega u^{p_0}(t,\cdot;u_0) < \infty.
  \end{equation}
  \medskip

	  \noindent\textbf{Step 2:~} In this step, we prove
	  that~\eqref{E:bounded-finite} holds.

	  Let $p>1$ and $\epsilon>0$ be arbitrary.
	  By the arguments in step 1, there is $C_p>0$ such that
	  \[
	    \chi_0\int_\Omega \frac{u^{p+m-2}}{(1+v)^\beta}\nabla u\cdot\nabla v\le C_p\int_\Omega u^{p+\alpha}\quad \forall\, t\in (0,T_{\max}).
	  \]
	  Hence,~\eqref{E:C-bootstrap} holds with $\rho=\alpha$ and $C_{\epsilon,p}=C_p$.
	  It then follows from Corollary~\ref{C:bootstrap} that, for any $p>1$,
	  \[
	    \limsup_{t\to T_{\max}-}\int_\Omega u^p<\infty.
	  \]
  Then by Proposition \ref{P:Lp->GlobalExist}, \eqref{E:bounded-finite} holds.
\end{proof}

\subsection{Proof of Theorem~\ref{T:strong-logistic-source} (1) under
condition~(iv)}\label{SS:global-exist_6}

In this subsection, we prove Theorem~\ref{T:strong-logistic-source} (1) in the borderline
case when $\alpha = 2m + \gamma - 2$, and $\beta \ge 1/2$.

\begin{proof}[Proof of Theorem~\ref{T:weak-cross-diffusion} (1) under
  condition~(iv)] We divide the proof into two steps. \smallskip

  \noindent\textbf{Step 1.} In this step, we prove that there is
  $p_0>\max\{1,\frac{\alpha N}{2}\}$ such that $ \displaystyle \limsup_{t\to
  T_{\max}-}\int_\Omega u^{p_0}<\infty$. \smallskip

  Starting from~\eqref{E:u-lp-eq1}, apply Young's inequality to the cross term
  to get
  \[
    (p-1)\chi_0\int_\Omega \frac{u^{p+m-2}}{(1+v)^\beta}\nabla u\!\cdot\!\nabla v
    \le (p-1) \int_\Omega u^{p-2}|\nabla u|^2 +
        \frac{(p-1)\chi_0^2}{4} \int_\Omega u^{p+2m-2}\frac{|\nabla v|^2}{(1+v)^{2\beta}}.
  \]
  For the last integral, since $\alpha+2-2m = \gamma$, we can apply the H\"older
  inequality with
  \[
    r = \frac{p+\alpha}{p+2m-2} > 1, \qquad
    s = \frac{p+\alpha}{\alpha-2m+2} = \frac{p+\alpha}{\gamma}, \qquad
    \frac{1}{r}+\frac{1}{s} = 1
  \]
  to obtain
  \begin{align*}
    \int_\Omega u^{p+2m-2}\frac{|\nabla v|^2}{(1+v)^{2\beta}}
    \le & \left(\int_\Omega u^{p + \alpha}\right)^{1/r} \left(\int_\Omega \frac{|\nabla v|^{2s}}{(1+v)^{2\beta s}}\right)^{1/s}                          \\
    \le & \left(\int_\Omega u^{p + \alpha}\right)^{1/r} \Theta_{\beta'}\,\left(M^*(N,s,\mu,\nu)\right)^{1/s} \left(\int_\Omega u^{\gamma s}\right)^{1/s} \\
     =  & \Theta_{\beta'}\,\left(M^*(N,s,\mu,\nu)\right)^{1/s} \int_\Omega u^{p+\alpha},
  \end{align*}
  where we have applied Proposition~\ref{P:Main1} with $\beta' = 2\beta-1 \ge 0$
  (since $\beta \ge 1/2$) and use the fact that $\gamma s = p+\alpha$. Using
  this in~\eqref{E:u-lp-eq1} yields
  \[
    \frac{1}{p}\frac{d}{dt}\int_\Omega u^p
    \le a \int_\Omega u^p - \left[\,b - \frac{(p-1)\chi_0^2}{4}\Theta_{\beta'}\,\left(M^*(N,s,\mu,\nu)\right)^{1/s}\right]\int_\Omega u^{p+\alpha}.
  \]
  Here $\beta' = 2\beta-1$ and $s=\dfrac{p+\alpha}{\gamma}$. Define, for $q\ge 1$,
  \begin{equation}\label{E:F_vi}
   \tilde  F(q)
    \coloneqq \frac{(q-1)_+\,\chi_0^2}{4}\,\Theta_{2\beta-1}\,\left[\,M^*\!\left(N,\, \frac{q+\alpha}{\gamma},\, \mu,\, \nu\right)\,\right]^{\!\frac{\gamma}{q+\alpha}}.
  \end{equation}
  Then
  \begin{align}\label{E:proof-thm1-6-eq1}
    \frac{1}{p} \; \frac{d}{dt} \int_{\Omega} u^p
    \le a \int_{\Omega} u^p - \big(b- \tilde{F}(p)\big) \int_{\Omega} u^{p+\alpha}.
  \end{align}
  By Lemma~\ref{L:global-existence}, this implies
  \begin{equation}\label{E:proof-thm1-6-eq2}
    \limsup_{t\to T_{\max}(u_0)} \int_\Omega u^p(t,\cdot;u_0)<\infty
    \quad \text{provided that}\,\, b > \tilde{F}(p),
  \end{equation}
  where we note that $\tfrac{p+\alpha}{\gamma}>1$ for all $p \ge 1$.

  Let $K$ be as in~\eqref{E:K}. Under the condition~\eqref{E:cond-chi-eq2},
  there exists $p_0>q_* = \max\left\{1, \frac{N\alpha}{2}\right\}$ such that $b
  > \tilde{F}(p_0)$. Hence, by~\eqref{E:proof-thm1-6-eq2} with $p = p_0$,
  \begin{equation}\label{E:proof-thm1-6-eq3}
    \limsup_{t\to T_{\max}(u_0)}\int_\Omega u^{p_0}(t,\cdot;u_0) < \infty.
  \end{equation}
  \medskip

  \noindent\textbf{Step 2.} In this step, we prove that \eqref{E:bounded-finite}
  holds. \smallskip

  Let $p>1$ and $\epsilon>0$ be arbitrary. By Young's inequality and the
  arguments in step~1, there is $C_{\epsilon,p}>0$ such that
  \[
    \chi_0\int_\Omega \frac{u^{p+m-2}}{(1+v)^\beta}\nabla u\cdot\nabla v\le 
    \epsilon\int_\Omega u^{p-2}|\nabla u|^2+C_{\epsilon,p} \int_\Omega u^{p+\alpha}\quad \forall\, t\in (0,T_{\max}).
  \]
  It then follows from Corollary~\ref{C:bootstrap} that, for any $p>1$,
  \[
    \limsup_{t\to T_{\max}-}\int_\Omega u^p<\infty.
  \]
  Then by Proposition~\ref{P:Lp->GlobalExist}, \eqref{E:bounded-finite} holds.
\end{proof}

\subsection{Proof of Theorem \ref{T:strong-logistic-source} (2)}\label{SS:global-exist_part2}

In this subsection, we prove Theorem~\ref{T:strong-logistic-source} (2).

\begin{proof}[Proof of Theorem~\ref{T:strong-logistic-source} (2)]
  It follows directly from~\eqref{E:local-infty} and Theorem~\ref{T:strong-logistic-source} (1).
\end{proof}

\section*{Acknowledgments}
\addcontentsline{toc}{section}{Acknowledgments}
Both L.~C. and I.~R. were partially supported by NSF grants DMS-2246850 and
DMS-2443823. L.~C. was also partially supported by a Collaboration Grant for
Mathematicians (\#959981) from the Simons Foundation.

\begingroup
\let\section\subsection  
\addcontentsline{toc}{section}{References}
\bibliographystyle{amsrefs}
\bibliography{All}
\endgroup
\end{document}